\documentclass{article}
\usepackage{CJKutf8}
\usepackage{definitions}

\newcommand{\lambdamax}{\lambda_{\mathrm{max}}}
\newcommand{\lambdamin}{\lambda_{\mathrm{min}}}
\newcommand{\zzref}{z_{\mathrm{ref}}}
\newcommand{\xref}{x_{\mathrm{ref}}}
\newcommand{\yref}{y_{\mathrm{ref}}}

\newcommand{\prox}{\operatorname{prox}}

\usepackage{thmtools}
\usepackage{thm-restate}
\usepackage{hyperref}
\usepackage{tikz}
\usetikzlibrary{
    shapes.geometric,
    arrows.meta,
    positioning,
    fit
}

\tikzset{
  box/.style = {
    rectangle,
    rounded corners,
    minimum width=3.2cm,
    minimum height=1cm,
    text centered,
    draw=black,
    align=center
  },
  arrow_right/.style = {
    thick,
    -Implies,
    double equal sign distance
  },
  arrow_iff/.style = {
    thick,
    Implies-Implies,
    double equal sign distance
  }
}

\title{On the Local Linear Convergence of Operator Splitting Methods for Conic Programming}

\author{Lijun Ding\thanks{University of California San Diego, Department of Mathematics (\texttt{l2ding@ucsd.edu}).} \and Haihao Lu\thanks{MIT, Sloan School of Management (\texttt{haihao@mit.edu}).} \and Jinwen Yang\thanks{University of Chicago, Department of Statistics (\texttt{jinweny@uchicago.edu}).}}
\date{}

\begin{document}

\maketitle

\begin{abstract}
	Operator-splitting methods such as the primal-dual hybrid gradient method (PDHG) and the alternating direction method of multipliers (ADMM) often exhibit linear convergence on conic programs, although general theory guarantees only sublinear rates. We identify two geometric conditions -- strict complementarity and quadratic facial violation -- that explain this local behavior: under these conditions, PDHG and ADMM converge linearly to an optimal solution when initialized sufficiently close to the converging strictly complementary solution. We establish this result through a unified and verifiable primal-dual error-bound framework.
     First, we show that strict complementarity, together with a quadratic facial-violation property of the associated complementary faces, implies uniform quadratic growth of both the primal and dual augmented Lagrangians near a strictly complementary solution.   Second, we prove the local equivalence of three regularity conditions: uniform quadratic growth of the augmented Lagrangians, quadratic growth of a localized smoothed primal-dual gap, and metric subregularity of the saddle-point mapping. This equivalence clarifies the relationship among previously proposed conditions for local linear convergence. Third, using a unified formulation, we give a concise analysis showing that these equivalent conditions yield local linear convergence of PDHG and ADMM. We verify the quadratic facial-violation property for standard polyhedral and symmetric cones, as well as relevant faces of exponential and power cones, and show that it is preserved under Cartesian products. We also obtain an improved local rate using a restarted Halpern scheme.
     Finally, we extend the framework to convex composite optimization through a quadratic subdifferential-violation condition, which generalizes the quadratic facial-violation.
\end{abstract}

\section{Introduction}
Conic programming offers a unified framework for modeling a broad class of convex optimization problems~\cite{Boyd_Vandenberghe_2004}, including 
 linear programming (LP), second-order cone programming (SOCP), and semidefinite programming (SDP) as special cases, and encompassing many formulations that arise across a wide range of applications \cite{bertsimas1997introduction,boyd1994linear,vandenberghe1996semidefinite,lobo1998applications,ding2021simplicity}. Concretely, given two finite Euclidean space $\mathbf{E}$ and $\mathbf{F}$ equipped with inner products, both denoted as $\inprd{\cdot}{\cdot}$, we consider a linear conic program in standard form with decision variable $x\in \mathbf{E}$:
\begin{equation}\label{eq: p} \tag{P}
    \begin{aligned}
        \min & \  \inprd{c}{x}
        \quad \mathrm{s.t.}\ \ Ax=b,\quad x\in \mathcal{K} \ , 
    \end{aligned}
\end{equation}
where the problem data consists of a linear map $A:\mathbf{E}\rightarrow \mathbf{F}$, a right-hand-side vector $b\in \mathbf{F}$, a cost vector $c\in\mathbf{E}$ and a convex and closed cone $\mathcal K$. Our result works for any cone, and we are particularly interested in cones and their dual cones in Table \ref{tab:cones}, or their finite Cartesian product.
\begin{table}[ht!]
\centering
{
\begin{tabular}{l l l l l}
\toprule
\textbf{Cone} & \textbf{Notation} & \textbf{Dual Cone} 
& \textbf{Space} $\mathbf{E}$ & \textbf{Inner Product} $\inprd{\cdot}{\cdot}$ 
\\
\midrule
Zero cone & $\{0\}^d$ & $\mathbb{R}^d$ & $\mathbb{R}^d$ & dot product \\
Nonnegative cone & $\mathbb{R}_+^d$ & Self-dual 
& $\mathbb{R}^d$ & dot product \\
Second order cone & $\mathcal K_{\text{SOC}}^d$ & Self-dual & $\mathbb{R}^d$ & dot product\\
Positive semidefinite cone & $\mathbb{S}_+^d$ & Self-dual & $\mathbb{S}^d$ & trace product\\
Exponential cone & $\mathcal K_{\text{Exp}}^3$ & $\mathcal K_{\operatorname{D-Exp}}^3$ & 
$\mathbb{R}^3$ & dot product
\\
Power cone & $\mathcal K_{\text{Pow}}^3$ & $\mathcal K_{\operatorname{D-Pow}}^3$  & 
$\mathbb{R}^3$ & dot product
\\
\bottomrule
\end{tabular}}
\caption{\small Common cones and their duals. The mathematical definitions of $\mathcal K_{\text{SOC}}^d$, $\mathcal K_{\text{Exp}}^3$, $\mathcal K_{\text{Pow}}^3$ and their duals are as follows: $\mathcal K_{\text{SOC}}^d := \{(t, x) \in \mathbb{R}^{d} : \|x\|_2 \le t\}$, $\mathcal K_{\text{Exp}}^3 := \cl(\{(u,v,w) \in \mathbb{R}^3 : v \exp(\frac{u}{v}) \le w, v > 0\})$, $\mathcal K_{\operatorname{D-Exp}}^3:=
\cl(\{(u,v,w)\in\mathbb R^3:-u\exp\pran{\tfrac{v}{u}}\leq ew, u<0\})$, $\mathcal K_{\text{Pow}}^3 := \{(u,v,w) \in \mathbb{R}^3 : u^p v^{1-p} \ge |w|,\;u,v\geq 0\}$ and $\mathcal K_{\operatorname{D-Pow}}^3:=\{(u,v,w)\in\mathbb R^3:({\tfrac{u}{p}})^p({\tfrac{v}{1-p}})^{1-p}\geq |w|,\;u,v\geq 0\}$ with $p\in (0,1)$. Here, $\cl(\cdot)$ denotes the closure operator. We denote $\mathbb{S}^d\subset \mathbb{R}^{d\times d}$ the set of symmetric matrices in $\mathbb{R}^{d\times d}$. The dot product for two vectors $x,y \in \mathbb{R}^d$ is $x^\top y =\sum_{i}x_iy_i$. The trace product of two matrices $A,B\in \mathbb{R}^{d\times d}$ is $\mathrm{tr}(A^{\top} B) = \sum_{ij} A_{ij} B_{ij}$.}
\label{tab:cones}
\end{table}
\\
The dual problem of \eqref{eq: p} is:
\begin{equation}\label{eq: d} \tag{D}
    \begin{aligned}
        \max_{y\in\mathbf{F},s\in \mathcal K^*} & \ -\inprd{b}{y} \quad
        \mathrm{s.t.}\ \ c+A^{\top} y=s \ , 
    \end{aligned}
\end{equation}
where $A^{\top}$ is the adjoint of $A$, and $\mathcal{K}^*$ is the dual cone of $\mathcal{K}$ with $\mathcal{K}^* = \{y\mid \inprd{y}{x}\geq 0, \;\text{for all}\;x\in \mathcal{K}\}$. 

To solve~\eqref{eq: p}, a wide range of solvers has been developed. Among them, interior-point methods (IPMs) have long been the dominant paradigm, owing to their robustness and strong convergence guarantees; they are implemented in widely used solvers such as MOSEK~\cite{andersen2003implementing,dahl2022primal} and Clarabel~\cite{goulart2024clarabel,chen2024cuclarabel}. More recently, advances in first-order methods (FOMs) have produced scalable alternatives to IPM-based solvers. Prominent examples include SCS~\cite{o2016conic,o2021operator}, COSMO~\cite{garstka2021cosmo}, and PDCS~\cite{lin2025pdcs}. These solvers rely on operator-splitting techniques applied to primal-dual reformulations of~\eqref{eq: p}: SCS and COSMO are based on variants of the alternating direction method of multipliers (ADMM)~\cite{gabay1976dual,glowinski1975approximation}, whereas PDCS is based on the primal-dual hybrid gradient method (PDHG)~\cite{zhu2008efficient,chambolle2011first}. In particular, these methods can be viewed as solving the saddle-point formulation (here, the indicator function $\iota_\mathcal{K}(x) = 0$ if $x\in \mathcal{K}$ and is $+\infty$ otherwise)
\begin{equation}\label{eq: pd} \tag{P-D}
    \min_x\max_y\; L(x,y):= \inprd{c}{x} +\iota_{\mathcal K}(x)+ \inprd{Ax-b}{y}\ .
\end{equation} 

These operator-splitting technique based solvers typically exhibit strong practical performance in finding medium- to high- accuracy solution, due to \emph{local linear convergence} behavior observed in practice: once the iterate enter some neighborhood of the solution, the convergence rate becomes linear  \cite{kang2025local,jiang2026local}. However, under merely standard feasibility and Slater's type assumptions, most operator-splitting methods, including ADMM and PDHG, can only achieve sublinear convergence rates, typically $O(1/k)$ in the iteration count $k$, for measures such as the primal-dual gap and primal-dual infeasibility~\cite{he20121,davis2017faster,chambolle2011first,chambolle2016ergodic,lu2023unified}. 

To show linear convergence for operator-splitting methods, various stronger conditions has been proposed in the literature. One prominent condition in monotone operator theory is the metric subregularity of certain subdifferential, an error bound type condition for saddle point formulations like \eqref{eq: pd}. It is well-known that metric subregularity implies local linear convergence for many splitting schemes. Another condition proposed for local linear convergence in the literature is the so-called quadratic growth of the smoothed gap \cite{tran2018smooth,fercoq2022quadratic}, which has been argued as broadly applicable as the metric subregularity condition. However, in conic programming, except for linear programming, verifying these properties is highly nontrivial,  due to the geometry of the feasible region boundary, the curvature of the cone, and the coupling between primal feasibility, dual feasibility, and complementarity. 
 
Specialized to semidefinite programming, a representative case of conic programming (covering LP and SOCP as special cases), a recent remarkable result in~\cite {kang2025local} shows that ADMM achieves local linear convergence for SDP when the algorithm converges to an optimal solution that satisfies \emph{strict complementarity} (Definition \ref{def: scomp}), a regularity condition.  A later follow-up work \cite{jiang2026local} demonstrates the same holds for PDHG. However, the analysis in both work requires a rather intricate and detailed investigation of the semidefinite cone. Thus, a broader theory for general conic programs and related splitting schemes remains largely missing. 
 
Facing the insufficiency of existing theory in explaining the practical performance of operator splitting methods for conic programs, this work aims to advance the theoretical understanding by addressing the following question:
\begin{center}
\textit{Can we build a coherent, modularized, and unified framework for proving the local linear convergence? 
In particular, can we explain the relationship between the past proposed regularities, growth, and error bounds?}
\end{center}

\paragraph{Our contribution} Our answer is affirmative. Specifically, our main contributions are as follows:
\begin{itemize}
    \item \textbf{Identification of two geometric conditions for local linear convergence.} We identify two geometric conditions enable local linear convergence of operator splitting methods for conic programs: strict complementarity (Definition \ref{def: scomp}) and quadratic facial violation (Definition \ref{def: qfv}). As we discuss in detail in Remark \ref{rem: scomp} and  \ref{rem: qfv},  and Proposition \ref{prop:qfv_standard_cones}, they hold in a wide range of settings. In particular, strict complementarity (SC) holds unconditionally for LP, common generic settings \cite{dur2017genericity}, and many more structural applications \cite{ding2021simplicity} while quadratic facial violation (QFV) holds unconditionally for symmetric cones (including the first four cones in Table \ref{tab:cones}), and other cones under SC or generic assumptions. Our concrete theorem as a result of SC and QFV is the following:
    \begin{thm*}[Informal] 
        Consider the conic program~\eqref{eq: p} and its dual~\eqref{eq: d}, and suppose they admit a strictly complementary primal-dual solution and QFV. If PDHG or ADMM is initialized sufficiently close to (or converges to) this solution, then its iterates converge linearly to an optimal solution.
    \end{thm*}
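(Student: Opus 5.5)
The plan follows the three-stage pipeline announced in the abstract, passing from geometry to growth, from growth to an error bound, and from the error bound to linear convergence. Fix a strictly complementary primal-dual solution $(x^\star,y^\star,s^\star)$. Let $F_P$ be the minimal face of $\mathcal K$ containing $x^\star$ and $F_D$ the minimal face of $\mathcal K^*$ containing $s^\star$; strict complementarity says $x^\star\in\operatorname{relint}F_P$, $s^\star\in\operatorname{relint}F_D$, and $F_D=F_P^{\perp}\cap\mathcal K^*$.

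\textbf{Step 1: quadratic growth of the augmented Lagrangians.} Let $x\in\mathcal K$ be near $x^\star$. Split the primal optimality gap as
\[
\inprd{c}{x}-\inprd{c}{x^\star}=\inprd{s^\star}{x}-\inprd{y^\star}{Ax-b}.
\]
The second term is absorbed by the penalty $\tfrac{\rho}{2}\|Ax-b\|^2$ of the augmented Lagrangian. For the first term, $s^\star\in\operatorname{relint}F_D$ gives $\inprd{s^\star}{x}\ge \kappa\,\operatorname{dist}(x,F_D^{\triangle})$, where $F_D^{\triangle}=\{s^\star\}^\perp\cap\mathcal K=F_P$. This linear growth holds only up to the curvature of $\mathcal K$, and quadratic facial violation (Definition~\ref{def: qfv}) is exactly what repairs this: it bounds the distance to the face by the linear violation plus a quadratic term. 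Next I would combine $\operatorname{dist}(x,F_P)$ with $\|Ax-b\|$ through Hoffman-type error bounds for the intersection $F_P\cap\{Ax=b\}$. This set is the primal solution set locally, because strict complementarity makes the optimal set equal to $F_P\cap\{Ax=b\}$. Since $F_P$ may not be polyhedral, the two-set error bound must hold near $x^\star$. I expect it to follow from $x^\star\in\operatorname{relint}F_P$, using the fact that the affine hull of $F_P$ intersects $\{Ax=b\}$ and the face is locally the affine hull. Putting these together gives $\mathcal L_\rho(x,y^\star)-p^\star\ge \mu\,\operatorname{dist}(x,X^\star)^2$ locally. The dual argument is symmetric, with the roles of $F_P$ and $F_D$ swapped. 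The constants must be uniform over $(y,x)$ near the reference point, which is why uniformity in a neighborhood of the solution pair is tracked.

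\textbf{Step 2: equivalence with metric subregularity.} Next I would show that uniform quadratic growth of the two augmented Lagrangians implies quadratic growth of a localized smoothed gap. This follows by summing the primal and dual growth inequalities and using weak duality. Quadratic growth of the smoothed gap then implies metric subregularity of the saddle operator $T(x,y)=(c+A^\top y+N_{\mathcal K}(x),\,b-Ax)$. The argument takes $(u,v)\in T(z)$, bounds the smoothed gap above by $\|(u,v)\|\cdot\operatorname{dist}(z,Z^\star)$ via convexity, and divides by $\operatorname{dist}(z,Z^\star)$. The main theorem needs only this chain of implications, not the converse directions.

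\textbf{Step 3: linear convergence.} I would write PDHG and ADMM as $z^{k+1}=z^k-\text{(resolvent step)}$ with $P(z^k-z^{k+1})\in T(z^{k+1})$ for a positive (semi)definite preconditioner $P$. For ADMM this means working in the Douglas--Rachford variable. Firm nonexpansiveness in the $P$-norm gives
\[
\operatorname{dist}_P(z^{k+1},Z^\star)^2\le \operatorname{dist}_P(z^k,Z^\star)^2-\|z^k-z^{k+1}\|_P^2,
\]
and Fejér monotonicity keeps the iterates inside the neighborhood once they are close enough. Metric subregularity at $z^{k+1}$ gives $\operatorname{dist}(z^{k+1},Z^\star)\le \kappa\|P\|\,\|z^k-z^{k+1}\|$, which combined with the previous display yields a contraction factor $(1+c)^{-1}$. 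Linear convergence of the distance then implies that the sequence is Cauchy with geometric tail, so it converges R-linearly to a single optimal point.

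I expect Step 1 to be the main obstacle, in particular the local error bound that combines the face constraint with the affine constraint, and the uniformity of the constants over the neighborhood. The step I would try first is to reduce it to a Hoffman bound on $\operatorname{aff}F_P\cap\{Ax=b\}$, using relative interiority of $x^\star$ to pass between the face and its affine hull.
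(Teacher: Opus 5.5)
You follow the paper's pipeline: strict complementarity and QFV give uniform growth of the reduced augmented Lagrangians, then growth of the localized smoothed gap, then metric subregularity, then a linear rate from $P(z^k-z^{k+1})\in\mathcal F(z^{k+1})$. Two of your local choices are sound alternatives. Projecting onto $\operatorname{span}F_P\cap\{Ax=b\}$ and using $x^\star_0\in\operatorname{relint}F_P$ to land back in $F_P$ is a valid elementary replacement for the bounded linear regularity theorem the paper invokes. Your one-step Fej\'er contraction replaces the paper's block argument with the same order of dependence on $\lambda_{\max}(P)/\gamma$.

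The genuine gap is the uniformity you flag but never supply, and your own Step 2 forces it. The smoothed gap must be taken at the moving reference $\tilde z^\star=\operatorname{proj}_{\mathcal Z_\star}(z)$. With the fixed reference $z^\star_0$, the convexity estimate only gives $\operatorname{dist}^2(z,\mathcal Z_\star)\le C\bigl(\|w\|\,\|z-z^\star_0\|+\|w\|^2\bigr)$ for $w\in\mathcal F(z)$, which is a square-root error bound, not metric subregularity. The primal gap at an optimal $y^\star$ is $\langle s^\star,x\rangle+\tfrac{1}{2\beta}\|Ax-b\|^2$ with $s^\star=c+A^\top y^\star$. So Step 1 must deliver $\operatorname{dist}^2(x,F_P)\le\kappa\langle s^\star,x\rangle$ with a single $\kappa$, for all $x\in\mathcal K$ near $x^\star_0$ and all optimal slacks $s^\star$ near $s^\star_0$, plus the mirror bound in $x^\star$ on the dual side. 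QFV alone does not give this. It is stated relative to one defining vector, with a constant depending on that vector (Remark~\ref{rem: qfv_issue}). Applying it separately at each $s^\star$ gives no control over how the constants vary, so compactness of the neighborhood does not help.

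The missing idea is the comparability argument of Lemma~\ref{lem: inner_p_bound_diff_x_in_rel_face_conic}:
\begin{itemize}
\item Every optimal slack is complementary to $x^\star_0$, hence lies in $F_D$.
\item Pick $\delta>0$ with $B_\delta(s^\star_0)\cap\operatorname{span}F_D\subset F_D$. Assume $s^\star_0\neq0$; otherwise QFV already forces $\operatorname{dist}(x,F_P)=0$ on $\mathcal K$.
\item For $\|s^\star-s^\star_0\|\le\delta/2$ and $\theta=\delta/(2\|s^\star_0\|)$, the point $s^\star-\theta s^\star_0=s^\star_0+\bigl((s^\star-s^\star_0)-\theta s^\star_0\bigr)$ lies in $F_D\subset\mathcal K^*$.
\item Hence $\langle s^\star,x\rangle\ge\theta\langle s^\star_0,x\rangle$ for all $x\in\mathcal K$, and QFV relative to $s^\star_0$ yields the uniform constant $\kappa/\theta$.
\end{itemize}

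Several further corrections:
\begin{itemize}
\item The term $-\langle y^\star,Ax-b\rangle$ is not absorbed by the penalty; a linear term cannot be dominated by a quadratic one near feasibility. It cancels exactly against the multiplier term.
\item The bound $\langle s^\star,x\rangle\ge\kappa\operatorname{dist}(x,F_P)$ is false for curved cones such as the second-order cone. QFV is precisely $\operatorname{dist}^2(x,F_P)\le\kappa\langle s^\star_0,x\rangle$ on compact sets, not a linear-plus-quadratic bound.
\item In Step 2 the convexity estimate is $G_\beta(z;\tilde z^\star)\le\|w\|\operatorname{dist}(z,\mathcal Z_\star)+\tfrac{1}{2\beta}\|w\|^2$. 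You must solve a quadratic inequality rather than divide. Your ``summing'' step also relies on the identity of Proposition~\ref{prop: gap-computation}.
\item For ADMM, $P$ is singular, so bound $\|P(z^k-z^{k+1})\|\le\lambda_{\max}(P)^{1/2}\|z^k-z^{k+1}\|_P$ rather than using the Euclidean step length.
\item Localization through the Douglas--Rachford variable recovers $y^k$ only when $A$ is surjective (Assumption~\ref{ass:full-rank}).
\end{itemize}
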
   
    These results show that, despite their worst-case global sublinear guarantees, operator splitting methods can converge much faster locally under strict complementarity and QFV. This provides a theoretical explanation for their strong empirical performance in conic programming, and covers previous local linear convergence results on SDP~\cite{kang2025local,jiang2026local} as a special case. We also discuss acceleration via restart and Halpern schemes in the same setting. 

\item \textbf{A verifiable primal-dual error-bound framework for establishing local linear convergence.} 
Apart from the concrete theorem, a central contribution of this paper is to provide a concrete way to verify the error bound or growth conditions that drive local linear convergence of operator splitting methods. 
Our analysis bridges this gap by deriving the needed conditions from strict complementarity (SC) and the local geometry of the cone, in particular, the quadratic facial-violation property (QFV). Our framework has three components. \textit{(I) Implication of SC and QFV:} In Proposition \ref{prop:uqg}, we show that, SC and QFV implies uniform quadratic growth of the primal and dual augmented Lagrangian functions. This converts a classical conic regularity condition into quantitative primal and dual growth estimates, using tools from variational analysis and conic optimization~\cite{chan2008constraint,cui2016asymptotic,cui2019r,ding2023strict}, including bounded linear regularity~\cite{bauschke1999strong,zhang2000global}. \textit{(II) Equivalence of three conditions:} In Section \ref{sec:equiv}, we introduce an extended smoothed primal-dual gap, building on~\cite{tran2018smooth,fercoq2022quadratic}, that separates the primal and dual augmented-Lagrangian contributions, and in Theorem \ref{thm:equivalence}, we prove that uniform quadratic growth of the augmented Lagrangian functions, quadratic growth of the extended smoothed gap, and metric subregularity~\cite{dontchev2009implicit} are locally equivalent. Note that this result is independent of strict complementarity and QFV, and clarifies the relationships of the two conditions proposed in the literature for local linear convergeence. \textit{(III) A unified form for analyzing local linear convergence:} By revisiting a unified operator form \cite{lu2023unified}, in Proposition \ref{prop:local}, we provide a short proof that metric subregularity yields local linear convergence of PDHG and ADMM. Together, these results provide a verifiable path from strict complementarity and cone geometry to local linear convergence, as well as a clarfication of relationship between past proposed conditions; see Figure~\ref{fig:proof} for a summary of the framework.

    \begin{figure}[ht!]
        \centering
        \begin{tikzpicture}[node distance = 0.6cm]
    
        \node (start) [box] {Strict complementarity and quadratic facial/subdifferential violation\\ of \eqref{eq: p} and \eqref{eq: d}};
        \node (qgal) [box, below=1cm of start] {Uniform QG of AL functions\\ with \eqref{eq: p} and \eqref{eq: d}};
        \node (phl) [box, draw=none, left=-1.8cm of qgal]{};
        \node (phr) [box, draw=none, right=-1.8cm of qgal]{};
        \node (qgsg) [box, below=of phl] {QG of smoothed gap \\ with \eqref{eq: pd}};
        \node (ms) [box, below=of phr] {Metric subregularity \\ with \eqref{eq: pd}};
    
        \node (group) [draw, rounded corners, inner sep=10pt, fit=(qgal) (qgsg) (ms)] {};
    
        \node (linear) [box, below=0.7cm of group] {Local linear convergence};
    
       \draw [arrow_right] (start) -- 
       node[midway, right, align=left, font=\small] {Proposition \ref{prop:uqg}}
       (group);
       
       \draw [arrow_iff] (qgal) -- (qgsg);
       \draw [arrow_iff] (qgsg) -- (ms); 
       \draw [arrow_iff] (qgal) -- (ms);
       
       \node[fill=white, inner sep=2pt, align=center, font=\small, yshift=0.2cm]
       at (barycentric cs:qgal=1,qgsg=1,ms=1)
       {Theorem \ref{thm:equivalence}};
       
       \draw [arrow_right] (group) -- 
       node[midway, right, align=left, font=\small] {Proposition \ref{prop:local}}
       (linear);
        \end{tikzpicture}
        \caption{A framework for proving linear convergence of operator splitting methods}
        \label{fig:proof}
    \end{figure}

    \item \textbf{Beyond conic programming: convex composite programs.}
Our analysis is actually not tied to the conic programs alone. We further show that the same primal-dual mechanism extends to a class of convex composite programs by proper generalization of the two geometric conditions. The generalized strict complementarity and the generalized QFV, called quadratic subdifferential violation (QSV) again yield the local quadratic-growth and metric-subregularity properties needed for linear convergence. This extension indicates that the augmented-Lagrangian and smoothed-gap viewpoint provides a general template for proving local linear convergence of splitting methods beyond classical conic optimization.
\end{itemize}

\paragraph{Paper organization} The rest of the paper is organized as follows. In Section \ref{sec: prelim}, we introduce the basic notations and notations such as strict complementarity and QFV. In Section \ref{sec: uniformQG}, we derive that strict complementarity and QFV implies uniform quadratic growth of both the primal and dual augmented Lagrangians near a strictly complementary solution. We also prove the QFV holds for the faces of the cone of interests in common settings. 
In Section \ref{sec:equiv}, we prove the local equivalence of the three regularity conditions: uniform quadratic growth of the augmented Lagrangians, quadratic growth of a localized smoothed primal-dual gap, and metric subregularity of the saddle-point mapping. In Section \ref{sec:linear}, we revisit a unified operator form of operator splitting method, and prove the local linear convergence of popular operator splitting methods under  the three conditions. A restarted Halpern variant further improves the dependence on the local condition measure from quadratic to linear is also presented. In Section \ref{sec: generalization}, we generalize our framework to convex composite programming. We conclude the paper in Section \ref{sec: conclusion}.

After the completion of the manuscript, we also notice a concurrent independent work \cite{li2026gpu} that presents the local linear convergence of restarted PDHG for conic quadratic programming under strict complementarity via directly verifying the quadratic growth of the smoothed gap.

\section{Preliminaries}\label{sec: prelim}
In this section, we introduce the structural conditions: strict complementarity and quadratic facial violation; the analytical object: augmented Lagrangian; the algorithms of interest: PDHG and ADMM; and the notations extensively used in the paper. We start with the concept of strict complementarity.

{\bf Basic notions and notations.}
Throughout the paper, denote $z=(x,y)$ the primal-dual pair, $\mathcal Z= \mathbf{E} \times \mathbf{F}$ the domain of $z$, and $\mathcal Z_\star= \mathcal {X}_\star \times \mathcal {Y}_\star$ the set of the Cartesian product of the set of the primal optimal solutions and the set of dual optimal solutions. We denote $\mathcal{S}_\star = \{c+A^{\top} y \mid y \in \mathcal{Y}_\star\}$ the set of dual slack optimal solutions.  For a set $C$, we denote $\iota_C$ to be indicator function of $C$, i.e., $\iota_C(x) = 0$ if $x\in C$ and $+\infty$ otherwise. We denote by $\|\cdot\|$ the $\ell_2$ norm induced by the dot product of the underlying space, and by $ B_r (x) $ the closed ball with center $x$ and radius $r$, defined as $B_r(x) = \{y \mid \|y-x\|\leq r \}$.

\paragraph{Strict complementarity} 
To motivate strict complementarity, we recall the concepts of exposed faces in convex geometry. Given a convex and closed cone $\mathcal{K}$, an exposed face $\face_s$ defined by (or exposed by) an $s\in \mathcal{K}^*$ is the intersection of the cone and the hyperplane $H_s=\{x\mid \inprd{}{x}{s}=0\}$: 
\begin{equation}
    \face_s = \mathcal{K}\cap H_s.
\end{equation}
Similarly, we denote $\face_x^*$ to be an exposed face of the dual cone $\mathcal{K}^*$ defined by an $x\in \mathcal{K}$. To link this concept to the optimality conditions of \eqref{eq: p} and \eqref{eq: d}, recall that the complementarity condition of \eqref{eq: p} and \eqref{eq: d} for any optimal primal dual pair  $(\xsol,\ysol)$ with the dual optimal slack $\ssol = c+A^{\top} \ysol$ is
\begin{equation}\label{eq: cp}
\inprd{\xsol}{\ssol} = 0
\end{equation}
Hence, the complementarity condition combined with the conic condition that $\xsol \in \mathcal{K}, \ssol \in \mathcal{K}^*$ can be expressed as 
\begin{equation}
    \xsol \in \face_{\ssol} \quad \text{and}\quad 
    \ssol \in \face^*_{\xsol}.
\end{equation}
The strict complementarity condition requires a stronger inclusion: the $(\xsol,\ssol)$ lives in the relative interior of $\face_{\ssol}\times \face^*_{\xsol}$.
    
\begin{mydef}[Strict complementarity]\label{def: scomp}
The primal and dual programs, \eqref{eq: p} and \eqref{eq: d}, satisfies strict complementarity if there is an optimal primal-dual pair $(\xsol,\ysol)$ of   \eqref{eq: p} and \eqref{eq: d} with slack $\ssol = c+A^{\top} \ysol$ such that 
\begin{subequations}\label{eq: sc_conic}
    \begin{align}
        \xsol \in \rel(\face_{\ssol}) \label{eq: dual_sc_conic} \\ 
        \ssol \in \rel(\face^*_{\xsol}) \label{eq: p_sc_conic} .
    \end{align}
\end{subequations}
Given two faces $\mathcal{C}_s\subset \mathcal{K}$ and $\mathcal{C}_x \subset \mathcal{K}^*$ satisfying $\inprd{x}{s}=0$, we say they satisfies strict complementarity if \eqref{eq: sc_conic} is satisfied with $\xsol =x$ and $\ssol =s$.
\end{mydef}

\begin{rem}[Presence of strict complementarity]\label{rem: scomp}
Strict complementarity holds generically in the sense of \cite{dur2017genericity} regardless of the cone. It holds automatically for the cone of the nonnegative orthants \cite{goldman1956theory}. Though it may not always hold for other cones \cite{alizadeh1997complementarity}, it does hold in many structural instances \cite{ding2021simplicity,ding2024sharpnesswellconditioningnonsmoothconvex}. 

We note it is not always the case that \eqref{eq: dual_sc_conic} implies  \eqref{eq: p_sc_conic} and vice versa, unless the cone is facially exposed  \cite[Remark 3.3.2]{pataki2000geometry} and \cite[Remark 4.10]{dur2017genericity}. Fortunately, all symmetric cones are facially exposed. Hence, the two conditions are equivalent for them.
\end{rem}

\paragraph{Quadratic facial violation} The following condition controls the distance of a point in a cone to an exposed face of the cone via violations of the defining equations, a form of error bound.
\begin{mydef}[Quadratic facial violation (QFV)]\label{def: qfv}
Given a vector $s\in \mathcal{K}^*$, the  exposed face $\face_s$ of a convex closed cone $\mathcal{K}$  defined by $s\in \mathcal{K}^*$ admits quadratic facial violation relative to $s$ if the following inequality holds: for any compact set $B$, there is a $\kappa > 0$ such that for any $x\in B\cap \mathcal{K}$,
\begin{equation}\label{eq: qfv_inequality}
    \dist^2(x,\face_s)\leq \kappa \inprd{s}{x}.
\end{equation}
We say an exposed face satisfies QFV if the the face satisfies QFV relative to any defining vector. We also say complementary quadratic facial violation holds for faces $\face_s \subset\mathcal{K}$ and $\face_x^* \subset \mathcal{K}$ if the defining vectors satisfy $\inprd{x}{s}=0$ and both faces admit quadratic facial violation.
\end{mydef}
\begin{rem} [Presence of quadratic facial violation]\label{rem: qfv}
    As we verified in Section \ref{sec: Pqfv}, the quadratic facial violation condition is satisfied by all symmetric cones and the trivial cones unconditionally, the exponential cone under strict complementarity, and the power cone in a generic sense, thanks to the seminal work \cite{sturm2000error}, and a series of works on facial reduction and error bounds \cite{lourencco2021amenable,ding2023strict,lindstrom2023error,lin2024generalized}.
\end{rem}
\begin{rem}[Constant $\kappa$ dependence]\label{rem: qfv_issue}
    Note that the constant $\kappa$ in the above definition depends not only on the compact set $B$, but also on the particular defining vector $s$. In fact, if we change the defining vector of an exposed $\mathcal{C}_s$, then quadratic facial violation may not hold, e.g., \cite[Lemma 4.9]{lindstrom2023error} for the exponential cone. 
\end{rem}

\paragraph{Augmented Lagrangian} Recall the following primal and dual lagrangian of \eqref{eq: p} and \eqref{eq: d} with an optimal multiplier $(\xsol,\ysol)$:
\begin{equation}\label{eq: AL_conic}
    \begin{aligned}
        &L_{\beta}^P(x,u,y):=c^{\top}x+\iota_{\mathcal K}(x)+\iota_{\{b\}}(u)+\langle y, Ax-u\rangle+\frac{1}{2\beta}\|Ax-u\|_2^2\\
        &L_{\beta}^D(y,v,x):= \inprd{b}{y} +\iota_{\mathcal K^*}(c-v)+\langle x, -A^{\top} y-v\rangle+\frac{1}{2\beta}\|A^{\top} y+v\|_2^2 \ .
    \end{aligned}
\end{equation}
Here, the variable $v$ comes from adding the constraint $v = -A^{\top} y$ to \eqref{eq: d}.

\section{Uniform quadratic growth of augmented Lagrangian in conic programming and presence of QFV}\label{sec: uniformQG}

In this section, we establish a uniform quadratic growth property for the primal and dual augmented Lagrangians of conic programs (Proposition \ref{prop:uqg}). Under strict complementarity and the quadratic facial violation condition, we show that the augmented Lagrangian gap controls the squared distance to the corresponding primal and dual optimal solution sets, with constants that are uniform over all optimal primal-dual pairs in a sufficiently small neighborhood of a reference strictly complementary pair. This uniformity is the main technical point of the section and will be crucial for the local convergence analysis in the sequel. We then verify the quadratic facial violation condition for several cones of practical interest, including trivial cones, nonnegative cones, second-order cones, semidefinite cones, and selected exponential and power cone faces in Section \ref{sec: Pqfv}. Consequently, for these cones, strict complementarity alone implies the uniform quadratic growth of the augmented Lagrangian established in Proposition \ref{prop:uqg}.

\subsection{Uniform quadratic growth of augmented Lagrangian}
Proposition \ref{prop:uqg} is the main result of this section. It shows that strict complementarity, together with quadratic facial violation of the associated complementary faces, implies a uniform quadratic growth property for both the primal and dual augmented Lagrangians near a reference strictly complementary pair.
\begin{prop}[Uniform quadratic growth]\label{prop:uqg}
Suppose \eqref{eq: p} and \eqref{eq: d} admits a strict complementarity pair ${\zsol}_{0} = ({\xsol}_0,{\ysol}_0)$ and the corresponding complementary faces 
$\face_{{\xsol}_0}$ and $\face_{{\ssol}_0}$, where ${\ssol}_0 = c+A^{\top} {\ysol}_0$, admit quadratic facial violation. Fix $\beta>0$. Then there is an $r>0$, such that for any compact set $B\subset (\mathbb{R}^n)^2 \times (\mathbb{R}^m)^2$, there are constants $\kappa_1,\kappa_2$ so that the following inequality holds for any $(x,v,u,y)\in B$ and  any $\zsol=(\xsol,\ysol)\in \zsset \cap B({\zsol}_0,r)$:
\begin{subequations}\label{eq: AL_diff_conic}
    \begin{align}
    L_{\beta}^P(x,u,\ysol) -  L_{\beta}^P(\xsol,b,\ysol) 
    &\geq \kappa_1 \dist^2((x,u), \mathcal{P}_\star) \label{eq: p_lag_Q_conic}\\ 
      L_{\beta}^D(y,v,\xsol) - L_{\beta}^D(\ysol, -A^{\top} \ysol,\xsol)
    & \geq \kappa _2 \dist^2((y,v),\mathcal{D}_\star), \label{eq: d_Lag_Q_conic} 
\end{align}
\end{subequations}
where $\mathcal{P}_\star = \{(x,Ax)\mid x\in \xsset\} $ and 
$\mathcal{D}_\star = \{(y, -A^{\top} y)\mid y \in \ysset\}.$
\end{prop}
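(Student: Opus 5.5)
The plan is to compute each augmented-Lagrangian gap explicitly, reduce it to a linear conic term plus a squared residual, and then combine QFV with a linear-regularity error bound. The uniformity over $\zsol\in\zsset\cap B({\zsol}_0,r)$ comes from comparing the defining vector $\ssol$ (resp. $\xsol$) with ${\ssol}_0$ (resp. ${\xsol}_0$) in the order of the dual cone.

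\textbf{Step 1 (reduce the gaps).} In \eqref{eq: p_lag_Q_conic} we may assume $u=b$ and $x\in\mathcal K$, since otherwise the left side is $+\infty$. Strong duality gives $\inprd{c}{\xsol}=-\inprd{b}{\ysol}$, and $c+A^{\top}\ysol=\ssol$. Hence
\[
L_{\beta}^P(x,b,\ysol)-L_{\beta}^P(\xsol,b,\ysol)=\inprd{\ssol}{x}+\tfrac{1}{2\beta}\|Ax-b\|^2 .
\]
Moreover $\dist((x,b),\mathcal P_\star)\le \dist(x,\xsset)$ up to a constant depending on $\|A\|$. So it suffices to show $\dist^2(x,\xsset)\le C\bigl(\inprd{\ssol}{x}+\|Ax-b\|^2\bigr)$ on $B\cap\mathcal K$, with $C$ independent of $\zsol$.

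The dual computation is symmetric. We may take $c-v\in\mathcal K^*$, and the gap becomes $\inprd{\xsol}{c-v}+\frac{1}{2\beta}\|A^{\top}y+v\|^2$, after using $\inprd{b}{y}=\inprd{Ay}{\cdot}$-type identities, i.e. $\inprd{b}{y}=\inprd{A\xsol}{y}$ together with $\inprd{\xsol}{\ssol}=0$. The role of $\xsset$ is then played by $\mathcal D_\star$, which is described through the slack $c-v$ lying in $\face^*_{{\xsol}_0}$ and the affine constraint $v=-A^{\top}y$.

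\textbf{Step 2 (uniform comparison of defining vectors).} Every optimal slack $\ssol$ satisfies $\inprd{{\xsol}_0}{\ssol}=0$, so $\ssol\in\face^*_{{\xsol}_0}$. Hence $\ssol-{\ssol}_0$ lies in the span of that face. Since ${\ssol}_0\in\rel(\face^*_{{\xsol}_0})$, there is $\rho>0$ with ${\ssol}_0+w\in\face^*_{{\xsol}_0}$ for all $w$ in that span with $\|w\|\le\rho$. Choose $r$ so that $\|\ssol-{\ssol}_0\|\le\|A\|\,r\le\rho/2$. Then
\[
\ssol-\tfrac12{\ssol}_0=\tfrac12{\ssol}_0+(\ssol-{\ssol}_0)\in\mathcal K^*,
\]
so $\inprd{\ssol}{x}\ge\frac12\inprd{{\ssol}_0}{x}$ for all $x\in\mathcal K$. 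Symmetrically, shrinking $r$ if needed, $\inprd{\xsol}{s}\ge\frac12\inprd{{\xsol}_0}{s}$ for all $s\in\mathcal K^*$. This step is why we only need QFV relative to the fixed vectors ${\ssol}_0,{\xsol}_0$, which sidesteps the dependence of $\kappa$ on the defining vector noted in Remark~\ref{rem: qfv_issue}.

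\textbf{Step 3 (QFV plus bounded linear regularity).} By complementary slackness with ${\ssol}_0$, we have $\xsset=\face_{{\ssol}_0}\cap\{x: Ax=b\}$. Since ${\xsol}_0\in\rel(\face_{{\ssol}_0})\cap\{Ax=b\}$, the convex set $\face_{{\ssol}_0}$ and the affine set satisfy the relative-interior constraint qualification. Bounded linear regularity \cite{bauschke1999strong} then gives, on the bounded set $B$,
\[
\dist(x,\xsset)\le C_B\bigl(\dist(x,\face_{{\ssol}_0})+\|Ax-b\|\bigr).
\]
Here $\dist(x,\{Ax=b\})\lesssim\|Ax-b\|$ by Hoffman's bound. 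QFV gives $\dist^2(x,\face_{{\ssol}_0})\le\kappa\inprd{{\ssol}_0}{x}$, and Step 2 gives $\inprd{{\ssol}_0}{x}\le 2\inprd{\ssol}{x}$. Squaring and combining yields \eqref{eq: p_lag_Q_conic} with $\kappa_1$ depending only on $B$, $\beta$, $A$ and ${\zsol}_0$. The dual bound \eqref{eq: d_Lag_Q_conic} follows identically, using $\face^*_{{\xsol}_0}$, the affine set $\{(y,v): v=-A^{\top}y\}$ shifted by $c$, and ${\ssol}_0\in\rel(\face^*_{{\xsol}_0})$ for the regularity.

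\textbf{Main obstacle.} I expect the main obstacle to be the uniformity in Step 2, since the constants must not depend on which nearby optimal pair is used. The relative-interior argument above is what I would try first. I also need to double-check that the dual optimal set is exactly the intersection of the face with the affine slack constraint in the $(y,v)$ variables, including the case where $A^{\top}$ has a nontrivial kernel, in which case Hoffman's bound is applied in $(y,v)$-space.
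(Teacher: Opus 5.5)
Your proposal is correct and follows essentially the paper's argument. You rewrite the augmented-Lagrangian gap as the complementarity term plus the squared linear residual, invoke bounded linear regularity from the relative-interior condition given by strict complementarity, apply QFV at the fixed reference vector, and get uniformity over nearby optimal pairs from the comparison $\inprd{\ssol}{x}\ge\theta\inprd{{\ssol}_0}{x}$, which is the role of Lemma~\ref{lem: inner_p_bound_diff_x_in_rel_face_conic}. Your cone-scaling step giving $\theta=\tfrac12$, and working directly with the fixed face $\face_{{\ssol}_0}$ instead of first showing the faces do not change, are only minor streamlinings of the same idea.
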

\begin{rem}[Constant dependence and uniformity]
In the proposition statement, the constants $\kappa_1,\kappa_2$ do not depend on the specific choice of the optimizers $(\xsol,\ysol)$. They depend only on the set $B$, the radius $r$, the reference strict complementarity pair ${\zsol}_{0} = ({\xsol}_0,{\ysol}_0)$, the penalty $\beta$, and the problem \eqref{eq: p} and \eqref{eq: d}. Hence, this is a uniform quadratic growth result, which is critical to our future equivalence conditions of metric subregularity with \eqref{eq: pd}.
\end{rem}

Our proof is based on three key ideas. First, the augmented Lagrangian optimality gaps (the left-hand side of \eqref{eq: AL_diff_conic}) can be interpreted as violations of the KKT conditions. Second, under strict complementarity, bounded linear regularity \cite{bauschke1996projection} becomes applicable, allowing us to bound the distance to the optimal solution set by two quantities: the linear constraint violation (one component of the KKT residual) and the distance to the complementarity face, which is closely related to the complementarity violation. Third, the complementarity violation in the KKT conditions controls the distance to the complementarity face through the quadratic facial violation condition. Applying the third idea, however, requires substantially more care than in existing analyses. Unlike prior work on quadratic growth for a single minimization problem \cite{drusvyatskiy2018error,cui2016asymptotic,ding2023strict}, where it suffices to analyze a fixed optimal solution, our setting requires uniform control over an entire set of optimal solutions, which is not handled by quadratic facial violation condition alone as discussed in Remark \ref{rem: qfv_issue}. Addressing this issue constitutes one of the main technical contributions of the paper. In particular, Lemma \ref{lem: inner_p_bound_diff_x_in_rel_face_conic} establishes a uniform bound on the constant in the quadratic facial violation condition across the relevant set of optimal solutions, making the above argument possible.

\begin{proof}[Proof of Proposition \ref{prop:uqg}]
We consider establishing the quadratic growth of the dual augmented Lagrangian, i.e., \eqref{eq: d_Lag_Q_conic}, only. The argument for \eqref{eq: p_lag_Q_conic} is almost identical, and we omit the details. We start with some preparations on the consequences of a small enough $r$.

\paragraph{Maintain complementary face and strict complementarity} We note first that since $({\xsol}_0,{\ssol}_0)$ (where ${\ssol}_0=c+A^{\top} {\ysol}_0$) lies in the relative interior of $\face_{{\xsol}_0} \times \face_{{\ssol}_0}$, by choosing $r$ small enough, we can ensure that the face defined by $(\xsol,\ssol)$ (where $\ssol = c+ A^{\top} {\ysol}$) do not change and it also lies in the relative interior of the face. Specifically, let $r_0$ be the largest radius of a ball $B({\zsol}_0,r_0)$ so that $B_{z_0}({\zsol}_0) \cap \face_{{\zsol}_0} \subset \rel(\face_{{\zsol}_0})$. 
Then, for any $r<r_0$, we have for all $(\xsol,\ysol) \in  \zsset \cap B({\zsol}_0,r)$
\begin{align}
\face_{{\xsol}_0}\times \face_{{\ssol}_0} 
 = 
\face_{{\xsol}}\times \face_{{\ssol}} \quad \text{and}\quad 
(\xsol,\ssol)
 \in \rel(\face_{{\ssol}})\times \rel(\face_{{\xsol}}).\label{eq: relative_interior}
\end{align}

\paragraph{Dual Lagrangian difference as KKT violations} 
We have 
\begin{align}
  L_{\beta}^D(y,v,\xsol) - L_{\beta}^D(\ysol, -A\ysol,\xsol) 
 & = \inprd{b}{y} +\iota_{\mathcal K^*}(c-v)+\langle \xsol, -A^{\top} y-v\rangle+\frac{1}{2\beta}\|A^{\top} y+v\|_2^2 -\inprd{b}{\ysol} \\
 & \overset{(a)}{=} \inprd{A\xsol}{y}+\iota_{\mathcal K^*}(c-v)+\langle \xsol, -A^{\top} y-v\rangle+\frac{1}{2\beta}\|A^{\top} y+v\|_2^2 +c^\top \xsol \\
 & = \inprd{\xsol}{c-v} + \iota_{K^*}(c-v) + \frac{1}{2\beta}\|A^{\top} y+v\|_2^2 \label{eq: L_D_xs_linear_infeas_form_conic}
\end{align}
Here, in the step $(a)$, we use the strong duality $-\inprd{b}{\ysol}  = 
c^\top \xsol$ (implied by strict complementarity) and $A\xsol = b$. If $c-v \not\in \mathcal{K}^*$, then \eqref{eq: d_Lag_Q_conic} trivially holds. Thus, we suppose that  $c-v \in \mathcal{K}^*$ below. Note now $L_{\beta}^D(y,v,\xsol) - L_{\beta}^D(\ysol, -A\ysol,\xsol) $ is a sum of two nonnegative quantities and can be regarded as the violation of $(y,v)$ to the KKT conditions of \eqref{eq: d}.

\paragraph{Bounded linear regularity} Define the set $C = \{(y, s)\mid s = c+A^{\top} y\} \subset \mathbb{R}^m \times \mathbb{R}^n$ and the set $D = \mathbb{R}^m \times \face_{\xsol}$. 
Note that the intersection $C$ and $D$ 
is $C\cap D = \ysset $
gives the set of optimal dual solutions of \eqref{eq: d}. Thanks to strict complementarity and \eqref{eq: relative_interior}, we know that 
\[
(\ysol, \ssol) \in C\cap \rel(D)\not=\emptyset 
\]
Using the above relationship and $C$ is an affine space, we can activate bounded linear regularity \cite[Theorem 4.6]{bauschke1999strong}, which states that 
for any $(y,s)$ in a bounded set $B_2$, there is an constant $c_1>0$ such that
$\dist((y,s),C\cap D) \leq c_1( \dist((y,s),C) + \dist((y,s),D))$.
By recalling the definition of $C$ and $D$, we have that for any compact set $B_2 \subset \mathbb{R}^m \times \mathbb{R}^n$, there is an $\kappa'$ such that 
\begin{equation}\label{eq: blr_dual_side_conic}
\dist((y,s), \ysset \times (c+A^{\top} \ysset) \leq 
\kappa' (\twonorm{A^{\top} y +c -s} + \dist(s,\face_{\xsol})).
\end{equation}

\paragraph{Uniform quadratic facial violation} 
By comparing \eqref{eq: L_D_xs_linear_infeas_form_conic} and \eqref{eq: blr_dual_side_conic}, and with the substituation $s = c-v$, we see that \eqref{eq: d_Lag_Q_conic} holds if the following uniform quadratic facial violation of $\face_{{\xsol}_0} = \face_{\xsol}$ holds: there exists a $r>0$ such that, for any compact $B_3\subset \mathbb{R}^n$, there is a $\kappa>0$ such that the following holds for any $s\in \mathcal{K}^* \cap B_3$ and $\xsol \in B_r(\xsol)$: 
\begin{equation}\label{eq: qfv_dual_conic}
\dist^2(s,\face_{\xsol}) \leq \kappa \inprd{{\xsol}}{s}.
\end{equation}

Fortunately, this is accomplished by Lemma \ref{lem: inner_p_bound_diff_x_in_rel_face_conic} and our proof is complete. 
\end{proof}

\begin{lem}[Comparable inner product and uniform quadratic facial violation]
\label{lem: inner_p_bound_diff_x_in_rel_face_conic}
  Given a closed convex cone $\mathcal{K}$, a face $\face_0$ of $\mathcal{K}$,\footnote{Recall a face $\face$ of a convex cone $\mathcal{K}$ is a subset of $\mathcal{K}$ and satisfies that for any $x,y\in \mathcal{K}$, $x+y \in \face$, we have $x,y\in \face$.} and a point $x_0$ in the relative interior of $\face_0$. Then there is $r_1>0$ such that for any $s\in \mathcal{K}^*$ and $x\in B_{r_1}(x_0)\cap \face_0$, we have 
  \begin{equation}\label{eq: inner_p_bound_diff_x_in_rel_face}
      \inprd{s}{x_1}\geq \theta_1 \inprd{s}{x_0},
  \end{equation}
  where $\theta_1=1$ if $\face_0$ is a singleton or $x_0=0$, and $\theta_1 \geq \frac{r_1}{2\twonorm{x_0}}>0$ if $\face_0$ contains multiple points and $x_0\not=0$. Consequently, suppose a face $\mathcal{C}_{x_0}\subset \mathcal{K}^*$ is exposed by an $x_0$ and satisfies quadratic facial violation with respect to $x_0$. Then the uniform quadratic violation is satisfied, i.e.,
  there exists $r_2>0$ such that, for any compact set $B$, there exists $\kappa>0$ with the property that, for all 
  $x \in B_{r_2}(x_0) \cap \mathcal{C}_0$, the following inequality holds:
  \begin{equation} \label{eq: uniform_qfv}  \dist^2(s,\mathcal{C}_{x_0}) \leq 
  \kappa \inprd{s}{x},\quad \text{for all} \quad s\in \mathcal{K}^* \cap B.
  \end{equation}
\end{lem}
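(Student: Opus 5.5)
Fix $x_0\in\rel(\face_0)$ with $\face_0$ containing more than one point and $x_0\neq 0$; the degenerate cases are trivial. If $\face_0=\{0\}$, then $x=x_0=0$. If $x_0=0$ lies in the relative interior, then the face is the linear subspace $\operatorname{span}\face_0$. For $s\in\mathcal{K}^*$ this forces $\inprd{s}{x}=0$ for all $x\in\face_0$, so $\theta_1=1$ works.

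In the main case I would choose $r_1>0$ so that $B_{2r_1}(x_0)\cap\operatorname{aff}(\face_0)\subset\face_0$, which is possible because $x_0\in\rel(\face_0)$. I would also take $r_1<\twonorm{x_0}$. The key construction is a reflection through $x$: for $x\in B_{r_1}(x_0)\cap\face_0$, write
\[
x=\theta x_0+(1-\theta)w,\qquad w:=x_0+\tfrac{1}{1-\theta}(x-x_0)\cdot(\text{scaling}),
\]
or more simply set $w=x_0+t(x-x_0)$ with $t>1$. Such $w$ stays in $\operatorname{aff}(\face_0)$ and within distance $2r_1$ of $x_0$ provided $t\twonorm{x-x_0}\le 2r_1$. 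It is cleaner to push from $x_0$ through $x$ to the boundary of the ball $B_{2r_1}(x_0)$. Then $w\in\face_0\subset\mathcal{K}$ and $x$ is a convex combination
\[
x=\tfrac{1}{t}w+\bigl(1-\tfrac1t\bigr)x_0,\qquad t=\frac{2r_1}{\twonorm{x-x_0}}\ge 2 .
\]
This representation points the wrong way, since it expresses $x$ through $x_0$. What is actually needed is $x$ dominating a multiple of $x_0$. So I would instead extend from $x$ through $x_0$: set
\[
w=x_0+\lambda(x_0-x),\qquad \lambda=\frac{r_1}{\twonorm{x-x_0}} .
\]
Then $\twonorm{w-x_0}=r_1$, so $w\in\face_0$, and
\[
x_0=\frac{1}{1+\lambda}w+\frac{\lambda}{1+\lambda}x .
\]
Hence $x-\frac{1+\lambda}{\lambda}x_0\cdot\frac{\lambda}{1+\lambda}$ gives $\frac{\lambda}{1+\lambda}x = x_0-\frac{1}{1+\lambda}w$. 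Pairing with $s\in\mathcal{K}^*$ and using $\inprd{s}{w}\ge0$ yields
\[
\inprd{s}{x}\ge \tfrac{1+\lambda}{\lambda}\inprd{s}{x_0}-\tfrac{1}{\lambda}\inprd{s}{w}.
\]
The sign on $w$ is again unfavorable, so the decomposition must use $x_0$ as the interior point. The correct step is to write $x_0$ as a combination of $x$ and another face point, but run in the other direction. Using instead $x_0=\theta x+(1-\theta)w$ with $w\in\face_0$ gives
\[
\inprd{s}{x_0}=\theta\inprd{s}{x}+(1-\theta)\inprd{s}{w}.
\]
That bounds $\inprd{s}{x_0}$ below, not above. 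The fix is to bound $x$ from below by $x_0$: write $x=\theta x_0+(1-\theta)w'$ with $w'\in\face_0$. This requires $x$ to lie in the relative interior of the segment from $x_0$ outward, with $w'=x_0+\mu(x-x_0)$ and $\mu=1/(1-\theta)$. Since $\twonorm{x-x_0}\le r_1$, the choice $\mu=2r_1/\twonorm{x-x_0}\cdot\tfrac12\cdot$, suitably bounded, keeps $w'\in B_{2r_1}(x_0)\cap\operatorname{aff}\face_0\subset\face_0$. Then $\inprd{s}{x}=\theta\inprd{s}{x_0}+(1-\theta)\inprd{s}{w'}\ge\theta\inprd{s}{x_0}$.

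Making $\theta$ uniform is the obstacle. Working with the ball $B_{r_1}$ and scaling along rays, I would use cone homogeneity: $\face_0$ is a cone, so $w'$ may be replaced by a rescaled point. Taking $x=\theta x_0+y$ with $y=x-\theta x_0$, I need $y\in\face_0$. With $\theta=\frac{r_1}{2\twonorm{x_0}}$, one has $\twonorm{x/\theta' - x_0}$ small for a suitable normalization; concretely, $y\in\face_0$ iff $x_0+(x-x_0-\theta x_0)/(1-\theta)\in\face_0$. Its distance to $x_0$ is at most
\[
\frac{r_1+\theta\twonorm{x_0}}{1-\theta}\le 2\cdot\tfrac32 r_1 ,
\]
so choosing the interiority radius $3r_1$ at the start makes this work. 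That gives $\theta_1\ge r_1/(2\twonorm{x_0})$.

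The consequence then follows directly. Apply the first part to $\mathcal{K}^*$'s dual pairing with $\mathcal{K}$, taking the face of $\mathcal{K}$ containing $x_0$ in its relative interior and $r_2=r_1$. For such $x$, $\mathcal{C}_x=\mathcal{C}_{x_0}$, and QFV relative to $x_0$ gives
\[
\dist^2(s,\mathcal{C}_{x_0})\le\kappa_0\inprd{s}{x_0}\le(\kappa_0/\theta_1)\inprd{s}{x}.
\]
So $\kappa=\kappa_0/\theta_1$, which is uniform in $x$.
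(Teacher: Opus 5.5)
Your final argument is correct and is essentially the paper's proof. You take a ball around $x_0$ in $\operatorname{aff}(\face_0)=\operatorname{span}(\face_0)$ that lies inside $\face_0$, deduce $x-\theta x_0\in\face_0$ with $\theta\propto r_1/\twonorm{x_0}$, pair with $s\in\mathcal{K}^*$, and set $\kappa=\kappa_0/\theta_1$ for the uniform QFV.

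Two minor remarks. The rescaled point should be $y/(1-\theta)=x_0+(x-x_0)/(1-\theta)$; your expression equals $(x-2\theta x_0)/(1-\theta)$, but its membership in $\face_0$ still implies $y\in\face_0$, so nothing breaks. Also, the convex-combination route you abandoned already works with the fixed choice $w'=2x-x_0$, which lies in $B_{2r_1}(x_0)\cap\operatorname{aff}(\face_0)\subset\face_0$ and gives the uniform constant $\theta_1=1/2$ directly.
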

\begin{proof}
If $x_0=0$ or $\face_0$ is a singleton, the right-hand side is zero, and the assertion is immediate.

Suppose $x_0\ne0$.  Since $x_1\in\rel (\face_0)$, there exists $r>0$ such
that
\[
B_{r}(x) \cap \face_0 \subset \rel(\face_0).
\]
Then, for the choice $r_1 = r/4$ and $\theta = \frac{r_1}{4\twonorm{x_0}}$, we have that for any $x\in B_{r_1}(x_0)$, 
\[
  x_1-\theta\bar x_0
  = x_0 + ((x_1-x_0) - \theta x_0) \in \face_0 \subset \mathcal{K}.
\]
Therefore, for every $s\in\mathcal K^*$, we have 
\[
  0\le\langle s,x-\theta\bar x\rangle
  =
  \langle s,x\rangle-\theta\langle s,\bar x\rangle.
\]

For \eqref{eq: uniform_qfv}, we may take $r_0>0$ to be a radius such that $B_{r_0}(x_0)\cap \face_0\subset \face_0$. Then any $x\in B_{r_0/3}$ satisfies \eqref{eq: inner_p_bound_diff_x_in_rel_face} with $\theta \geq \frac{r_0}{6\twonorm{x_0}}$. Hence, we see \eqref{eq: uniform_qfv} holds for $r_2 = \frac{r_0}{3}$ and $\kappa = \kappa_1/\theta>0$ where $\kappa_1$ is the quadratic facial violation parameter for $\mathcal{C}_{x_0}$ relative to $x_0$, i.e., $\dist^2(s,\mathcal{C}_{x_0})\leq \kappa_1 \inprd{s}{x_0}$ for all $s\in \mathcal{K}^*\cap B$.
\end{proof}
\subsection{Presence of quadratic facial violation in conic programming}\label{sec: Pqfv}

In this section, we here verify this facial condition for several cone classes commonly used in conic optimization. The main message is that the quadratic facial violation property holds for all faces of the standard polyhedral and symmetric cones, and it also holds for the relevant exposed faces of exponential cones and for generic one-dimensional faces of power cones. Together with Proposition \ref{prop:uqg}, these results imply that, for conic programs over these cones, strict complementarity is sufficient to guarantee the uniform quadratic growth of the augmented Lagrangian whenever the corresponding complementary faces fall into the classes listed below.\footnote{Recall a cone $\mathcal{K}$ is proper if it is convex, closed, has nonempty interior, and is pointed \cite{Boyd_Vandenberghe_2004}.}

\begin{prop}[Quadratic facial violation for faces of standard cones]\label{prop:qfv_standard_cones}
Let $\face_s$ be an exposed face of a closed convex cone $\mathcal K$ with defining vector $s\in \mathcal K^*$. Then $\face_s$ admits quadratic facial violation in each of the following cases:
\begin{enumerate}
    \item $\mathcal K=\{0\}^{\dm}$ or $\mathcal K=\mathbb R^{\dm}$;
    \item $\mathcal K$ is a proper cone and $\face_s$ is either $\{0\}^{\dm}$ or $\mathcal K$;
    \item $\mathcal K=\mathbb R_+^{\dm}$, for all faces;
    \item $\mathcal K$ is a second-order cone, for all faces;
    \item $\mathcal K$ is a positive semidefinite cone, for all faces;
    \item $\mathcal K$ is an exponential cone or its dual, for the exposed faces with the defining vectors arising under strict complementarity, i.e., $\mathcal{C}_s$ admits a complementary face $\mathcal{C}^*_x$ and \eqref{eq: sc_conic} is satisfied for $\mathcal{C}_s$ and $\mathcal{C}^*_x$;
    \item $\mathcal K$ is a power cone or its dual, for the generic one-dimensional exposed faces described below.
\end{enumerate}
Moreover, the quadratic facial violation property is preserved under finite Cartesian products of cones and faces in the above list.
\end{prop}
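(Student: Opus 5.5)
The plan is to handle the seven cases one at a time, reducing each to either an elementary computation or a known error bound. Then we check the Cartesian product claim. Throughout, fix a compact set $B$ and write $\face_s=\mathcal K\cap s^\perp$; we must find $\kappa$ with $\dist^2(x,\face_s)\le\kappa\inprd{s}{x}$ for all $x\in B\cap\mathcal K$.

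Cases 1 and 2 are trivial.
\begin{itemize}
\item If $\mathcal K=\{0\}^{\dm}$, the distance is zero.
\item If $\mathcal K=\mathbb R^{\dm}$, then $\mathcal K^*=\{0\}$, so $s=0$ and $\face_s=\mathcal K$.
\item For a proper cone with $\face_s=\mathcal K$, the left side vanishes.
\item For a proper cone with $\face_s=\{0\}$, the vector $s$ must lie in $\operatorname{int}\mathcal K^*$. Compactness of $\mathcal K\cap\{\|x\|=1\}$ then gives $\inprd{s}{x}\ge\mu\|x\|$ with $\mu>0$. Hence $\|x\|^2\le R\|x\|\le (R/\mu)\inprd{s}{x}$, where $R=\max_B\|x\|$.
\end{itemize}

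Case 3 ($\mathbb R^{\dm}_+$). Let $I=\{i:s_i>0\}$. Then $\face_s=\{x\ge0:x_I=0\}$ and $\dist^2(x,\face_s)=\sum_{i\in I}x_i^2$. This is at most $R\sum_{i\in I}x_i\le (R/\min_I s_i)\inprd{s}{x}$.

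Case 5 (PSD cone). Diagonalize $s=Q\operatorname{diag}(\Lambda,0)Q^\top$ with $\Lambda\succ0$, and partition $Q^\top xQ=\begin{pmatrix}X_{11}&X_{12}\\X_{12}^\top&X_{22}\end{pmatrix}$. Then $\face_s$ consists of the matrices with $X_{11}=0,X_{12}=0$. The distance satisfies $\dist^2\le\|X_{11}\|_F^2+2\|X_{12}\|_F^2$. By PSD-ness, $\|X_{12}\|_F^2\le \operatorname{tr}(X_{11})\operatorname{tr}(X_{22})$, which is at most $R\operatorname{tr}X_{11}$. Also $\|X_{11}\|_F^2\le R\operatorname{tr}X_{11}$. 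Finally $\operatorname{tr}X_{11}\le\inprd{s}{x}/\lambda_{\min}(\Lambda)$.

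Case 4 (SOC). Every proper nontrivial face is a ray generated by a boundary point. I would either embed the SOC isometrically (up to scaling) as a slice of a PSD cone via the arrow matrix and inherit Case 5, or compute directly. For the direct computation, take $s=(1,-\bar u)$ with $\|\bar u\|=1$, so the face is the ray $\mathbb R_+(1,\bar u)$. Write $x=(t,w)$. Then $\inprd{s}{x}=t-\inprd{\bar u}{w}$, which is at least $\tfrac{1}{2t}\big(t^2-\inprd{\bar u}{w}^2\big)$ up to bounded factors. The squared distance to the ray is controlled by $\|w-\inprd{\bar u}{w}\bar u\|^2+(t-\inprd{\bar u}{w})^2$. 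The first term is at most $t^2-\inprd{\bar u}{w}^2\le 2t\,(t-\inprd{\bar u}{w})$.

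Case 6 (exponential cone). I would invoke the facial residual / error bound computations of \cite{lindstrom2023error}. These show a Hölderian (in fact Lipschitz or quadratic) facial residual function for each exposed face. The point is that the non-quadratic exceptional faces, such as the face $\{(u,0,w):u\le0,w\ge0\}$ exposed by certain $s$, or a logarithmic behavior for special defining vectors, occur exactly when strict complementarity fails. So I would list the faces $\mathcal C_s$, check which defining vectors are compatible with \eqref{eq: sc_conic} for some complementary $x$, and read off exponent $1/2$ from those results.

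Case 7 (power cone). Similarly, I would use \cite{lin2024generalized}: generic one-dimensional faces, meaning rays through boundary points with $u,v>0$, have a smooth boundary with nonzero curvature. The standard argument for smooth strongly curved boundaries then gives an exponent of $1/2$.

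Products. For $\mathcal K=\prod\mathcal K_i$ and $s=(s_i)$ we have $\face_s=\prod \face^i_{s_i}$, since each $\inprd{s_i}{x_i}\ge0$. Hence $\dist^2=\sum_i\dist^2(x_i,\face^i_{s_i})\le\max_i\kappa_i\sum_i\inprd{s_i}{x_i}$, where each $\kappa_i$ is taken on the projected compact set.

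The main obstacle is Case 6. Pinning down exactly which defining vectors of each exponential-cone face are compatible with strict complementarity, and matching them to the exponent-$1/2$ facial residual results rather than the exceptional logarithmic ones, requires careful case bookkeeping against \cite{lindstrom2023error}.
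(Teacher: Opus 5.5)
Your proposal is correct in outline and follows the same case-by-case structure as the paper. The trivial cones, $\mathbb R^d_+$ and the second-order cone get direct estimates. For the second-order cone, your identity $\operatorname{dist}^2=\tfrac12(t-a)^2+\|w-a\bar u\|^2$ with $a=\langle\bar u,w\rangle$ is the paper's computation done more cleanly. The exponential and power cones are referred to the facial-residual literature, and products are handled by summing over blocks.

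The differences from the paper are modest:
\begin{itemize}
\item For the PSD cone you give a self-contained block argument, using $X_{ij}^2\le X_{ii}X_{jj}$ and $\operatorname{tr}X_{11}\le\langle s,x\rangle/\lambda_{\min}(\Lambda)$. The paper instead cites a lemma from \cite{ding2021optimal}.
\item For generic power-cone rays you sketch a curvature argument where the paper cites \cite{lin2024generalized}. Your argument goes through once you reduce by homogeneity to unit vectors, use compactness away from the ray, and note that the Hessian of $u^pv^{1-p}$ has rank one with radial null direction.
\item Drop the arrow-matrix alternative for the second-order cone. The arrow map does not carry $\langle s,x\rangle$ to the trace pairing, and the PSD face containing the image of a ray is much larger than that image. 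So Case 5 does not transfer directly.
\end{itemize}

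\textbf{Case 6 needs fixing.} Your example of an exceptional face is wrong.
\begin{itemize}
\item The face $\{(u,0,w):u\le0,\ w\ge0\}$ has, up to scaling, the single exposing vector $(0,1,0)$. That vector is strictly complementary to every $x$ in the relative interior of the face.
\item QFV does hold on this face. For $(u,v,w)\in\mathcal K$ with $v>0$ one has $u\le v\ln(w/v)$. So on a compact set the distance is $O(v\ln(1/v))$ and its square is $O(v)$. If this face were exceptional, Case 6 itself would be false.
\item The genuine failure is the ray $\{(u,0,0):u\le0\}$ exposed by $(0,0,1)$. The points $(-1,1/\ln(1/t),t)$ lie in $\mathcal K$ for small $t>0$, pair to $t$ with $s$, and are at distance at least $1/\ln(1/t)$ from the ray.
\item This exposing vector is not strictly complementary, because the complementary face is $\{(0,y,z):y,z\ge0\}$. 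The strictly complementary choices $(0,y,z)$ with $y,z>0$ again give an $O(t\ln(1/t))$ bound.
\end{itemize}
So for these two non-generic faces, what you read off from \cite{lindstrom2023error} is an entropic residual, not a residual with exponent $1/2$. It still implies QFV on compact sets.

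\textbf{The dual cones are missing.} The statement also covers the dual exponential and dual power cones, which you never treat. As in the paper, each dual cone is the image of the primal cone under an invertible linear map $M$. Such a map sends exposed faces to exposed faces, with exposing vectors transformed by $s\mapsto M^{-\top}s$. It preserves $\langle s,x\rangle$ and strict complementarity, and changes the QFV constant by at most a factor $\|M\|^2$.
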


\begin{proof}
We prove the QFV property case by case. 
\paragraph{1. $\mathcal K=\{0\}^{\dm}$ or $\mathcal K=\mathbb R^{\dm}$.} If $\mathcal{K} = \{0\}^\dm$, then the only face is $\mathcal{K}$ itself. The inequality \eqref{eq: qfv_inequality} trivially holds as the only possible $x$ is $x=0$. If $\mathcal{K} = \mathbb{R}^\dm$, then the only face of it is itself. We also have $\mathcal{K}^* = \{0\}^\dm$, meaning the $s$ element in \eqref{eq: qfv_inequality} is $0$ and again 
\eqref{eq: qfv_inequality} trivially holds. 

\paragraph{2. $\mathcal K$ is a proper cone and $\face_s$ is either $\{0\}^{\dm}$ or $\mathcal K$.} Suppose that the cone $\mathcal{K}\subset\mathbb{R}^\dm$ is a proper cone. We show that for the two trivial faces, the inequality \eqref{eq: qfv_inequality}. First, for the trivial face $\face_s = \{0\}^\dm$. The defining vector is any element $s\in \intr(\mathcal{K}^*)$. Pick any $s\in \intr(\mathcal{K}^*)$, we have that $\inprd{s}{x}>0$ for any nonzero $x\in \mathcal{K}$. Hence, for the quantity $c=\inf_{\twonorm{x}=1,x\in \mathcal{K}} \inprd{s}{x}>0$ and for any $x\in \mathcal{K}$, we have 
\begin{equation}
    \dist(x,\face_s)= \twonorm{x} \leq \frac{1}{c} 
    \inprd{s}{x}.
\end{equation}

Hence, \eqref{eq: qfv_inequality} holds with $\kappa = \frac{1}{c^2}\sup_{x\in B\cap \mathcal{K}} \inprd{s}{x}$. Second, for the trivial face $\face_0 = \mathcal{K}$ itself, we must have $s=0$. In this case, both sides  \eqref{eq: qfv_inequality} are $0$ and \eqref{eq: qfv_inequality} holds. 

\paragraph{3. $\mathcal K=\mathbb R_+^{\dm}$ for all faces.} Given a defining vector $s\geq 0$, by letting $I_s = \{i\mid s_i>0\}$, the face $\face_{s} = \{x\mid x\geq 0 \;\text{and}\;x_i =0\;\text{for all}\;i\in I_s\}$. We assume, without loss of generality, that $ s\not=0$, since the $s=0$ case reduces to case 2 above. Then, for any $x\geq 0$, we have 
\begin{equation}
    \dist(x,\face_{s}) =\twonorm{x_{I_s}}\leq \sum_{i\in I_s} x_i \leq \frac{1}{\min_{i\in I_s} s_i}\inprd{s}{x}.
\end{equation}
Thus, the quadratic facial violation \eqref{eq: qfv_inequality} holds with $\kappa =  \frac{1}{\min_{i\in I_s} s_i^2}\sup_{x\in B\cap \mathcal{K}} \inprd{s}{x}$ for any compact $B$. Alternatively, quadratic facial violation \eqref{eq: qfv_inequality} can be established using Hoffman's lemma \cite{hoffman1952approximate}.

\paragraph{4. $\mathcal K$ is a second-order cone, for all faces.} The trivial faces have been considered before. For nontrivial face of $\mathcal{K}_{\text{SOC}}^d$, a defining vector $s = (r,y)$ satisfies $\twonorm{y} = r$ and the corresponding face is 
$\face_s = \{\lambda (r,-y)\mid \lambda \geq 0\}$. Define $\check{s} = \frac{1}{\twonorm{s}}(r,-y)$. Then, for any $x = (t,z) \in \mathcal{K}_{\text{SOC}}^d$, 
we find that 
\begin{equation}
\begin{aligned}
    \dist^2(x, \face_s) & = \twonorm{x - \inprd{\check{s}}{x}\check{s}}^2 = \twonorm{x}^2 - \inprd{\check{s}}{x}^2 = 
    t^2 +z^2 - \frac{\left( \inprd{s}{x}-2tr\right)^2}{\twonorm{s}^2} \\ 
    & \overset{(a)}{=}
    t^2 + z^2 - 2t^2 - \inprd{\frac{s}{\twonorm{s}}}{x}^2 + 2\sqrt{2}\frac{t}{\twonorm{s}}\inprd{s}{x} 
    \overset{(b)}{\leq} 
    2\sqrt{2} \frac{t}{\twonorm{s}} \inprd{s}{x},
\end{aligned}
\end{equation}
where the step $(a)$ is due to $\twonorm{y} = r=\frac{1}{\sqrt{2}}\twonorm{s}$ and the stpe $(b)$ is because of $t\geq \twonorm{z}$. Hence, the quadratic facial violation \eqref{eq: qfv_inequality} holds with $\kappa =2\sqrt{2} \frac{\sup_{x\in B} \twonorm{x}}{\twonorm{s}}$ for any compact $B$ as $t\leq \twonorm{x}$ for $x\in \mathcal{K}^d_{\text{SOC}}$. 

\paragraph{5. $\mathcal K$ is a positive semidefinite cone, for all faces.} The trivial faces have been considered before. For nontrivial face of $\mathbb{S}_+^d$, a defining matrix $S\in \mathbb{S}_+^d$ has rank $r>0$ and the corresponding exposed face is 
$\face_S = \{X \mid X = VRV^\top, \;R\in \mathbb{S}_+^{d-r}\}$ where the matrix $V\in \mathbb{R}^{d \times {(d-r)}}$ has orthonormal columns and span the null space of $S$
Note that the projection of any $X\in \mathbb{S}_+^d$ to the smallest linear space containing $\mathcal{C}_S$ is $X_V = VV^\top XVV^\top$ and $X_V \in \mathcal{C}_S$. Hence, we have 
\begin{equation}
    \dist(X,\face_S) = 
    \fronorm{X - X_V} \overset{(a)}{\leq} \frac{\inprd{S}{X}}{\lambda_{r}(S)} + \sqrt{\frac{2\inprd{S}{X}}{\lambda_{r}(S)}\lambda_1(X)},
\end{equation}
where $\lambda_r(S)$ is the $r$-th largest eigenvalue of $S$, and $\lambda_1(X)$ is the largest eigenvale of $X$, and the step $(a)$ is due to \cite[Lemma 4.3]{ding2021optimal}. Hence, by letting $B = \sup_{X\in B} \fronorm{X}$, the quadratic facial violation \eqref{eq: qfv_inequality} holds with $\kappa = 2\frac{\fronorm{S}B}{\lambda_{d-r}^2(S)} + 2\frac{B}{\lambda_{d-r}(S)}$ as $\lambda_1(X) \leq \fronorm{X}$ for $X\in \mathbb{S}_+^d$ and the Cauchy-Schwarz $\inprd{S}{X}\leq \fronorm{S} \fronorm{X}$.

While the above cones admit simple facial structures, and their quadratic facial violations can be established in a few lines. The cases of the exponential cone and the power cone (and their dual cones) are much more involved. 
Thankfully, a recent line of work \cite{lindstrom2023error,lin2024generalized,lin2025tight,wang2025error} has studied the 
the facial structure carefully.
They introduce a notion called one-step facial residual functions, which, in our context, when the point $x$ is restricted to a compact set and the cone, simplifies to an upper bound of $\dist(x,\mathcal{F})$ in terms of $\inprd{x}{s}$. We specify the situation of the quadratic facial violations for these cones, building on their work below.

\paragraph{6. $\mathcal K$ is an exponential cone or its dual, for the exposed faces arising under strict complementarity.}  Recall the exponential cone is $\mathcal{K} = \{(u,v,w) \in \mathbb{R}^3 : v \exp(\frac{u}{v}) \le w, v > 0\} \cup
\left\{
  (u,0,w):
  u\le 0,\; w\ge 0
\right\}$  and its dual cone $\mathcal{K}^*=\{(u,v,w)\in\mathbb R^3:-u\exp\pran{\tfrac{v}{u}}\leq ew, u<0\}\cup
\left\{
  (0,v,w):v\ge0,\ w\ge0
\right\}$ is simply a rotated and scaled version of it. More precisely, consider the invertible map $\Psi: (x,y,z)\in \mathbb{R}^3 \mapsto (-y,-x,\frac{z}{e})\in \mathbb{R}^3$, then $\mathcal{K}^* = \Psi(\mathcal{K})$. Note that quadratic facial violation is preserved under invertible linear transformations, up to changing constants on compact sets. Thus, we focus only on the exponential cone here. As described in \cite[Section 4.1]{lindstrom2023error}, there are three kinds of exposed faces. Thankfully, under strict complementarity, it can be quickly verified that all three type of faces satisfy quadratic facial violation  under the corresponding strict complementary defining vector \cite[Corollary 4.4, Corollary 4.11 Item (i), and Corollary 4.7]{lindstrom2023error}; we omit the details.

\paragraph{7. $\mathcal K$ is a power cone or its dual, for the generic one-dimensional exposed faces.} Recall our power cone is $\mathcal K = \{(u,v,w) \in \mathbb{R}^3 : u^p v^{1-p} \ge |w|\}.$ with $p\in (0,1)$ and the dual cone $\mathcal K^*:=\{(u,v,w)\in\mathbb R^3:({\tfrac{u}{p}})^p({\tfrac{v}{1-p}})^{1-p}\geq |w|\}$, which is a scaled power cone under the map $\Psi: (u,v,w)\mapsto(up,v(1-p),w)$.
It is well-known that the power cone is a rotated second-order cone when $p=\frac{1}{2}$. Hence, let us focus on the power cone with $p\not=\frac{1}{2}$ and its proper nontrivial face. By \cite[Proposition 3.3 and Proposition 3.4]{lin2024generalized}, there are two types of nontrivial proper faces, both of which are exposed and are rays. The first kind contains only two rays: $E_1 = \mathbb{R}_+ (1,0,0)$ and $E_2 = \mathbb{R}_+(0,1,0)$, and they are actually complementary to each other. Moreover, according to \cite[Item (ii) of Corollary 3.9]{lin2024generalized}, they can not satisfy the quadratic facial violation simultaneously for $p\not=\frac{1}{2}$. The second kind, according to \cite[Proposition 3.3]{lin2024generalized}, has 
its defining vector $s = (s_x,1-p,s_z \in \partial \mathcal{K}^*/\{0\}$ satisfying $(\frac{s_x}{p})^p  = |s_z|$ and the face is \begin{equation}
\face_s = \left\{t \left(\frac{p}{s_x},1,-\frac{1}{s_z}\right)\in \mathbb{R}^3 \mid t\geq 0\right \}.
\end{equation}
The complementary face in this case is 
$\face_x^* = \{t s\mid t\geq 0\}$ with any defining vector of the form $x = t \left(\frac{p}{s_x},1-p,-\frac{1}{s_z}\right)$ with $t>0$.
This type of face satisfies the quadratic facial violation using \cite[Item (i) Corollary 3.9]{lin2024generalized}. 

The faces $\mathcal{C}_s$ are generic in the following sense. 
If we consider the set of normalized proper faces $\mathcal{N} = \{ \frac{x}{\norm{x}}\mid \text{$x$ is in a proper face}\}$, then the one-dimensional Hausdorff measure of $\{\frac{x}{\twonorm{x}} \mid x\in E_1 \cup E_2\}$ is zero, as it is a set of two points, and the measure of $\mathcal{N}$ is the same as that of $\{\frac{x}{\twonorm{x}} \mid \text{$x$ in some $\mathcal{C}_s$}\}$. Note also that $\mathcal{C}_s$ converges to $E_1$ when $s_x \rightarrow 0^+$ and converges to $E_2$ when $s_x \rightarrow +\infty$. 

\paragraph{Cartesian product of cones.}
The quadratic facial violation property is preserved under finite Cartesian products of cones and faces in the above list. Suppose our cone is a finite Cartesian product of cones:
\begin{equation}
    \mathcal{K} = \times_{i=1}^q \mathcal{K}_i\quad {and}\quad \mathcal{K}_i \subset \mathbb{R}^{d_i}  \quad \text{for some positive integers $q$ and $d_i$.} 
\end{equation}
Then it is straightforward to verify that (i) the dual cone of $\mathcal{K}$ is $\mathcal{K}^* = \times_{i=1}^q \mathcal{K}_i^*,$ and (ii) if an exposed face $\face_{s}$ of $\mathcal{K}$ is defined by $s=(s_1,\dots,s_q)\in \mathcal{K}^*=\times_{i=1}^q \mathcal{K}_i^*$, then $\face_{s} = \times_{i=1}^q \face_{s_i}$ with $s_i$ as the defining vector of $\face_{s_o}$. 
Thus, for any compact set $B \subset \times_{i=1}^q \mathbb{R}^{d_i}$, we have that for any $x=(x_1,\dots,x_n)\in \mathcal{K}\cap B$, each component $x_i \in \mathcal{K}_i$ and is in a compact set in $\mathbb{R}^{d_i}$. Hence, for each $i$, there is a $\kappa_i$ such that 
\begin{equation}
    \dist^2(x_i,\face_{s_i}) \leq \kappa_i \inprd{s}{x_i}.
\end{equation}
Finally, we have 
\begin{equation}
    \dist^2(x, \face_s) = \sum_{i=1}^n 
    \dist^2(x_i,\face_{s_i}) \leq \sum_{i=1}^q 
    \kappa_i \inprd{x_i}{s_i}\leq 
    \max_{1\leq i\leq q}\{\kappa_i\} \inprd{s}{x}.
\end{equation}

Our proof is complete.
\end{proof}

\begin{rem}[QFV holds for faces of the PSD cone and symmetric cones]
The quadratically facial violation for the PSD cone is implicitly proved in \cite{sturm2000error}. More generally, in \cite[Theorem 35]{lourencco2021amenable},  Lourenco has established that for any symmetric cone $\mathcal{K}$ and any exposed face $\face_s$ with a defining vector $s$, we have that there is a number $\kappa'>0$ such that for any $x\in \mathcal{K}$, 
 \begin{equation}
    \dist(x,\face_s) \leq \kappa'(\inprd{s}{x} + \sqrt{\inprd{s}{x}}).
 \end{equation} 
 It is then straightforward to verify that the quadratic facial violation condition holds for any \emph{symmetric cone}. We keep the more elementary (and hopefully simple enough) approach for particular cases of symmetric cones as it makes the paper more self-contained, and provides a more concrete description of the constant $\kappa$ in the quadratic facial violation condition.
\end{rem}

\paragraph{Additional cones} We refer the readers to the line of work \cite{lindstrom2025optimal,lin2024generalized,lin2025tight} for the presence of quadratic facial violation on various faces and cones of practical interest. Though not all faces satisfy quadratic facial violation, we believe that when restricted to strict complementary faces or faces in common generic sense, quadratic facial violation does hold. We leave the detail checking to future work.

\section{Equivalence between uniform quadratic growth of AL functions and metric subregularity on \eqref{eq: pd}}\label{sec:equiv}

In this section, we establish an equivalence between three local regularity conditions for the primal-dual formulation \eqref{eq: pd}, spelled out in detail in Theorem \ref{thm:equivalence}: uniform quadratic growth of the primal and dual augmented Lagrangian functions, quadratic growth of a localized smoothed primal-dual gap, and metric subregularity of the saddle-point subdifferential. These three conditions arise naturally from different perspectives. The augmented Lagrangian growth condition is the form obtained from the conic-geometric analysis in Section \ref{sec: uniformQG}; the smoothed-gap growth condition is the condition used in primal-dual algorithmic analyses; and metric subregularity is the standard variational-analytic error bound for the saddle-point inclusion. The main result of this section shows that, locally around a reference optimal solution, these three viewpoints are equivalent, provided the quadratic growth constants are uniform over nearby optimal reference points.

As a consequence, the uniform augmented Lagrangian growth established in Proposition \ref{prop:uqg} can be transferred directly into metric subregularity of the saddle-point subdifferential, and hence into the error-bound condition used in the local linear convergence analysis in the next section.

In the following we start with the definition of metric subregularity and quadratic growth of localized smoothed gap, followed by main results of this section.
Throughout Section \ref{sec:equiv} and \ref{sec:linear}, we denote $f(x):=c^\top x+\iota_{\mathcal K}(x)$ and $g(u):=\iota_{\{b\}}(u)$, $f^*$ and $g^*$ as the convex conjugates of $f$ and $g$, respectively, and we define the 
\begin{equation}\label{eq: saddle_subdiff}
\mathcal F(x,y):=\begin{bmatrix}
        \partial f(x)+A^{\top} y \\ -Ax+\partial g^*(y)
    \end{bmatrix}=\begin{bmatrix}
        c+A^{\top} y+ N_{\mathcal K}(x) \\ b-Ax
    \end{bmatrix}
\end{equation}
as the sub-differential of primal-dual problem \eqref{eq: pd}. In particular, our augmented Lagrangian in \eqref{eq: AL_conic} becomes 
\begin{subequations}\label{eq: AL_gen}
\begin{align}
    L_{\beta}^P(x,u,y) & := f(x)+g(u) +\inprd{y}{Ax-u}+ \frac{1}{2\beta} \|Ax-u\|_2^2\quad \\
      L_{\beta}^D(y,v,x) &:=f^*(v)+g^*(y)+\inprd{x}{-A^{\top} y-v}+\frac{1}{2\beta}\|A^{\top} y+v\|_2^2 \ .
\end{align}
\end{subequations}

\begin{mydef}[Metric subregularity]\label{def:metric-subreg}
    Let ${\zsol}_0\in \zsset$ and $r>0$. We call \eqref{eq: pd} satisfies metric subregularity on region $B_r({\zsol}_0)\subset\mathcal Z$, if there exists a constant $\gamma > 0$ such that for all $z \in B_r({\zsol}_0)$,
    \begin{equation*}
        \gamma \operatorname{dist}_2(z, \zsset) \le \operatorname{dist}_2(0, \mathcal{F}(z)) \ .
    \end{equation*}
\end{mydef}

Next we introduce an intermediate notion, dubbed localized smoothed gap, that is crucial to bridge quadratic growth of AL functions with metric subregularity.
\begin{mydef}[Localized smoothed gap]
    Let $\beta>0$, $z_0^*\in \zsset$ and $r>0$. The localized smoothed gap of \eqref{eq: pd} is defined with $\zzref \in \mathcal{Z}\cap B_r({\zsol}_0)$
    and for any $z \in \mathcal Z\cap B_r({\zsol}_0)$ as 
    \begin{equation}\label{eq:eq-qg-gap}
        G_{\beta}(z; \zzref) = \max_{z'=(x',y') \in \mathcal{Z}}\ L(x, y') - L(x', y) - \frac{\beta}{2} \|z' - \zzref\|_2^2 \ .
    \end{equation}
\end{mydef}

The following theorem is the main result of this section. It shows that uniform quadratic growth of the reduced augmented Lagrangians, quadratic growth of the localized smoothed gap, and metric subregularity of the saddle-point mapping \(\mathcal F\) are locally equivalent. The uniformity is with respect to nearby optimal reference points, which is the form needed in the subsequent convergence analysis.
 
\begin{thm}\label{thm:equivalence}
    Consider the primal-dual problem \eqref{eq: pd}. Let ${\zsol}_0\in \zsset$. Then the following conditions are equivalent:
    \begin{enumerate}
        \item[(i)](Uniform quadratic growth of {reduced} AL functions) 
        There exists a radius $r>0$ such that the reduced primal and dual augmented Lagrangian functions satisfy uniform quadratic growth on $B_r({\zsol}_0)\subset\mathcal Z$. Specifically, there exist a constant $\beta>0$ and constants $\alpha_P:=\alpha_P(\beta)>0$ and $\alpha_D:=\alpha_D(\beta)>0$ such that, for all $\zsol=(\xsol,\ysol)\in\zsset\cap B_r({\zsol}_0)$ and for all $z=(x,y)\in\mathcal Z\cap B_r({\zsol}_0)$,
\begin{subequations}\label{eq:uniform-qg-reduced-al}
\begin{align}
L_\beta^P(x,\ysol)-L_\beta^P(\xsol,\ysol)
&\geq \frac{\alpha_P}{2}\dist_2^2(x,\xsset),\\
L_\beta^D(y,\xsol)-L_\beta^D(\ysol,\xsol)
&\geq \frac{\alpha_D}{2}\dist_2^2(y,\ysset),
\end{align}
\end{subequations}
where the reduced augmented Lagrangian functions are defined by
\[
L_\beta^P(x,\ysol):=\min_u L_\beta^P(x,u,\ysol),
\qquad
L_\beta^D(y,\xsol):=\min_v L_\beta^D(y,v,\xsol).
\]

        \item[(ii)](Quadratic growth of localized smoothed gap) There exists a radius $ r>0$ such that the smoothed gap of \eqref{eq: pd} satisfies quadratic growth on $B_{ r}({\zsol}_0)\subset\mathcal Z$, i.e., there exist a constant $\beta>0$ and a constant $\alpha:=\alpha(\beta) > 0$ such that for all ${\zsol} \in \zsset\cap B_{ r}({\zsol}_0)$ and for all $z \in \mathcal Z\cap B_{ r}({\zsol}_0)$,
        \begin{equation*}
            G_{\beta}(z; {\zsol}) \geq \frac{\alpha}{2} \operatorname{dist}_2(z, \zsset)^2\ .
        \end{equation*} 

        \item[(iii)](Metric subregualrity) There exists a radius $r>0$ such that \eqref{eq: pd} satisfies metric subregularity on $B_r({\zsol}_0)\subset\mathcal Z$, i.e., there exists a constant $\gamma>0$ such that for any $z\in \mathcal Z\cap B_{r}({\zsol}_0)$, it holds that
        \begin{equation*}
            \gamma\dist_2(z,\zsset)\leq \dist_2(0,\mathcal F(z)) \ .
        \end{equation*}
    \end{enumerate} 
\end{thm}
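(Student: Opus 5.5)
The plan is to prove the cycle (i)$\Leftrightarrow$(ii), (ii)$\Rightarrow$(iii), and (iii)$\Rightarrow$(i). Radii shrink at each step (e.g.\ $r\mapsto r/2$ or $r/3$), and $\beta$ is rescaled where needed.

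\emph{Step 1: (i)$\Leftrightarrow$(ii) by exact splitting of the gap.} Since $L(x,y')-L(x',y)$ separates into a part depending on $(x,y')$ and a part depending on $(x',y)$, the maximization in \eqref{eq:eq-qg-gap} decouples into a maximization over $y'$ and one over $x'$. Take $\zzref=\zsol$.
\begin{itemize}
\item The $y'$-part is $\max_{y'} f(x)+\inprd{Ax-b}{y'}-\frac{\beta}{2}\|y'-\ysol\|^2 = f(x)+\inprd{Ax-b}{\ysol}+\frac{1}{2\beta}\|Ax-b\|^2$. This equals $L^P_{1/\beta}(x,\ysol)$, because minimizing over $u$ forces $u=b$.
\item The $x'$-part is $\max_{x'} -f(x')-\inprd{Ax'-b}{y}-\frac{\beta}{2}\|x'-\xsol\|^2$. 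By Fenchel duality of this strongly concave problem, it equals $\min_v f^*(v)+g^*(y)-\inprd{\xsol}{A^{\top}y+v}+\frac{1}{2\beta}\|A^{\top}y+v\|^2=L^D_{\beta'}(y,\xsol)$, with the penalty parameters matched after rescaling.
\item Strong duality gives $L^P(\xsol,\ysol)=-L^D(\ysol,\xsol)$ and hence $L^P(\xsol,\ysol)+L^D(\ysol,\xsol)=0$.
\end{itemize}
Together these yield
\[
G_\beta(z;\zsol)=\big[L^P(x,\ysol)-L^P(\xsol,\ysol)\big]+\big[L^D(y,\xsol)-L^D(\ysol,\xsol)\big].
\]
Since $\dist^2(z,\zsset)=\dist^2(x,\xsset)+\dist^2(y,\ysset)$, (i) implies (ii) with $\alpha=\min(\alpha_P,\alpha_D)$. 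Conversely, (ii) implies (i) by setting $y=\ysol$ (respectively $x=\xsol$), which kills one bracket and leaves the corresponding squared distance.

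\emph{Step 2: (ii)$\Rightarrow$(iii).} Take $z\in B_{r/2}(\zsol_0)$, let $\zsol$ be its projection onto $\zsset$ (so that $\zsol\in B_r(\zsol_0)$), and take any $w\in\mathcal F(z)$. Convexity–concavity of $L$ gives $L(x,y')-L(x',y)\le\inprd{w}{z-z'}$. Maximizing over $z'$ then gives
\[
G_\beta(z;\zsol)\le \inprd{w}{z-\zsol}+\tfrac{1}{2\beta}\|w\|^2\le \|w\|d+\tfrac{1}{2\beta}\|w\|^2,\qquad d=\dist(z,\zsset).
\]
Combining this with $\frac{\alpha}{2}d^2\le G_\beta(z;\zsol)$ and solving the quadratic inequality in $d$ gives $d\le C\|w\|$ with $C=C(\alpha,\beta)$. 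This is metric subregularity.

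\emph{Step 3: (iii)$\Rightarrow$(i), the main obstacle.} I will argue the primal inequality; the dual one is symmetric. Fix $\ysol$ and set $\varphi(x)=L^P(x,\ysol)$. This is convex and its minimizer set is exactly $\xsset$.
\begin{itemize}
\item With $y=\ysol+\frac1\beta(Ax-b)$, the first block of $\mathcal F(x,y)$ is exactly $\partial\varphi(x)$, and the second block is $b-Ax$. Hence $\dist(0,\mathcal F(x,y))\le\dist(0,\partial\varphi(x))+\|Ax-b\|$.
\item Combining with (iii) and $\dist(x,\xsset)\le\dist((x,y),\zsset)$ gives $\gamma\,\dist(x,\xsset)\le\dist(0,\partial\varphi(x))+\|Ax-b\|$.
\item The term $\|Ax-b\|^2$ is controlled by $2\beta(\varphi(x)-\varphi^\star)$. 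This uses the inequality $\varphi(x)-\varphi^\star\ge f(x)+\inprd{Ax-b}{\ysol}-\varphi^\star+\frac1{2\beta}\|Ax-b\|^2$, where the first three terms are nonnegative because $\ysol$ is a Lagrange multiplier. Note that this bound is in terms of the function gap, not the subgradient.
\end{itemize}
To close the argument I would first try the Drusvyatskiy--Lewis/Aragón--Geoffroy equivalence between subregularity of $\partial\varphi$ and quadratic growth of $\varphi$, adapted to this mixed error bound. The idea is to run the proximal/slope argument: follow the proximal path $x_t=\prox_{t\varphi}(x)$, along which $\varphi$ decreases at a rate bounded below by $\dist^2(0,\partial\varphi(x_t))$. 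Then the mixed bound $\dist(x_t,\xsset)\lesssim\dist(0,\partial\varphi(x_t))+\sqrt{\varphi(x_t)-\varphi^\star}$ integrates to $\varphi(x)-\varphi^\star\gtrsim\dist^2(x,\xsset)$.

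The delicate points in this step are three. First, we must keep $(x_t,y_t)$ inside $B_r(\zsol_0)$, which needs $x$ and $\ysol$ close to the center and $\beta$ large enough that $\|Ax-b\|/\beta$ is small. Second, the constants must not depend on which $\ysol\in\ysset\cap B_r(\zsol_0)$ was chosen. This uniformity should hold because (iii) gives a single $\gamma$ on the whole ball, but it must be tracked carefully. Third, the $\sqrt{\varphi-\varphi^\star}$ term must be absorbed, by taking $\beta$ appropriately.
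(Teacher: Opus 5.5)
Your Steps 1 and 2 are sound.

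\textbf{Step 1.} This is the paper's argument. With $\zzref=\zsol$ the gap splits exactly as $G_\beta(z;\zsol)=[L^P_\beta(x,\ysol)-L^P_\beta(\xsol,\ysol)]+[L^D_\beta(y,\xsol)-L^D_\beta(\ysol,\xsol)]$. Both parts come out with the same $\beta$ in the paper's normalization $\frac{1}{2\beta}\|\cdot\|^2$, so your label $L^P_{1/\beta}$ and the rescaling remark are unnecessary. Your radius shrinking for (ii)$\Rightarrow$(i) is a detail the paper omits.

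\textbf{Step 2.} This is a different and shorter route than the paper's. You linearize $L$ at $z$ using $w\in\mathcal F(z)$, get $G_\beta(z;\zsol)\le\|w\|\,d+\frac{1}{2\beta}\|w\|^2$, and solve the quadratic. The paper instead bounds $G_\beta$ at the projected point by $G_{\beta/2}(z;z)+\frac{\beta}{2}d^2$ and estimates $G_{\beta/2}(z;z)$ through prox points. That route must first shrink $\beta$ below $2\alpha$; yours avoids that reduction.

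\textbf{Step 3 is the gap.} You leave it open precisely at its crux. Write $g=\dist(0,\partial\varphi(x))$, $\delta=\dist(x,\xsset)$ and $E=\varphi(x)-\varphi^\star$. Your mixed bound $\gamma\delta\le g+\sqrt{2\beta E}$ is correct. It is a slightly sharper form of the paper's bound, obtained by applying (iii) at the same point $(x,\ysol+\frac1\beta(Ax-b))$.

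The mechanism you propose for the $\sqrt{E}$ term does not work. Since $2\beta E\ge\|Ax-b\|^2$ for every $\beta>0$, tuning $\beta$ never pushes this term below $\|Ax-b\|$, which is in general of the same order as $\delta$. The proximal-path integration is only asserted: running it needs a lower bound on the slope along the path, and that is exactly what you have not produced.

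\textbf{The missing ingredient} is the elementary convexity estimate
$E\le\langle v,x-\operatorname{proj}_{\xsset}(x)\rangle\le g\,\delta$ for $v\in\partial\varphi(x)$, valid because $\operatorname{arg\,min}\varphi=\xsset$. Substituting it into your mixed bound gives $\gamma\delta\le g+\sqrt{2\beta g\delta}$. With $\xi=\sqrt{g/\delta}$ this reads $\xi^2+\sqrt{2\beta}\,\xi\ge\gamma$, hence $g\ge c(\gamma,\beta)\,\delta$. That is pure metric subregularity of $\partial\varphi$, with a constant independent of $\ysol$. The Drusvyatskiy--Lewis / Arag\'on Artacho--Geoffroy theorem then yields quadratic growth directly, with no bespoke path argument; this is the paper's route.

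\textbf{Your other two worries} then resolve easily.
\begin{itemize}
\item Uniformity: every $\varphi=L^P_\beta(\cdot,\ysol)$ has the same minimizer set $\xsset$. So all of them can be centered at ${\xsol}_0$, giving one radius and one growth constant.
\item Staying in the ball: keeping $(x,\ysol+\frac1\beta(Ax-b))$ in $B_r({\zsol}_0)$ only requires shrinking the radius for fixed $\beta$, since $\|Ax-b\|=\|A(x-\xsol)\|\le\|A\|\,\|x-\xsol\|$. Large $\beta$ is not needed.
\end{itemize}
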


\begin{rem}
    Quadratic growth with respect to the smoothed gap \eqref{eq:eq-qg-gap} was originally proposed in \cite{fercoq2022quadratic}, as an alternative condition ensuring local linear convergence of operator splitting type methods. Our result, Theorem \ref{thm:equivalence}, demonstrates that the quadratic growth of the smoothed gap \eqref{eq:eq-qg-gap} is equivalent to metric subregularity for primal-dual problems. 
\end{rem}

\begin{rem}
    We note that the constant $\alpha$ in the quadratic growth of the smoothed gap has to be uniform over all 
    ${\zsol} \in \zsset \cap B_r({{\zsol}}_{0})$ rather than for a single ${\zsol}$. The uniformity poses an additional technicality in establishing the quadratic growth, requiring 
\end{rem}

With Proposition \ref{prop:uqg} and Theorem \ref{thm:equivalence}, we quickly conclude the following corollary. 
\begin{cor}[Strict complementarity and Quadratic facial violation imply metric subregularity]\label{cor: sc-ms}
Instate the assumption of Proposition \ref{prop:uqg}, i.e.,
suppose \eqref{eq: p} and \eqref{eq: d} admits a strict complementarity pair ${\zsol}_{0} = ({\xsol}_0,{\ysol}_0)$ and the corresponding complementary faces 
$\face_{{\xsol}_0}$ and $\face_{{\ssol}_0}$, where ${\ssol}_0 = c+A^{\top} {\ysol}_0$, admit quadratic facial violation. then the three conditions described in Theorem \ref{thm:equivalence} hold. In particular,  \eqref{eq: pd} satisfies metric subregularity on $B_r({\zsol}_0)\subset\mathcal Z$ for some $r>0$.
\end{cor}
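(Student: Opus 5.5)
The plan is to verify condition (i) of Theorem \ref{thm:equivalence} from the conclusion of Proposition \ref{prop:uqg}, and then let the theorem supply (ii) and (iii). First I fix $\beta>0$ and take the radius $r>0$ from Proposition \ref{prop:uqg}. I then shrink $r$ if needed so that $r$ is at most that radius. Next I choose a compact set $B$ large enough to contain every $(x,v,u,y)$ the argument will use; the construction of $B$ is the second step below. Proposition \ref{prop:uqg} then gives constants $\kappa_1,\kappa_2$ valid for all $\zsol\in\zsset\cap B_r({\zsol}_0)$.

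The second step is to pass from the unreduced to the reduced augmented Lagrangians. On the primal side, $g=\iota_{\{b\}}$ forces $u=b$, so $L_\beta^P(x,\ysol)=L_\beta^P(x,b,\ysol)$. I apply \eqref{eq: p_lag_Q_conic} with $u=b$ and use
\[
\dist((x,b),\mathcal P_\star)\ \ge\ \dist(x,\xsset),
\]
since $\mathcal P_\star=\{(x',Ax')\}$ and $Ax'=b$ there. This gives the primal half of \eqref{eq:uniform-qg-reduced-al} with $\alpha_P=2\kappa_1$, for all $x$ in the bounded ball.

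On the dual side I must check that the minimizer in $v$ stays bounded, so that it lies in $B$. Write $s=c-v$. The reduced function is
\[
\min_{s\in\mathcal K^*}\ \inprd{\xsol}{s}+\tfrac1{2\beta}\|A^\top y+c-s\|^2
\]
plus terms independent of $s$. This is a projection-type problem, and its minimizer is $s=\Pi_{\mathcal K^*}(A^\top y+c-\beta\xsol)$. That map is continuous in $(y,\xsol)$, so it is bounded over $y,\xsol$ in the ball. I therefore let $B$ contain all of these points together with the ball itself. Evaluating \eqref{eq: d_Lag_Q_conic} at the minimizing $v$ and using
\[
\dist((y,v),\mathcal D_\star)\ \ge\ \dist(y,\ysset)
\]
gives the dual half with $\alpha_D=2\kappa_2$. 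Finally, Theorem \ref{thm:equivalence} yields (ii) and (iii); the radius for metric subregularity is whatever the theorem produces, possibly smaller than $r$.

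The main obstacle is a mismatch in quantifiers. Proposition \ref{prop:uqg} gives constants depending on a compact set $B$ that is fixed in advance, whereas the reduced functions involve a minimum over unbounded $u,v$. This is why the bound on the minimizing $v$ is the step that needs care, while the primal side is trivial. I would handle it through the explicit projection formula above, using continuity and nonexpansiveness of $\Pi_{\mathcal K^*}$. A secondary check is that the point of $L^D_\beta$ in \eqref{eq: d_Lag_Q_conic} matches the reduced value $L_\beta^D(\ysol,\xsol)$. This holds because $v=-A^\top\ysol$ gives $c-v=\ssol$, and since $\inprd{\xsol}{\ssol}=0$ this value attains the minimum.
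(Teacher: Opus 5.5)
Your proposal is correct and follows the same route as the paper: Proposition \ref{prop:uqg} supplies condition (i) of Theorem \ref{thm:equivalence}, and the theorem then yields (ii) and (iii). The paper states this in one line. Your explicit passage from the full to the reduced augmented Lagrangians fills in details the paper leaves implicit. Those details are forcing $u=b$ on the primal side, and on the dual side using $c-v=\Pi_{\mathcal K^*}(A^{\top}y+c-\beta\xsol)$ to bound the minimizer so that one compact set works for all nearby optimal pairs.
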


\subsection{Proof of Theorem \ref{thm:equivalence}}

In this section, we prove  Theorem~\ref{thm:equivalence}. We start with the following proposition.

\begin{prop}\label{prop: gap-computation}
Suppose $\beta>0$, ${\zsol}_0\in \zsset$ and $r>0$. Let $\zzref=(\xref,\yref)\in \mathcal Z\cap B_r({\zsol}_0)$. Then it holds for any $z\in\mathcal Z\cap B_r({\zsol}_0)$ that
\begin{equation*}
    G_{\beta}(z;\zzref) = L_{\beta}^P(x,\yref) + L_{\beta}^D(y,\xref) \ .
\end{equation*}
where $L_{\beta}^P(x,\yref)$ and $L_{\beta}^D(y,\xref)$ are {reduced} augmented Lagrangian of primal problem \eqref{eq: p} and dual problem \eqref{eq: d} respectively and more specifically,
\begin{equation*}
    \begin{aligned}
        L_{\beta}^P(x,\yref) := \min_u\ L_{\beta}^P(x,u,\yref)
        \quad \text{and}\quad 
        L_{\beta}^D(y,\xref) := \min_v\ L_{\beta}^D(y,v,\xref)
    \end{aligned}
\end{equation*}
where $L_{\beta}^P(x,u,\yref)$ and $L_{\beta}^D(y,v,\xref)$ are defined in \eqref{eq: AL_gen}.
\end{prop}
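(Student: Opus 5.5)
The plan is a direct computation. The maximization defining $G_\beta(z;\zzref)$ splits into one problem in $y'$ and one in $x'$, and each is identified with a reduced augmented Lagrangian by the conjugate-of-a-sum / infimal-convolution formula.

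First, I rewrite $L$ in the notation of Section~\ref{sec:equiv}. Since $g=\iota_{\{b\}}$, we have $g^*(y)=\inprd{b}{y}$, and so
\[
L(x,y)=f(x)+\inprd{Ax}{y}-g^*(y).
\]
The quadratic $\frac{\beta}{2}\|z'-\zzref\|^2$ splits as $\frac{\beta}{2}\|x'-\xref\|^2+\frac{\beta}{2}\|y'-\yref\|^2$. Hence
\[
G_\beta(z;\zzref)=\Big[f(x)+\sup_{y'}\big\{\inprd{Ax}{y'}-g^*(y')-\tfrac{\beta}{2}\|y'-\yref\|^2\big\}\Big]+\Big[g^*(y)+\sup_{x'}\big\{\inprd{-A^{\top}y}{x'}-f(x')-\tfrac{\beta}{2}\|x'-\xref\|^2\big\}\Big].
\]
I use the usual convention that $+\infty$ terms propagate, so points with $x\notin\mathcal K$ give $+\infty$ on both sides.

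Second, I treat the $y'$-bracket. Put $h(p):=\sup_{y'}\{\inprd{p}{y'}-\tfrac{\beta}{2}\|y'-\yref\|^2\}$; a direct computation gives $h(p)=\inprd{p}{\yref}+\tfrac{1}{2\beta}\|p\|^2$. The supremum over $y'$ is the conjugate $(g^*+h^*)^*(Ax)$, where $h^*(y')=\tfrac{\beta}{2}\|y'-\yref\|^2$. Since $g$ is proper, closed and convex ($g^{**}=g$) and $h$ is finite and continuous everywhere, the standard conjugate-of-a-sum theorem (Fenchel--Rockafellar, e.g.\ Rockafellar, Thm.~16.4 / Bauschke--Combettes Prop.~15.2) applies. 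It gives
\[
(g^*+h^*)^*=g\,\square\,h,
\]
and the infimum in the infimal convolution is attained. Therefore the $y'$-bracket equals
\[
f(x)+\min_u\big\{g(u)+\inprd{\yref}{Ax-u}+\tfrac{1}{2\beta}\|Ax-u\|^2\big\}=L^P_\beta(x,\yref).
\]

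Third, the $x'$-bracket is handled symmetrically. Take $f$ (proper, closed, convex, with $f^{**}=f$) and $k(q)=\inprd{q}{\xref}+\tfrac{1}{2\beta}\|q\|^2$, whose conjugate is $k^*(x')=\tfrac{\beta}{2}\|x'-\xref\|^2$. The same theorem identifies the supremum over $x'$ with
\[
(f^{**}+k^*)^*(-A^{\top}y)=\min_v\big\{f^*(v)+k(-A^{\top}y-v)\big\}.
\]
Expanding $k$, this is $\min_v\{f^*(v)+\inprd{\xref}{-A^{\top}y-v}+\tfrac{1}{2\beta}\|A^{\top}y+v\|^2\}$. Adding $g^*(y)$ gives exactly $L^D_\beta(y,\xref)$. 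Summing the two brackets yields the claim.

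The only delicate point is justifying exchange of the conjugate of the sum with the infimal convolution, together with attainment of the $\min$ in the reduced augmented Lagrangians. This holds because one summand in each case is a finite, everywhere-continuous quadratic. The localization $z,\zzref\in B_r(\zsol_0)$ plays no role in this identity, so it holds for all $z$ and $\zzref$.
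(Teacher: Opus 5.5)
Your proof is correct and follows essentially the same route as the paper: split the maximization into the $y'$- and $x'$-parts and identify each with a reduced augmented Lagrangian via Fenchel duality. The paper phrases this as a min--max exchange after writing $g^*$ as a supremum, while you phrase it as the conjugate-of-a-sum (infimal convolution) formula with attainment; you also write out the $x'$-part and the attainment of the minima, which the paper covers only by ``similarly.''
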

\begin{proof}
Note that by definition, we have
    \begin{equation*}
    \begin{aligned}
        G_{\beta}(z; \zzref) &= \max_{z' \in \mathcal{Z}}\ L(x, y') - L(x', y) - \frac{\beta}{2} \|x' - \xref\|_2^2 - \frac{\beta}{2} \|y' - \yref\|_2^2\\
        &=\underbrace{\max_{y' \in \mathcal{Y}}\ L(x, y')- \frac{\beta}{2} \|y' - \yref\|_2^2}_{\mathrm{\bf (I)}} \ +\ \underbrace{\max_{x' \in \mathcal{X}}\ - L(x', y) - \frac{\beta}{2} \|x' - \xref\|_2^2}_{\mathrm{\bf (II)}} \ .
    \end{aligned} 
    \end{equation*}
    It holds for the first term that
    \begin{equation}\label{eq:smoothed-gap-I}
        \begin{aligned}
            {\mathrm{\bf (I)}} &=\ \max_{y' \in \mathcal{Y}}\ L(x, y')- \frac{\beta}{2} \|y' - \yref\|_2^2 = \max_{y' \in \mathcal{Y}}\ \left[f(x)+{y'}^\top Ax-g^*(y')\right]- \frac{\beta}{2} \|y' - \yref\|_2^2\\
            &= \ \max_{y' \in \mathcal{Y}}\ \left[f(x)+\inprd{y'}{Ax}-\max_u \pran{\inprd{u}{y'}-g(u)}\right]- \frac{\beta}{2} \|y' - \yref\|_2^2\\
            &\overset{(a)}{=} \ \min_u\ f(x)+g(u) +\sup_{y' \in \mathcal{Y}}\ \inprd{y'}{Ax-u}- \frac{\beta}{2} \|y' - \yref\|_2^2\\
            &= \ \min_u\ f(x)+g(u) +\inprd{\yref}{Ax-u}+ \frac{1}{2\beta} \|Ax-u\|_2^2 = \min_u\ L_\beta^P(x,u,\yref)
        \end{aligned}
    \end{equation}
    where the supremum is attained at $y'=\yref+\frac{1}{\beta}(Ax-u)$. Here, the step (a) is due to Fenchel duality, and the constraint qualification is satisfied as quadratic functions have the whole domain, and $g$ is proper, closed, and convex. 

    Similarly, we can rewrite the second term as
    \begin{equation}\label{eq:smoothed-gap-II}
        \begin{aligned}
            {\mathrm{\bf (II)}}&=\max_{x' \in \mathcal{X}}\ - L(x', y) - \frac{\beta}{2} \|x' - \xref\|_2^2 =\min_v\ f^*(v)+g^*(y)+\inprd{\xref}{-A^{\top} y-v}+\frac{1}{2\beta}\|A^{\top} y+v\|_2^2\\
            & =\min_v\ L_\beta^D(y,v,\xref) \ .
        \end{aligned}
    \end{equation}
   Combining \eqref{eq:smoothed-gap-I} and \eqref{eq:smoothed-gap-II}, we finish the proof.
\end{proof}

\begin{proof}[Proof of Theorem \ref{thm:equivalence}]
    \
    
    {\bf (a) Equivalence between uniform QG of AL functions and QG of localized smoothed gap.} This follows directly from definitions of AL functions and localized smoothed gap, combined with Proposition \ref{prop: gap-computation} with $\zzref = \zsol$.

    {\bf (b) QG of localized smoothed gap implies metric subregularity.} Without loss of generality, we assume $\beta<2\alpha(\beta)$ due to the monotonicity of $G_{\beta}(z;{\zsol})$ on $\beta$. Also, we may assume without loss of generality that $\partial f(x)$ and $\partial g^*(y)$ are both nonempty.  Otherwise, metric subregularity trivially holds. 
Denote ${\tzsol}=\operatorname{proj}_{\zsset}(z)$. Thus we have ${\tzsol}\in B_{r}({\zsol}_0)$ due to nonexpansiveness $\|{\tzsol}-{\zsol}_0\|_2\leq \|z-{\zsol}_0\|_2\leq r$. Note that for any $z' \in \mathcal{Z}$, using $2\twonorm{a+b}^2 + 2\twonorm{-b}^2\geq \twonorm{a}^2$ for any $a,b \in \mathcal{Z}$, we have 
\begin{equation}\label{eq: norm-sq}
\|z'-\tzsol\|_2 ^2
= \|z'-z +z-\tzsol\|_2^2 \geq \frac{1}{2}\|z'-z\|_2^2-\|z-\tzsol\|_2^2 \ ,
\end{equation}
and thus for any $z\in \mathcal Z\cap B_{r}({\zsol}_0)$,
    \begin{equation*}
    \begin{aligned}
        G_\beta(z;{\tzsol}) &= \sup_{z' \in \mathcal{Z}}\ L(x, y') - L(x', y) - \frac{\beta}{2} \|z' - {\tzsol}\|_2^2\\
        & \leq \sup_{z' \in \mathcal{Z}}\ L(x, y') - L(x', y) - \frac{\beta}{4} \|z' - z\|_2^2 + \frac{\beta}{2}\|z-{\tzsol}\|_2^2\\
        &= G_{\beta/2}(z;z) + \frac{\beta}{2}\dist_2(z,\zsset)^2 \ ,
    \end{aligned}
    \end{equation*}
where the inequality uses \eqref{eq: norm-sq} and the last equality comes from the definition of smoothed gap and ${\tzsol}$. 

Let $x^{+} = \operatorname{prox}_{{2 f}/{\beta}}\!\Bigl(x - \tfrac{2}{\beta}A^{\top}y\Bigr)$  and $y^+=\prox_{2g^*/\beta }\pran{y+ \frac{2}{\beta}Ax}$. Let $w_f\in \partial f(x)$ and $w_{g^*}\in\partial g^*(y)$ be elements in subdifferential $\partial f(x)$ and $\partial g^*(y)$ respectively. Note that
    \begin{equation*}
    \begin{aligned}
        G_{\beta/2}(z;z)&= \ f(x)+\inprd{y^+}{Ax}-g^*(y^+)-\frac{\beta}{4}\|y-y^+\|_2^2 - f(x^+)-\inprd{y}{Ax^+}+g^*(y)-\frac{\beta}{4}\|x-x^+\|_2^2 \\
        & \leq \langle w_f + A^{\top} y, x-x^+\rangle -\frac{\beta}{4}\|x-x^+\|_2^2 + \langle w_{g^*}-Ax,y-y^+\rangle - \frac{\beta}{4}\|y-y^+\|_2^2\\
        & \leq \| w_f + A^{\top} y\|_2 \| x-x^+\|_2-\frac{\beta}{4}\|x-x^+\|_2^2 + \| w_{g^*}-Ax\|_2 \|y-y^+\|_2 - \frac{\beta}{4}\|y-y^+\|_2^2\\
        & \leq \frac{1}{\beta}\|w_f + A^{\top} y\|_2^2 + \frac{1}{\beta}\|w_{g^*}-Ax\|_2^2
    \end{aligned}
    \end{equation*}
where the first equality follows from Proposition \ref{prop: gap-computation} with $\zzref=z$. The last one comes from $c_1t-\frac{c_0}{2}t^2\leq \frac{c_1^2}{2c_0}$ for any $t$ and positive $c_0$.

Thus by \eqref{eq:eq-qg-gap}, we have for any $z\in \mathcal Z\cap B_{r}({\zsol}_0)$ that
    \begin{equation*}
        \alpha\dist_2(z,\zsset)^2\leq G_\beta(z;{\tzsol}) \leq \frac{1}{\beta}\pran{\dist_2(0,\partial f(x)+A^{\top} y)^2+\dist_2(0,\partial g^*(y)-Ax)^2} +\frac{\beta}{2}\dist_2(z, \zsset)^2\ .
    \end{equation*}
Note that $\beta <2\alpha$ and metric subregularity holds with $\gamma=\frac{1}{\sqrt{\beta(\alpha-\beta/2)}}$ for any $z\in \mathcal Z\cap B_r({\zsol}_0)$:
    \begin{equation*}
        \dist_2(z,\zsset) \leq \frac{1}{\sqrt{\beta(\alpha-\beta/2)}}\dist_2(0,\mathcal F(z)) \ .
    \end{equation*}

    {\bf (c) Metric subregularity implies QG of localized smoothed gap.} 
    Denote $\bar u=\prox_{\beta g}\pran{\ysol+\frac{1}{\beta}Ax}$. and $y^+=\prox_{g^*/\beta }\pran{\ysol+ \frac{1}{\beta}Ax}\overset{(a)}{=}\ysol+ \frac{1}{\beta}Ax-\frac{1}{\beta}\bar u$, where the equality is due to Moreau identity. We have the following:
    \begin{equation}
        \begin{aligned}
            \partial_x L_{\beta}^P(x,\ysol)& = \partial_x \pran{\min_u \ f(x)+g(u) +\inprd{\ysol}{Ax-u}+ \frac{1}{2\beta} \|Ax-u\|_2^2}\\
            &\overset{(a)}{=}\partial f(x)+A^{\top} \ysol+\frac{1}{\beta}A^{\top} (Ax-\bar u)\\
            &= \partial f(x)+A^{\top} \pran{\pran{\ysol+ \frac{1}{\beta}Ax}-\frac{1}{\beta}\bar u}\\
            &= \partial f(x)+A^{\top} y^+.
        \end{aligned}
    \end{equation}
     In the step $(a)$, we use the fact that the Moreau envelope of $\beta g$ is smooth and convex. Hence, there is a unique gradient of the Moreau envelope of $\beta g$ at $\ysol +\frac{1}{\beta}Ax$ and the sum rule of subdifferential applies. 

    Recall ${y}^+=\prox_{g^*/\beta }\pran{\ysol+ \frac{1}{\beta}Ax}$. Note that
    \begin{equation*}
        \begin{aligned}
            &\ \ysol+\frac{1}{\beta}Ax\in y^++\frac{1}{\beta}\partial g^*(y^+)\quad
            \Rightarrow \quad \beta(\ysol-{y}^+)\in \partial g^*(y^+) - Ax \ ,
        \end{aligned}
    \end{equation*}
    Denote $z=(x,\ysol)$ and $z^+=(x,y^+)$
    \begin{equation*}
        \begin{aligned}
            \gamma \dist_2(x,\xsset)&=\gamma \dist_2(z,\zsset)\leq \gamma \|z-z^+\|_2+\gamma \dist_2(z^+,\zsset)=\gamma \|\ysol-y^+\|_2+\gamma \dist_2(z^+,\zsset)\\
            & \leq \gamma \|\ysol-y^+\|_2+\dist_2\pran{0,\begin{pmatrix}
                \partial f(x)+A^{\top} y^+ \\ \partial g^*(y^+)-Ax
            \end{pmatrix}}\\
            & \leq \gamma \|\ysol-y^+\|_2+\dist_2\pran{0,\begin{pmatrix}
                \partial f(x)+A^{\top} y^+ \\ \beta(\ysol - y^+)
            \end{pmatrix}}\\
            &\leq \gamma \|\ysol-y^+\|_2+ \pran{\dist_2\pran{0,
            \partial f(x)+A^{\top} y^+}+\beta\|\ysol-y^+\|_2}\\
            & \overset{(a)}{\leq} \dist_2\pran{0,
            \partial_x L_{\beta}^P(x,\ysol)
        }+\sqrt{\frac{2(\beta+\gamma)}{\beta}}\sqrt{L_{\beta}^P(x,\ysol)-L_{\beta}^P(\xsol,\ysol)}
        \end{aligned}
    \end{equation*}
Here, in the step $(a)$, we use the following fact:
    \begin{equation}
    \begin{aligned}
        L_\beta^P(x,\ysol) &= 
        \ L(x, y^+)- \frac{\beta}{2} \|y^+- \ysol\|_2^2 =
        \max_{y' \in \mathcal{Y}}\ L(x, y')- \frac{\beta}{2} \|y' - \ysol\|_2^2 \\
        & \overset{(i)}{\geq} L(x, \ysol)- \frac{\beta}{2} \|\ysol - \ysol\|_2^2+\frac{\beta}{2}\|y^+-\ysol\|_2^2 \overset{(ii)}{\ge} L(\xsol, \ysol)+\frac{\beta}{2}\|y^+-\ysol\|_2^2 \\
        & = L_\beta^P(\xsol, \ysol)+\frac{\beta}{2}\|y^+-\ysol\|_2^2
        \end{aligned}
    \end{equation}
    (i) is strong convexity and (ii) is due to saddle point property of $L$.

    Moreover, denote $v\in \partial_x L_{\beta}^P(x,\ysol)$ and $\txsol\in \xsset$ and by convexity,
\begin{equation*}
    \begin{aligned}
        {L_{\beta}^P(x,\ysol)-L_{\beta}^P(\xsol,\ysol)}\leq \langle v,x- \txsol\rangle\leq \|v\|\|x-\txsol\| \ .
    \end{aligned}
\end{equation*}
Thus due to the arbitrariness of $v$ and $\txsol$, we have
\begin{equation*}
    \begin{aligned}
        {L_{\beta}^P(x,\ysol)-L_{\beta}^P(\xsol,\ysol)}\leq \dist_2(0,\partial_x L_{\beta}^P(x,\ysol))\dist_2(x,\xsset) \ ,
    \end{aligned}
\end{equation*}
and therefore
\begin{equation}\label{eq:ms-al}
    \begin{aligned}
        \gamma\dist_2(x,\xsset)&\leq \dist(0,\partial_x L_{\beta}^P(x,\ysol))+\sqrt{\frac{2(\beta+\gamma)}{\beta}}\sqrt{\dist_2(0,\partial_x L_{\beta}^P(x,\ysol))\dist_2(x, \xsset)} \\
        & \leq \sqrt{\frac{2(\beta+\gamma)}{\beta}}\pran{\dist(0,\partial_x L_{\beta}^P(x,\ysol))+\sqrt{\dist_2(0,\partial_x L_{\beta}^P(x,\ysol))\dist_2(x, \xsset)}} \ .
    \end{aligned}
\end{equation}

Denote $g:=\dist_2(0,\partial_x L_{\beta}^P(x,\ysol))$,  $\delta:=\dist_2(x,\xsset)$, $\xi:=\sqrt{\frac{g}{\delta}}>0$ and $\mu:=\gamma/\sqrt{\tfrac{2(\beta+\gamma)}{\beta}}$. We have from \eqref{eq:ms-al} that
\begin{equation*}
    \begin{aligned}
        \mu\delta\leq g+\sqrt{g\delta} \quad \Rightarrow \quad \xi^2+\xi-\mu\geq 0\ , 
    \end{aligned}
\end{equation*}
Thus $\xi \geq \frac{-1+\sqrt{1+4\mu}}{2}$ and it holds that
\begin{equation*}
     \dist_2(0,\partial_x L_{\beta}^P(x,\ysol))=g\geq \pran{\frac{-1+\sqrt{1+4\mu}}{2}}^2 \delta=\pran{\frac{-1+\sqrt{1+4\mu}}{2}}^2\dist_2(x,\xsset) \ ,
\end{equation*}
which means $L_\beta^P(x,\ysol)$ satisfies metric subregularity, and thus quadratic growth of $L_\beta^P(x,\ysol)$~\cite{drusvyatskiy2013second,drusvyatskiy2015quadratic,drusvyatskiy2018error}. Similarly, we have quadratic growth of $L_\beta^D(y,\xsol)$. Combining quadratic growth of $L_\beta^P(x,\ysol)$ and $L_\beta^D(y,\xsol)$ with Proposition \ref{prop: gap-computation}, we achieve the quadratic growth of smoothed gap.
\end{proof}

\subsection{Necessity of conditions in Corollary \ref{cor: sc-ms}}

The following examples illustrate why the assumptions in Corollary \ref{cor: sc-ms} are essential for deriving metric subregularity from strict complementarity and facial growth conditions.

\subsubsection{Necessity of strict complementarity pair for metric subregularity}
Consider SDP with $C=0$, $A_1 = \diag(1,0)$, $A_2 = \diag(0,1)$, and $b = (1,0)$. Note for SDP, the complementary quadratic facial violation is satisfied for free. 
The optimal primal solution is uniquely $\diag(1,0)$ and the set of optimal dual solutions contains $0$. One can verify that for $(X_\epsilon,0)$ with $X_\epsilon = \begin{bmatrix}
    1 & \epsilon \\ 
    \epsilon & \epsilon^2
\end{bmatrix}$, the metric subregularity of 
$\mathcal{F}$ is not satisfied for all small $\epsilon$. 
However, metric subregularity holds around $\xsol = \diag(1,0)$ and $\ysol = (0,1)$.

\subsubsection{Necessity of quadratic facial violation for metric subregularity}
Consider a power cone program with any $p>\frac{1}{2}$,  $c=\begin{bmatrix}
    0 \\ 1 \\ 0
\end{bmatrix}$, $A=\begin{bmatrix}
    0 & 0 & 0
\end{bmatrix}$, and $b= 0$. Using \cite[Proposition 3.3]{lin2024generalized}, the primal solution set is the following face: 
\begin{equation}
    \mathcal{X}_\star = \face_{s_\star} = \{(x,0,0)\mid x\geq 0\}
\end{equation}
where the defining vector and the dual optimal slack $s_\star = (0,1,0)$, and the dual complementary face is 
\begin{equation}
     \mathcal{S}_\star = \face_{\xsol}^* = \{(0,y,0)\mid y\geq 0\}.
\end{equation}
where $\xsol = (1,0,0)$. Note that strict complementarity is satisfied for the pair $(\xsol,\ysol)$ where $\ysol =0$ and $\ssol = c-A^{\top} \ysol=(0,1,0)$. 
We show that for this example, neither the quadratic facial violation nor the metric subregularity holds.

Consider the point $(x_\epsilon,0)$ where $x_\epsilon = (1,\epsilon, \epsilon^{1-p})$. The normal cone at $x_\epsilon$ is given by 
$N_\mathcal{K}(x_\epsilon) = 
\{t(p, \frac{1-p}{\epsilon},-\epsilon^{p-1})\mid t\geq 0\}$. Let $s_0 = (p, \frac{1-p}{\epsilon},-\epsilon^{p-1})$. Since $\inprd{s}{s_\star} = (1-p)/\epsilon>0$, we find that 
\begin{equation}
    \dist_2^2(0, \mathcal{F}(x_\epsilon,0)) = 
    \twonorm{\ssol}^2 - \frac{\inprd{s_0}{\ssol}^2}{\twonorm{\ssol}^2} = 
    \frac{p^2 \epsilon^2 + \epsilon^{2p}}{p^2 \epsilon^2 + (1-p)^2 + \epsilon^{2p}}.
\end{equation}
Thus, $\dist_2^2(0, \mathcal{F}(x_\epsilon,0))$ is on the order of $\epsilon^{2p}$ given $p>\frac{1}{2}$. On the other hand, we have 
\begin{equation}
    \dist_2^2 ((x_\epsilon,0),\mathcal{Z}_\star) = \dist_2 ^2 ( x_\epsilon, \face_{\ssol}) = \epsilon^2 + \epsilon^{2-2p}.
\end{equation}
Hence, $\dist_2^2 ((x_\epsilon,0),\mathcal{Z}_\star)$ is on the order $\epsilon^{2-2p}$. Combining the above expressions for $\dist_2^2 ((x_\epsilon,0),\mathcal{Z}_\star)$  and 
$\dist_2^2(0, \mathcal{F}(x_\epsilon,0))$, we see metric subregularity does not hold for $p>\frac{1}{2}$, no matter how small $\epsilon$ is. Moreover, the inner product $\inprd{\ssol}{x_\epsilon} = \epsilon$. Hence, the quadratic facial violation condition also fails.

\section{Local linear convergence on \eqref{eq: pd} and acceleration}\label{sec:linear}

This section connects the regularity theory developed above with algorithmic convergence. Using the metric subregularity obtained from strict complementarity and quadratic facial violation, we prove local linear convergence of PDHG and ADMM for solving \eqref{eq: pd}, described in detail below. 
Our proof is modular: using a unified operator-splitting form and metric subregularity, we derive Proposition \ref{thm:linear-last} in eight lines: the local linear rate for the last iterates. We then combine this proposition with Corollary \ref{cor: sc-ms} to derive the local linear rate under strict complementarity and QFV. Lastly, we further show how restart and Halpern acceleration improve the local rate dependence, based on our unified operator-splitting form.

{\bf PDHG and ADMM.} 
Given a convex closed set $C$, we denote the projection to $C$ as $\operatorname{proj}_C(x) = \arg\min_w \{\twonorm{x-w}\mid w\in C\}$. We describe two popular operator splitting methods for solving conic programming, thanks to their potentially low per-iteration cost:
\begin{itemize}
    \item PDHG: 
     Primal-dual hybrid gradient method (PDHG) has the update rule as follows when solving \eqref{eq: pd}:
    \begin{equation*}
        \begin{aligned}
            & x^+ \gets \operatorname{prox}_{\eta f}\pran{x+\eta A^{\top} y}\\
            & y^+ \gets \operatorname{prox}_{\eta g^*}\pran{y-\eta(2Ax^+-Ax)}
        \end{aligned}
    \end{equation*}
    where $\eta$ is the primal and dual stepsize typically chosen as $\eta<\frac{1}{\|A\|_2}$.
    \item ADMM: The alternating direction method of multipliers (ADMM) 
    when solving the dual problem \eqref{eq: d} is given as:
    \begin{equation*}
        \begin{aligned}
            x^{+}
            &\gets x-\rho\left(v^{+}+A^{\top}y\right),
            \;\text{where  $v^{+}
            \gets \operatorname*{arg\,min}_{v}
            \left\{
            f^{*}(v)-\langle x,v\rangle
            +\frac{\rho}{2}\left\|v+A^{\top}y\right\|_{2}^{2}
            \right\},$}
            \\[0.5em]
            y^{+}
            &\gets \operatorname*{arg\,min}_{u}
            \left\{
            g^{*}(u)-\langle Ax^{+},u\rangle
            +\frac{\rho}{2}\left\|v^{+}+A^{\top}u\right\|_{2}^{2}
            \right\}.
            \end{aligned}
    \end{equation*}
    where $\rho>0$ is the penalty parameter of ADMM. 
\end{itemize}

Note that we impose the following assumption on the surjectivity of the linear map $A$ to ensure convergence of the dual variable $y$:
    \begin{ass}\label{ass:full-rank}
    The linear map $A$ is surjective.
    \end{ass}
    
This assumption is commonly adopted in the ADMM literature, as it guarantees well-posedness of the dual update and facilitates convergence analysis. We make Assumption \ref{ass:full-rank} throughout this section, but this assumption is not required for the analysis of PDHG. 

\begin{thm}[Local linear convergence on solving \eqref{eq: pd}]\label{thm: local-linear}
Instate the Assumption \ref{ass:full-rank} and the assumption of Proposition \ref{prop:uqg}, i.e.,
suppose \eqref{eq: p} and \eqref{eq: d} admits a strict complementarity pair ${\zsol}_{0} = ({\xsol}_0,{\ysol}_0)$ and the corresponding complementary faces 
$\face_{{\xsol}_0}$ and $\face_{{\ssol}_0}$, where ${\ssol}_0 = c+A^{\top} {\ysol}_0$, admit quadratic facial violation. Consider $z^k=(x^k,y^k)$ from either ADMM or PDHG. Then there are $r,\tilde{r}>0$ and $q\in (0,1)$, such that for any $z^0= (x^0,y^0) \in B_r(z_{\star 0})$, the following inequality is satisfied 
  \begin{equation*}
    \mathrm{dist}_2(z^k,\zsset) \leq q^k \tilde{r}.
\end{equation*}
\end{thm}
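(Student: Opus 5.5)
The plan is to combine Corollary \ref{cor: sc-ms} with the standard theory of firmly nonexpansive or averaged fixed-point iterations under metric subregularity. By Corollary \ref{cor: sc-ms}, there are $r_0>0$ and $\gamma>0$ such that $\gamma\dist_2(z,\zsset)\le \dist_2(0,\mathcal F(z))$ for all $z\in B_{r_0}({\zsol}_0)$.

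First, I will write both methods in the unified operator form of \cite{lu2023unified}:
\[
0\in \mathcal F(z^{k+1}) + P(z^{k+1}-z^k),
\]
where $P$ is a positive semidefinite preconditioner. For PDHG, $P=\begin{bmatrix}\frac1\eta I & -A^{\top}\\ -A & \frac1\eta I\end{bmatrix}$, which is positive definite because $\eta\|A\|<1$. For ADMM on \eqref{eq: d}, I use the variables $(x,y)$ with $P=\begin{bmatrix}\rho I&0\\0&\frac1\rho AA^{\top}\end{bmatrix}$ (up to the precise bookkeeping of the $v$ variable). This $P$ is positive definite thanks to Assumption \ref{ass:full-rank}. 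The reason for needing surjectivity is that $\|y\|_P$ must control $\|y\|$.

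From this form I obtain the two standard consequences:
\begin{enumerate}
\item[(a)] Fejér monotonicity in the $P$-norm, $\|z^{k+1}-\zsol\|_P^2\le \|z^k-\zsol\|_P^2-\|z^{k+1}-z^k\|_P^2$ for every $\zsol\in\zsset$, which follows from the monotonicity of $\mathcal F$.
\item[(b)] The residual bound $\dist_2(0,\mathcal F(z^{k+1}))\le \|P\|^{1/2}\|z^{k+1}-z^k\|_P$.
\end{enumerate}

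Second, I localize. Choose $r$ small, with $r\le r_0/C$ where $C=\sqrt{\lambdamax(P)/\lambdamin(P)}$. Fejér monotonicity with respect to ${\zsol}_0$ gives $\|z^k-{\zsol}_0\|\le C\|z^0-{\zsol}_0\|\le r_0$ for all $k$, so every iterate stays in the ball where metric subregularity holds. This is the step I expect to require the most care. The subregularity region is a fixed ball around ${\zsol}_0$ and not a neighborhood of all of $\zsset$, and the constant $\gamma$ must remain valid along the entire trajectory. Anchoring Fejér monotonicity at ${\zsol}_0$ rather than at the projection handles this, and this is exactly why the uniformity in Proposition \ref{prop:uqg} and Theorem \ref{thm:equivalence} was needed.

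Third, I derive the contraction. Let $\bar z^k=\operatorname{proj}^P_{\zsset}(z^k)$ be the projection in the $P$-norm. Applying (a) with $\zsol=\bar z^k$ gives $\dist_P^2(z^{k+1},\zsset)\le \dist_P^2(z^k,\zsset)-\|z^{k+1}-z^k\|_P^2$. Combining (b) with metric subregularity at $z^{k+1}$ gives
\[
\|z^{k+1}-z^k\|_P^2\ge \frac{\gamma^2}{\|P\|}\dist_2^2(z^{k+1},\zsset)\ge \frac{\gamma^2}{\|P\|^2}\dist_P^2(z^{k+1},\zsset).
\]
Hence $\dist_P^2(z^{k+1},\zsset)\le (1+\gamma^2/\|P\|^2)^{-1}\dist_P^2(z^k,\zsset)$. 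Converting back to the Euclidean norm yields the claim with $q=(1+\gamma^2/\|P\|^2)^{-1/2}$ and $\tilde r=C r$.

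For ADMM, one additional check is needed: the iterates $(x^k,y^k)$ must be exactly those produced by the proximal-point form. If the natural form instead involves the auxiliary variables $(x,v)$, I would run the argument on the ADMM fixed-point sequence (Douglas–Rachford) and then transfer the distance bound to $(x^k,y^k)$ using Assumption \ref{ass:full-rank}, via $\|y-\ysol\|\le \lambdamin(AA^{\top})^{-1/2}\|A^{\top}(y-\ysol)\|$.
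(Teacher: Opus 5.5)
Your PDHG argument is correct and follows the paper's architecture: Corollary~\ref{cor: sc-ms} for subregularity on a ball, the unified form $P(z^k-z^{k+1})\in\mathcal F(z^{k+1})$, and localization by Fej\'er monotonicity anchored at the reference solution, as in Appendix~\ref{app:prove-local}. The rate step is where you differ. The paper applies subregularity at $z^k$ and then uses the last-iterate bound $\|z^k-z^{k-1}\|_P\le K^{-1/2}\operatorname{dist}_P(z^{k-K},\mathcal Z_\star)$ over blocks of $K=\lceil e^2\lambda_{\max}^2(P)/\gamma^2\rceil$ iterations, gaining a factor $1/e$ per block. You instead obtain a per-iteration contraction of $\operatorname{dist}_P$ by combining Fej\'er monotonicity with subregularity at $z^{k+1}$. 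Both give $O(\kappa^2\log(1/\epsilon))$ with $\kappa=\lambda_{\max}(P)/\gamma$. Your version is more elementary, since it does not need monotonicity of $\|z^{k+1}-z^k\|_P$. The paper's template (subregularity plus a sublinear residual rate over an epoch) is the one that transfers verbatim to the restarted Halpern scheme, where only the $O(1/k)$ Halpern residual bound is available and there is no one-step Fej\'er decrease to use.

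The ADMM part has a concrete error that breaks two of your steps. With $z=(x,y)$ as in the paper's ADMM, use $v^{k+1}=\frac1\rho(x^k-x^{k+1})-A^{\top}y^k\in\partial f(x^{k+1})$ together with the optimality condition of the $y$-subproblem. This gives exactly $P(z^k-z^{k+1})\in\mathcal F(z^{k+1})$ with $P=\begin{bmatrix}\frac1\rho I&-A^{\top}\\-A&\rho AA^{\top}\end{bmatrix}$. Its quadratic form is $\frac1\rho\|w_x-\rho A^{\top}w_y\|^2$, which vanishes on $\{(\rho A^{\top}w_y,w_y)\}$, so $P$ is singular whether or not $A$ is surjective. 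Your block-diagonal matrix is not the one the iterates satisfy.

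Hence $\lambda_{\min}(P)=0$, and two of your steps fail for ADMM. Your localization constant $C=\sqrt{\lambda_{\max}/\lambda_{\min}}$ is unavailable, because Fej\'er monotonicity in a seminorm does not by itself keep $(x^k,y^k)$ inside the subregularity ball. Your final passage from $\operatorname{dist}_P$ to $\operatorname{dist}_2$ also fails.

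The contraction step itself survives for positive semidefinite $P$ if you take infima over $\mathcal Z_\star$ instead of a $P$-projection. You can then return to the Euclidean distance through subregularity,
\[
\gamma\operatorname{dist}_2(z^{k+1},\mathcal Z_\star)\le\lambda_{\max}^{1/2}\|z^{k+1}-z^k\|_P\le\lambda_{\max}^{1/2}\operatorname{dist}_P(z^k,\mathcal Z_\star),
\]
which is how the paper's chain avoids needing $P\succ 0$.

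Localization must come from a separate argument. As in Appendix~\ref{app:prove-local}, use nonexpansiveness of the Douglas--Rachford sequence $h^k=x^k+\rho s^k$ and of the recovery map to bound $x^k$ and $s^k$. Then use surjectivity of $A$ to bound $y^k$, which yields $\|z^k-z_\star\|\le C_{\mathrm{ADMM}}\|z^0-z_\star\|$. Your fallback points in this direction. However, ``running the argument on the DR sequence'' would need a subregularity estimate for the DR fixed-point residual, which you have not derived. Use DR only for localization, and keep the contraction in $(x,y)$ with the singular $P$.
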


\subsection{Proof of Theorem \ref{thm: local-linear}}\label{sec:linear-last}
In this section, we prove our main result, Theorem~\ref{thm: local-linear}. We first establish the local linear convergence of the PDHG and ADMM iterates under metric subregularity. Combining this result with Corollary~\ref{cor: sc-ms}, we then prove that PDHG and ADMM converge locally at a linear rate when applied to the conic program~\eqref{eq: p}.

Metric subregularity has been widely used to show linear convergence of various first-order methods for solving minimization problems~\cite{zhou2017unified,ye2021variational,drusvyatskiy2018error}, and, more recently, linear convergence of primal-dual algorithms~\cite{yuan2020discerning,fercoq2022quadratic,lu2022infimal,lu2025overview}. Here, we present a unified, simple approach to proving the linear convergence of PDHG and ADMM under metric subregularity, complementing existing, potentially sophisticated proofs. First, note that according to \cite{lu2023unified}, PDHG and ADMM are special instances of the generic algorithm with positive semi-definite matrix $P$:
\begin{equation*}\label{eq: iteration}\tag{OSM}
    P(z^{k}-z^{k+1})\in \mathcal F(z^{k+1}) \ ,
\end{equation*}
where, noticing that $\eta$ is the stepsize of PDHG and $\rho$ is the penalty parameter of ADMM, we have
\begin{itemize}
    \item PDHG:
    $P=\begin{bmatrix}
        \frac{1}{\eta}I & -A^{\top} \\ -A & \frac{1}{\eta}I
    \end{bmatrix}$. This canonical matrix $P$ of PDHG is positive definite when stepsize $\eta<\frac{1}{\|A\|_2}$.
    \item ADMM:
    $P=\begin{bmatrix}
        \frac{1}{\rho}I & -A^{\top} \\ -A & \rho A A^{\top} 
    \end{bmatrix}$. This canonical matrix $P$ of ADMM is positive semi-definite for all penalty parameter $\rho>0$.
\end{itemize}

To facilitate our local convergence analysis, we introduce the following assumption:
\begin{ass}[Local initialization]\label{ass:local}
    There exist an optimal solution ${\zsol}_0$ satisfying strict complementarity, a radius $r>0$ and a constant $C>0$, such that given initial solution $z^0\in B_r({\zsol}_0)$, the iterates $\{z^k\}_{k=0}^\infty$ of \eqref{eq: iteration} satisfy
    \begin{enumerate}
        \item[(i)]  $z^k\in B_{\tilde r}({\zsol}_0)$ for any $k\geq 0$, where $\tilde r:=Cr$;
        \item[(ii)] $\{z^k\}_{k=0}^\infty$ converge to a strictly complementary solution.
    \end{enumerate}
\end{ass}
\begin{rem}
    Assumption~\ref{ass:local} essentially states that when the algorithm is initialized near a strictly complementary solution, its iterates remain close to the solution and eventually converge to a strictly complementary point. An alternative to this local initialization assumption is to instead assume that the iterates converge to a strictly complementary solution, as in~\cite{kang2025local}. We note that although our analysis is carried out under the local initialization assumption, all results remain valid under the assumption of convergence to a strictly complementary solution, with only minor modifications to the proofs.
\end{rem}
The following proposition shows that both PDHG and ADMM indeed satisfy Assumption \ref{ass:local}, whose proof is deferred to Appendix \ref{app:prove-local}.
\begin{prop}\label{prop:local}
    Let Assumption \ref{ass:full-rank} hold and consider iterates $\{z^k\}_{k=0}^\infty$ of PDHG or ADMM for solving \eqref{eq: pd}. Then Assumption \ref{ass:local} holds.
\end{prop}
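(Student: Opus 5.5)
Both items of Assumption~\ref{ass:local} will follow from the Fejér-type monotonicity of \eqref{eq: iteration} in the seminorm $\|\cdot\|_P$. For PDHG, $P\succ 0$ when $\eta<1/\|A\|_2$. For ADMM, $P$ is only positive semidefinite, and Assumption~\ref{ass:full-rank} is used to recover control of the $y$-block.

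\textbf{Step 1: Fejér monotonicity.} Let $\zsol\in\zsset$. Then $0\in\mathcal F(\zsol)$. Since $\mathcal F$ is the subdifferential of the convex-concave $L$, it is maximally monotone. Monotonicity of $\mathcal F$ applied to $P(z^k-z^{k+1})\in\mathcal F(z^{k+1})$ and $0\in\mathcal F(\zsol)$ gives $\langle P(z^k-z^{k+1}),z^{k+1}-\zsol\rangle\ge0$. Expanding, we obtain
\[
\|z^{k+1}-\zsol\|_P^2\le \|z^k-\zsol\|_P^2-\|z^k-z^{k+1}\|_P^2 .
\]

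\textbf{Step 2: item (i).} For PDHG, $\lambda_{\min}(P)\|w\|^2\le\|w\|_P^2\le\lambda_{\max}(P)\|w\|^2$. Take $\zsol={\zsol}_0$. Then $\|z^k-{\zsol}_0\|\le\sqrt{\kappa(P)}\,\|z^0-{\zsol}_0\|\le Cr$ with $C=\sqrt{\lambda_{\max}(P)/\lambda_{\min}(P)}$.

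For ADMM, $\|(x,y)\|_P^2=\|x/\sqrt\rho-\sqrt\rho A^\top y\|^2$. The key observation is that in ADMM the pair $(x^{k+1},y^{k+1})$ is determined by the previous iterate only through $w^k:=x^k-\rho A^\top y^k$. So I will run the argument on $w^k$. Step 1 gives $\|w^k-\wsol\|$ nonincreasing, where $\wsol=\xsol-\rho A^\top\ysol$. Next, $(x^{k+1},y^{k+1})$ is a Lipschitz function of $w^k$. This holds because $v^{k+1}$ is a proximal step applied to $w^k$, $x^{k+1}=w^k-\rho v^{k+1}$, and $y^{k+1}$ solves a strongly convex problem in $u$; strong convexity comes from $AA^\top\succ0$ under Assumption~\ref{ass:full-rank}. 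The solution ${\zsol}_0$ is a fixed point of this map. Hence
\[
\|z^{k+1}-{\zsol}_0\|\le L_{\mathrm{Lip}}\|w^k-w_{\star0}\|\le L_{\mathrm{Lip}}\|w^0-w_{\star0}\|\le L_{\mathrm{Lip}}(1+\rho\|A\|)r.
\]
This bounds every iterate from $k=1$ on. Together with $\|z^0-{\zsol}_0\|\le r$, item (i) holds with a suitable $C$.

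\textbf{Step 3: item (ii).} By Step 1, $\sum_k\|z^k-z^{k+1}\|_P^2<\infty$, so $P(z^k-z^{k+1})\to0$.

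For PDHG, this means $z^k-z^{k+1}\to0$. The iterates are bounded by Step 2, so a subsequence converges to some $\bar z$. Closedness of the graph of the maximally monotone $\mathcal F$ gives $0\in\mathcal F(\bar z)$. The Opial argument (monotone $\|\cdot-\bar z\|_P$) then yields convergence of the whole sequence to $\bar z\in\zsset$.

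For ADMM, the same argument applies to $w^k$. Afterwards, $z^{k+1}$ converges by the Lipschitz dependence on $w^k$, and the limit lies in $\zsset$.

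It remains to show that the limit $\bar z$ is strictly complementary. We have $\|\bar z-{\zsol}_0\|\le Cr$. By \eqref{eq: relative_interior} in the proof of Proposition~\ref{prop:uqg}, every optimal pair within distance $r_0$ of ${\zsol}_0$ keeps the same complementary faces and lies in their relative interiors. So it suffices to choose $r$ with $Cr<r_0$.

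\textbf{Main obstacle.} I expect the main obstacle to be the ADMM case. There $P$ is singular, and Fejér monotonicity controls only $w^k$, not $z^k$. The crucial point is to establish the Lipschitz reconstruction of $(x^{k+1},y^{k+1})$ from $w^k$, uniformly, using surjectivity of $A$. I would verify this first, by writing both ADMM subproblems as resolvents and checking their nonexpansiveness explicitly.
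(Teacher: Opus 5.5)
Your proposal is correct. For PDHG it matches the paper's argument: Fej\'er monotonicity in $\|\cdot\|_P$ plus norm equivalence, giving $C=\sqrt{\lambdamax(P)/\lambdamin(P)}$.

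For ADMM the routes differ. The paper leaves the unified form \eqref{eq: iteration}. It imports the Douglas--Rachford interpretation of ADMM (Eckstein--Yao) and uses nonexpansiveness of the governing sequence $h^k=x^k+\rho s^k$ and of a recovery map $h^k\mapsto x^k$. It then bounds $s^k$ and $y^k$ through the explicit $(AA^\top)^{-1}$ formula, which relies on $g^*$ being linear in the conic case. Finally it bounds $\|h^1-h^*\|$ by $\|z^0-z_\star\|$ by inspecting one step.

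You stay inside \eqref{eq: iteration}. The identity $\|z\|_P=\rho^{-1/2}\|x-\rho A^\top y\|$ shows that the singular seminorm measures exactly $w^k=x^k-\rho A^\top y^k$, the only quantity through which the displayed ADMM step depends on $z^k$. Fej\'er monotonicity of $w^k$ therefore follows directly from monotonicity of $\mathcal F$. The recovery $T\colon w^k\mapsto z^{k+1}$ is Lipschitz, because $x^{k+1}=\prox_{\rho f}(w^k)$ and $y^{k+1}=(\partial g^*+\rho AA^\top)^{-1}\bigl(A(2x^{k+1}-w^k)\bigr)$ with $AA^\top\succ0$.

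The mechanism is the same in both: a nonexpansive governing sequence plus a Lipschitz recovery, and indeed $w^k=x^{k+1}+\rho v^{k+1}$. Your version has three advantages: it needs no external equivalence theorem, it applies to the ADMM iteration exactly as displayed in Section~\ref{sec:linear}, and it carries over unchanged to the composite setting of Section~\ref{sec: generalization}.

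You also actually prove item (ii) via Opial's argument. The paper only notes that optimal points near $z_{\star 0}$ remain strictly complementary and takes convergence of PDHG/ADMM for granted.

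When you write out the ADMM cluster-point step, make the following explicit. If $w^{k_j}\to\bar w$, then $z^{k_j+1}=T(w^{k_j})\to\bar z$, and $\bar z\in\mathcal Z_\star$ by the closed graph of $\mathcal F$ together with $P(z^k-z^{k+1})\to0$. Also $w^{k_j+1}\to\bar w$, since $\|w^{k+1}-w^k\|=\sqrt\rho\,\|z^{k+1}-z^k\|_P\to0$. Hence $\bar w=\bar x-\rho A^\top\bar y$ is itself a Fej\'er anchor. This is the one place where the singularity of $P$ costs an extra line.
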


We now establish the linear convergence of the last iterates under the local initialization assumption (Assumption~\ref{ass:local}). Thanks to Proposition~\ref{prop:local}, this result applies directly to both PDHG and ADMM when solving problem~\eqref{eq: pd}.
\begin{prop}[Linear rate of last iterates]\label{thm:linear-last}
    Consider an iterate update rule \eqref{eq: iteration} with a positive semi-definite matrix $P$ for solving the primal-dual form \eqref{eq: pd} of conic program \eqref{eq: p}. The iterates $\{z^k=(x^k,y^k)\}_{k=0,...,\infty}$ are obtained from this iterate update rule and the initial solution $z^0=(x^0,y^0)$ is chosen to satisfy Assumption \ref{ass:local}, where ${\zsol}_0$, $r$ and $\tilde r:=Cr$ be corresponding constants in Assumption \ref{ass:local}. Suppose metric subregularity is satisfied for \eqref{eq: pd} with constant $\gamma>0$ on a region $B_{\tilde r}({\zsol}_0) \subseteq \mathcal{Z}$. Then it holds for any $k\ge 0$ that 
    \begin{equation*}
    \mathrm{dist}_2(z^k,\zsset) \leq \exp\pran{1-\frac{k}{\left\lceil e^2{\lambdamax^2(P)}/{\gamma^2}\right\rceil}}\tilde r \ .
\end{equation*}
\end{prop}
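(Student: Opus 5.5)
We combine the nonexpansiveness of \eqref{eq: iteration} in the $P$-seminorm with metric subregularity, and then convert the resulting sublinear bound into a linear rate by a restart-type argument along the iteration index. The constants $e^2$ and the ceiling in the statement indicate that the linear rate comes from repeatedly halving (by a factor $e$) over epochs of fixed length $N:=\lceil e^2\lambdamax^2(P)/\gamma^2\rceil$.

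\textbf{Step 1 (Fejér monotonicity).} Write $\|w\|_P^2=\inprd{w}{Pw}$. For any $\zsol\in\zsset$ we have $0\in\mathcal F(\zsol)$, and $\mathcal F$ is monotone (it is the saddle subdifferential of a convex--concave function). Applying monotonicity to $P(z^k-z^{k+1})\in\mathcal F(z^{k+1})$ and $0\in\mathcal F(\zsol)$ gives $\inprd{P(z^k-z^{k+1})}{z^{k+1}-\zsol}\ge 0$. Expanding this yields the standard inequality
\[
\|z^{k+1}-\zsol\|_P^2+\|z^k-z^{k+1}\|_P^2\le \|z^k-\zsol\|_P^2 .
\]
Telescoping from $j$ to $j+N-1$ and using that $\|z^{i}-z^{i+1}\|_P$ is nonincreasing in $i$ (this follows from the same monotonicity applied to two consecutive inclusions), we get $N\|z^{j+N-1}-z^{j+N}\|_P^2\le \|z^j-\zsol\|_P^2$ for every $\zsol\in\zsset$.

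\textbf{Step 2 (metric subregularity).} By Assumption~\ref{ass:local}(i), $z^{k+1}\in B_{\tilde r}({\zsol}_0)$, so metric subregularity applies:
\[
\gamma\,\dist_2(z^{k+1},\zsset)\le \|P(z^k-z^{k+1})\|_2\le \sqrt{\lambdamax(P)}\,\|z^k-z^{k+1}\|_P .
\]
Take $\zsol=\operatorname{proj}_{\zsset}(z^j)$ in Step 1 and bound $\|z^j-\zsol\|_P^2\le\lambdamax(P)\dist_2^2(z^j,\zsset)$. Combining, we obtain
\[
\dist_2(z^{j+N},\zsset)\le \frac{\lambdamax(P)}{\gamma\sqrt N}\,\dist_2(z^j,\zsset)\le e^{-1}\dist_2(z^j,\zsset).
\]
Iterating over epochs gives $\dist_2(z^{mN},\zsset)\le e^{-m}\tilde r$. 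Here we use $\dist_2(z^0,\zsset)\le\|z^0-{\zsol}_0\|\le r\le\tilde r$ (taking $C\ge1$, or else directly bounding by $\tilde r$).

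\textbf{Step 3 (filling in intermediate $k$).} Between epochs, $\dist_2(z^k,\zsset)$ must be controlled without losing more than a factor $e$. Because $P$ is only positive semidefinite, Fejér monotonicity holds in the $P$-seminorm and not in the Euclidean distance. The cleanest route is to run the argument with $\dist_P$ in place of $\dist_2$ and convert back only at the end. Alternatively, one can bound $\dist_2(z^k,\zsset)$ for $k\ge 1$ by metric subregularity and the decreasing residual $\|z^{k-1}-z^k\|_P$, which is monotone. With $m=\lfloor k/N\rfloor\ge k/N-1$ and the loss of one epoch absorbed, this gives $\exp(1-k/N)\tilde r$.

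\textbf{Main obstacle.} I expect the main obstacle to be exactly this bookkeeping for a semidefinite $P$ (the ADMM case). There the $P$-seminorm does not dominate $\dist_2$, so the rate constant must pass through $\lambdamax(P)$ and metric subregularity rather than through an eigenvalue lower bound. The exponent $\lambdamax^2(P)/\gamma^2$ in the statement suggests precisely the chain used in Step 2: one factor $\sqrt{\lambdamax(P)}$ from $\|P w\|\le\sqrt{\lambdamax(P)}\|w\|_P$, and one from bounding the $P$-distance of $z^j$ to $\zsset$.
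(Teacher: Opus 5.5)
Your proposal is correct and is essentially the paper's proof: the paper uses the same chain (metric subregularity at $z^k$, then $\|Pw\|_2\le\sqrt{\lambdamax(P)}\,\|w\|_P$, then the residual bound $\sqrt{N}\,\|z^k-z^{k-1}\|_P\le \mathrm{dist}_P(z^{k-N},\zsset)$, then $\mathrm{dist}_P\le\sqrt{\lambdamax(P)}\,\dist_2$) to get a factor $1/e$ every $N$ steps; the one difference is that the paper quotes the residual bound from the literature, whereas you derive it from Fej\'er monotonicity and monotone residuals. Your Step 3 is easier than you suggest: your Step 2 contraction holds from an arbitrary index $j$, so write $k=cN+\ell$ with $0\le\ell<N$ and use Assumption \ref{ass:local}(i) to get $\dist_2(z^\ell,\zsset)\le\|z^\ell-{\zsol}_0\|_2\le\tilde r$, exactly as the paper does (the variant that works with $\mathrm{dist}_P$ throughout, which you mention first, would not convert back to $\dist_2$ when $P$ is singular).
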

\begin{proof}
For any iteration $k\geq 1$, suppose $c\left\lceil \frac{e^2\lambdamax^2(P)}{\gamma^2}\right\rceil \leq k < (c+1)\left\lceil \frac{e^2\lambdamax^2(P)}{\gamma^2}\right\rceil$ for a non-negative integer $c$. Thus we have
    \begin{equation*}\label{eq:eq-p-linear}
        \begin{aligned}
            &\mathrm{dist}_2(z^k,\zsset)=\frac{1}{\gamma}\gamma\mathrm{dist}_2(z^k,\zsset)\leq \frac{1}{\gamma}\mathrm{dist}_2(0,\mathcal F(z^k))\leq \frac{\sqrt{\lambdamax(P)}}{\gamma}\|z^k-z^{k-1}\|_P\\
            & \leq \frac{\sqrt{\lambdamax(P)}}{\gamma\sqrt{\left\lceil e^2\lambdamax^2(P)/ \gamma^2 \right \rceil}}\mathrm{dist}_P(z^{k-\left\lceil e^2\lambdamax^2(P)/ \gamma^2 \right \rceil},\zsset)\leq \frac{1}{\sqrt{\lambdamax(P)}}\frac{1}{e}\mathrm{dist}_P(z^{k-\left\lceil e^2\lambdamax^2(P)/ \gamma^2 \right \rceil},\zsset)\\
            &\leq \frac{1}{e}\mathrm{dist}_2(z^{k-\left\lceil e^2\lambdamax^2(P)/ \gamma^2 \right \rceil},\zsset)\leq \cdots \leq\left(\frac{1}{e}\right)^c\mathrm{dist}_2(z^{k-c\left\lceil e^2\lambdamax^2(P)/ \gamma^2 \right \rceil},\zsset)\leq \exp\pran{1-\frac{k}{\left\lceil e^2\lambdamax^2(P)/\gamma^2\right\rceil}}\tilde r \ ,
        \end{aligned}
    \end{equation*}
    where the first inequality is the definition of metric subregularity, the second and fifth inequalities follow from $P\preceq \lambdamax(P)I$, the third one uses the sublinear convergence of iterate differences~\cite[Theorem 2]{lu2025overview}, the sixth and seventh inequalities follow from induction and the last one utilizes part (i) of Assumption \ref{ass:local}.
\end{proof}

\begin{proof}[Proof of Theorem \ref{thm: local-linear}]
    By combining Corollary \ref{cor: sc-ms},  Proposition \ref{prop:local}  and Proposition \ref{thm:linear-last}, we prove Theorem \ref{thm: local-linear}.
\end{proof}

\subsection{Extension: local acceleration via Halpern and restart schemes}

In this section, we explore how local convergence of operator splitting methods can be further accelerated through a simple yet effective restart mechanism. Motivated by recent advances in restarted first-order methods for solving linear programming~\cite{applegate2023faster,lu2024restarted}, we propose a restarted variant of \eqref{eq: iteration} that enhances the local convergence, see Algorithm \ref{alg: iteration-restart}:

\begin{algorithm}[H]
\caption{Restarted \eqref{eq: iteration}}
\label{alg: iteration-restart}
\SetKwInOut{Input}{Input}
\Input{Initial point $z^{0,0}$, restart frequency $k^*$.}
initialize the outer loop counter $n\leftarrow 0$;\\
\Repeat{\upshape $z^{n+1,0}$ convergence}{
  initialize the inner loop counter $k\leftarrow0$;\\
  \For{$k=0,...,k^*$}{
    $z^{n,k}\gets \frac{k}{k+1}\tilde z^{n,k}+\frac{1}{k+1}z^{n,0}$;\\
    $\tilde z^{n,k+1}$ updated as in \eqref{eq: iteration} from $z^{n,k}$;
  }
  initialize the initial solution $z^{n+1,0} \leftarrow \tilde z^{n,k^*+1}$;\\
  $n\leftarrow n+1$;
}
\end{algorithm}

Algorithm \ref{alg: iteration-restart} presents our nested-loop restarted method. The algorithm begins with initialization at $z^{0,0}$. In each outer loop iteration, the Halpern variant of \eqref{eq: iteration} (line 5-6) is executed until a prescribed restart frequency is achieved. In particular, during the $k$-th inner iteration of the $n$-th outer loop, it takes a weighted
average between the current iterate $\tilde z^{n,k}$ and the initial point of this epoch $z^{n,0}$, followed by an \eqref{eq: iteration} step to obtain $\tilde z^{n,k+1}$. Once restart, the subsequent outer loop is then initialized from $\tilde z^{n,k^*+1}$ of the previous outer loop (line 8).

In the following, we present the local accelerated linear convergence achieved by Algorithm \ref{alg: iteration-restart} on solving \eqref{eq: pd}.
\begin{restatable}[Local accelerated linear convergence on solving \eqref{eq: pd}]{thm}{local}\label{thm: local-linear-restart}
Instate the assumption of Proposition \ref{prop:uqg}, i.e.,
suppose \eqref{eq: p} and \eqref{eq: d} admits a strict complementarity pair ${\zsol}_{0} = ({\xsol}_0,{\ysol}_0)$ and the corresponding complementary faces 
$\face_{{\xsol}_0}$ and $\face_{{\ssol}_0}$, where ${\ssol}_0 = c+A^{\top} {\ysol}_0$, admit quadratic facial violation. Further assume Assumption \ref{ass:full-rank} holds. Consider Algorithm \ref{alg: iteration-restart}. Then there are $r>0$ and $q\in (0,1)$, such that for any $z^{0,0}= (x^{0,0},y^{0,0}) \in B_r(z_{\star 0})$, the following inequality is satisfied 
 \begin{equation*}
            \operatorname{dist}_2(z^{n,0},\zsset) \leq \pran{\frac{1}{e}}^n \operatorname{dist}_2(z^{0,0},\zsset) \ .
    \end{equation*}
\end{restatable}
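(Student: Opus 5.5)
The plan is to combine three ingredients: metric subregularity from Corollary \ref{cor: sc-ms}, a sublinear $O(1/k)$ rate for the Halpern iteration of \eqref{eq: iteration}, and a locality argument that keeps every restarted epoch inside the region where metric subregularity holds.

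\textbf{Step 1 (regularity).} By Corollary \ref{cor: sc-ms}, there are $r_0>0$ and $\gamma>0$ such that, for every $z\in B_{r_0}({\zsol}_0)$,
\[
\gamma\dist_2(z,\zsset)\le \dist_2(0,\mathcal F(z)).
\]

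\textbf{Step 2 (one epoch).} I would view \eqref{eq: iteration} as the fixed-point map $T=(P+\mathcal F)^{-1}P$. Here $P\succeq 0$; for ADMM, Assumption \ref{ass:full-rank} makes $P$ act nondegenerately on the relevant variables. Since $\mathcal F$ is monotone, $T$ is firmly nonexpansive in the $P$-seminorm, and its fixed points are exactly $\zsset$. The standard Halpern analysis (as in \cite{lu2025overview}), run with anchor $z^{n,0}$ and weights $\frac1{k+1}$, gives, for any $\zsol\in\zsset$,
\[
\|\tilde z^{n,k+1}-z^{n,k}\|_P\le \frac{2}{k+1}\|z^{n,0}-\zsol\|_P.
\]
By Halpern monotonicity, all iterates of the epoch stay within $\|z^{n,0}-\zsol\|_P$ of $\zsol$. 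Applying the inclusion $P(z^{n,k}-\tilde z^{n,k+1})\in\mathcal F(\tilde z^{n,k+1})$ at $k=k^*$, then metric subregularity at $\tilde z^{n,k^*+1}$, then $P\preceq\lambdamax(P)I$, yields
\[
\dist_2(z^{n+1,0},\zsset)\le \frac{1}{\gamma}\sqrt{\lambdamax(P)}\,\|\tilde z^{n,k^*+1}-z^{n,k^*}\|_P\le \frac{2\sqrt{\lambdamax(P)}}{\gamma(k^*+1)}\dist_P(z^{n,0},\zsset).
\]
I would take $\zsol$ to be the $P$-projection of $z^{n,0}$ onto $\zsset$. Using $\dist_P\le\sqrt{\lambdamax(P)}\dist_2$, choosing $k^*+1\ge 2e\lambdamax(P)/\gamma$ gives the one-epoch contraction by $1/e$. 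This is where the improvement from quadratic to linear dependence on $\lambdamax(P)/\gamma$ comes from. For ADMM, $P$ is only semidefinite, so I would work in the $(x,\text{relevant }y)$ coordinates, where surjectivity of $A$ makes the seminorm equivalent to a norm. Alternatively, I would compare $\dist_P$ with $\dist_2$ through $\lambdamax$ only, as in Proposition \ref{thm:linear-last}.

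\textbf{Step 3 (locality; the main obstacle).} Metric subregularity is only local, so I must show that every $\tilde z^{n,k^*+1}$ lies in $B_{r_0}({\zsol}_0)$. The argument is by induction on $n$: assume $\dist_2(z^{n,0},\zsset)\le e^{-n}\dist_2(z^{0,0},\zsset)$.
\begin{itemize}
\item Within an epoch, Halpern iterates satisfy $\|z-\zsol\|_P\le\|z^{n,0}-\zsol\|_P$, so each epoch stays within $C\,\dist_2(z^{n,0},\zsset)$ of its start point, with $C$ depending on the conditioning of $P$.
\item Summing the geometric series over epochs bounds the total drift by $C'\dist_2(z^{0,0},\zsset)\le C'\|z^{0,0}-{\zsol}_0\|$.
\end{itemize}
Choosing $r$ with $(1+C')r\le r_0$ then keeps everything inside the ball, mirroring Proposition \ref{prop:local}. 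The delicate points are two: the iterates must never leave $B_{r_0}({\zsol}_0)$ before a contraction has been certified, and for ADMM the $P$-seminorm control of the $y$-component must be converted into Euclidean control, which requires Assumption \ref{ass:full-rank}. Once locality holds, iterating Step 2 gives $\dist_2(z^{n,0},\zsset)\le e^{-n}\dist_2(z^{0,0},\zsset)$.
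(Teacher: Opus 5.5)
Your Steps 1 and 2 follow the paper's argument. Corollary~\ref{cor: sc-ms} supplies metric subregularity. Your one-epoch chain is Proposition~\ref{thm:linear-last-restart} line by line: the inclusion $P(z^{n,k^*}-\tilde z^{n,k^*+1})\in\mathcal F(\tilde z^{n,k^*+1})$, then metric subregularity at $\tilde z^{n,k^*+1}$, then $P\preceq\lambdamax(P)I$, the Halpern residual bound $\frac{2}{k^*+1}$, and $\operatorname{dist}_P\le\sqrt{\lambdamax(P)}\operatorname{dist}_2$.

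Where you differ is locality. The paper (Proposition~\ref{prop:local-restart}) fixes one solution $z_\star$. It notes that the within-epoch bound $\|z^{n,k}-z_\star\|_P\le\|z^{n,0}-z_\star\|_P$ chains across restarts, because $z^{n+1,0}=T(z^{n,k^*})$. Hence $\|z^{n,k}-z_\star\|_P\le\|z^{0,0}-z_\star\|_P$ for all $n,k$, and taking $z_\star=z_{\star0}$ keeps every iterate in $B_{Cr}(z_{\star0})$ without using the rate at all. You instead anchor each epoch at a projection of $z^{n,0}$, bound the per-epoch drift by a multiple of $\operatorname{dist}_2(z^{n,0},\mathcal Z_\star)$, and sum a geometric series. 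This ties locality and contraction into a single induction. For PDHG, where $P\succ0$, your version is valid and even bounds the total path length. The Fej\'er-type chaining is shorter and decouples the two issues.

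The step that fails as written is the ADMM conversion. No choice of ``relevant coordinates'' makes the ADMM seminorm a norm. One computes $\|z\|_P^2=\frac1\rho\|x-\rho A^\top y\|_2^2$, so $\ker P=\{(\rho A^\top y,y): y\in\mathbf F\}$ has dimension $\dim\mathbf F$ for every $A$, and surjectivity does not change this. Consequently your constant $C$ ``depending on the conditioning of $P$'' does not exist for ADMM.

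The missing idea is that the ADMM map $T$ depends on $z$ only through $w=x-\rho A^\top y$ and is Lipschitz in $w$. By the Moreau decomposition $x^+=\prox_{\rho f}(w)$, and $y^+=\frac1\rho(AA^\top)^{-1}(2Ax^+-b-Aw)$; the latter is exactly where Assumption~\ref{ass:full-rank} enters. Hence $\|T(z)-T(z')\|_2\le L\|z-z'\|_P$ for some constant $L$. Metric subregularity is only invoked at the restart points $z^{n+1,0}=T(z^{n,k^*})$, so combining this with the chaining above gives $\|z^{n+1,0}-z_{\star0}\|_2\le L\|z^{n,k^*}-z_{\star0}\|_P\le L\|z^{0,0}-z_{\star0}\|_P\le L\sqrt{\lambdamax(P)}\,r$. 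That is the locality you need. The paper obtains the same control through the Douglas--Rachford variable $h^k=x^k+\rho s^k$ and the explicit recovery formula for $y^k$ involving $(AA^\top)^{-1}$.
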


\begin{rem}
    Compare the local linear rate of vanilla \eqref{eq: iteration} and restarted variant Algorithm \ref{alg: iteration-restart}. Denote $\kappa=\frac{\lambdamax(P)}{\gamma}$ the condition number. It turns out restart enables faster convergence with rate $O\pran{\kappa\log\frac{1}{\epsilon}}$ over linear rate $O\pran{\kappa^2\log\frac{1}{\epsilon}}$ achieved by last iterates.
\end{rem}

\subsection{Local accelerated linear convergence of restart variants}
In this section, the local behavior of restart variants is investigated, under metric subregularity. Similar the analysis in Section \ref{sec:linear-last} for last iterates, we firstly made following assumption of local initialization of Algorithm \ref{alg: iteration-restart}:
\begin{ass}[Local initialization]\label{ass:local-restart}
    There exist an optimal solution ${\zsol}_0$ satisfying strict complementarity, a radius $r>0$ and a constant $C>0$, such that given initial solution $z^{0,0}\in B_r({\zsol}_0)$, the iterates $\{z^{n,0}\}_{n=0}^{\infty}$ of Algorithm \ref{alg: iteration-restart} satisfy
    \begin{enumerate}
        \item[(i)]  $z^{n,0}\in B_{\tilde r}({\zsol}_0)$ for any $n\geq 0$, where $\tilde r:=Cr$;
        \item[(ii)] $\{z^{n,0}\}_{n=0}^{\infty}$ converge to a strictly complementary solution.
    \end{enumerate}
\end{ass}
Parallel to Proposition \ref{prop:local}, the following proposition shows that both restarted PDHG and ADMM indeed satisfy Assumption \ref{ass:local-restart}, whose proof is deferred to Appendix \ref{app:prove-local}.
\begin{prop}\label{prop:local-restart}
    Let Assumption \ref{ass:full-rank} hold and consider iterates $\{z^{n,0}\}_{n=0}^{\infty}$ of restarted PDHG or restarted ADMM for solving \eqref{eq: pd}.  Then Assumption \ref{ass:local-restart} holds. 
\end{prop}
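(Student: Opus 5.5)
The plan is to treat both restarted schemes as Halpern iterations of one averaged operator in the geometry induced by $P$. Let $T$ denote the map $z \mapsto z^+$ defined implicitly by \eqref{eq: iteration}, i.e.\ $z^+ = (P+\mathcal F)^{-1}P z$, where uniqueness of the relevant components in the ADMM case uses Assumption \ref{ass:full-rank}. The steps, in order, are: establish $P$-Fejér monotonicity of the outer iterates, convert it to a Euclidean bound giving part (i), prove convergence of the outer iterates, and finally show the limit is strictly complementary, which gives part (ii).

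\textbf{Step 1 (Fejér monotonicity across epochs).} Since $\mathcal F$ is maximal monotone, $T$ is firmly nonexpansive in the seminorm $\|\cdot\|_P$: for any $z,w$,
\[
\|Tz-Tw\|_P^2+\|(z-Tz)-(w-Tw)\|_P^2\le \|z-w\|_P^2 .
\]
Moreover $\operatorname{Fix}T=\zsset$, because $P(z-z)=0\in\mathcal F(z)$ exactly when $z\in\zsset$. Fix any $\zsol\in\zsset$. By induction on $k$, using that $z^{n,k}$ is a convex combination of $\tilde z^{n,k}=Tz^{n,k-1}$ and $z^{n,0}$, together with nonexpansiveness of $T$, we get
\[
\|z^{n,k}-\zsol\|_P\le\|z^{n,0}-\zsol\|_P \quad\text{and}\quad \|\tilde z^{n,k+1}-\zsol\|_P\le \|z^{n,0}-\zsol\|_P .
\]
Consequently $\|z^{n+1,0}-\zsol\|_P\le\|z^{n,0}-\zsol\|_P$ for all $n$ and all $\zsol\in\zsset$.

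\textbf{Step 2 (part (i): staying in a ball).} For PDHG, $P\succ0$ when $\eta<1/\|A\|_2$. Taking $\zsol={\zsol}_0$ in Step 1 gives
\[
\|z^{n,0}-{\zsol}_0\|\le \sqrt{\lambdamax(P)/\lambdamin(P)}\; r ,
\]
so part (i) holds with $C=\sqrt{\lambdamax(P)/\lambdamin(P)}$. For ADMM, $P$ is only positive semidefinite, since $\|z\|_P^2=\|x/\sqrt\rho-\sqrt\rho A^{\top}y\|^2$ controls only one combination of $x$ and $y$. I will instead show that the full output $Tz$ is a Lipschitz function of $w:=x-\rho A^{\top}y$ alone. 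The $v^+$-step is a proximal step of $f^*$ evaluated at an affine function of $w$. The $y^+$-step is a strongly convex problem, because $AA^{\top}\succ0$ by Assumption \ref{ass:full-rank}, so its solution is Lipschitz in its data with constant governed by $\sigma_{\min}(A)^{-2}$. Since every $\zsol\in\zsset$ is a fixed point, this gives
\[
\|Tz-\zsol\|\le L\,\|z-\zsol\|_P .
\]
Each $z^{n+1,0}$ is of the form $Tz^{n,k^*}$, so we obtain $\|z^{n+1,0}-{\zsol}_0\|\le L\,\|z^{0,0}-{\zsol}_0\|_P\le L\sqrt{\lambdamax(P)}\,r$. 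Hence part (i) holds with $C=\max\{1,L\sqrt{\lambdamax(P)}\}$.

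\textbf{Step 3 (convergence of the outer iterates).} Halpern's standard bound in the firmly nonexpansive setting gives
\[
\|\tilde z^{n,k^*+1}-T\tilde z^{n,k^*+1}\|_P\lesssim \frac{\|z^{n,0}-\zsol\|_P}{k^*},
\]
and the same argument, applied to any epoch tail, shows the $P$-residual of $z^{n,0}$ tends to $0$ as $n\to\infty$. By Step 2, $\{z^{n,0}\}$ is bounded in the Euclidean norm, so it has cluster points. Closedness of the graph of $\mathcal F$, combined with $P(z^{n,k}-\tilde z^{n,k+1})\in\mathcal F(\tilde z^{n,k+1})$ and the vanishing residual, places every cluster point in $\zsset$. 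Fejér monotonicity in $\|\cdot\|_P$ then gives convergence by Opial's argument. In the ADMM case I will run Opial on the $w$-components and transfer the conclusion to $z$ through the Lipschitz map from Step 2.

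\textbf{Step 4 (part (ii): the limit is strictly complementary).} Recall from the proof of Proposition \ref{prop:uqg} the radius $r_0$ such that every $\zsol\in\zsset\cap B_{r_0}({\zsol}_0)$ satisfies \eqref{eq: relative_interior}. Choosing $r<r_0/C$ forces the limit, which lies in the closed ball $B_{Cr}({\zsol}_0)$, to be strictly complementary.

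\textbf{Main obstacle.} The delicate point is the ADMM case. Fejér monotonicity holds only in a seminorm, so both the boundedness in part (i) and the Opial step must be routed through the Lipschitz dependence of $T$ on $w$. That Lipschitz dependence is exactly where the surjectivity in Assumption \ref{ass:full-rank} is used.
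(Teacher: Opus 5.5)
Your Steps 1, 2 and 4 agree in substance with the paper's proof. The paper also proves $\|z^{n,k}-z_\star\|_P\le\|z^{n,0}-z_\star\|_P\le\|z^{0,0}-z_\star\|_P$ by induction over the Halpern averaging and converts this into a Euclidean bound. For ADMM, the paper tracks the Douglas--Rachford variable $h=x+\rho s$ through its nonexpansive recovery maps. You instead use the equivalent observation that $Tz$ depends only on $w=x-\rho A^{\top}y$, with $\|z\|_P=\|w\|/\sqrt{\rho}$, and is Lipschitz in $w$ because $AA^{\top}\succ0$; this has the advantage of averaging in $z$, exactly as the restarted algorithm is written. For part (ii), the paper only observes that optimal solutions near $z_{\star 0}$ are strictly complementary. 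It never shows that $\{z^{n,0}\}$ converges, which is what your Step 3 tries to supply.

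That step has a gap. With a fixed restart frequency $k^*$, Halpern's bound gives only $\|z^{n+1,0}-Tz^{n+1,0}\|_P\le\frac{2}{k^*+1}\operatorname{dist}_P(z^{n,0},\mathcal Z_\star)$. This is $O(1/k^*)$ uniformly in $n$, not $o(1)$ as $n\to\infty$. Applying it to later epochs helps only if you already know $\operatorname{dist}_P(z^{n,0},\mathcal Z_\star)\to0$, which is circular. So the claim that the residual of $z^{n,0}$ vanishes is unsupported, and without it neither the closed-graph step nor Opial's argument has any input.

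The missing ingredient is a per-epoch sufficient decrease, which follows from the firm nonexpansiveness you already stated. Combine $\|Tz-z_\star\|_P^2\le\|z-z_\star\|_P^2-\|z-Tz\|_P^2$ with convexity, $\|\lambda a+(1-\lambda)b\|_P^2\le\lambda\|a\|_P^2+(1-\lambda)\|b\|_P^2$ for $\lambda=\frac{k+1}{k+2}$. An induction over $k$ inside epoch $n$ then gives $\|z^{n,k}-z_\star\|_P^2\le\|z^{n,0}-z_\star\|_P^2-\frac{1}{k+1}\|z^{n,0}-Tz^{n,0}\|_P^2$ for $k\ge1$, and hence
\[
\|z^{n+1,0}-z_\star\|_P^2\le\|z^{n,0}-z_\star\|_P^2-\tfrac{1}{k^*+1}\|z^{n,0}-Tz^{n,0}\|_P^2\quad\text{for all } z_\star\in\mathcal Z_\star .
\]
Summing over $n$ makes the residuals square-summable, and then your cluster-point and Opial arguments go through. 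For ADMM, run them on $w^{n,0}$, where $\|\cdot\|_P$ is a genuine norm. Then transfer to $z^{n+1,0}=Tz^{n,k^*}$, which is a Lipschitz function of $w^{n,k^*}$, itself a continuous function of $w^{n,0}$.
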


Now we establish the linear convergence of Algorithm \ref{alg: iteration-restart} under the local initialization assumption (Assumption~\ref{ass:local-restart}), which applies directly to both restarted PDHG and ADMM due to Proposition~\ref{prop:local-restart}.
\begin{prop}[Linear rate of restarted iterates]\label{thm:linear-last-restart}
    Consider Algorithm \ref{alg: iteration-restart} with a positive semi-definite matrix $P$ for solving the primal-dual form \eqref{eq: pd} of conic program \eqref{eq: p}, with restart frequency $k^*$.  The iterates $\{z^{n,k}=(x^{n,k},y^{n,k})\}_{n=0,...,\infty}^{k=0,...,k^*}$ are obtained from this iterate update rule and the initial solution $z^{0,0}=(x^{0,0},y^{0,0})$ is chosen to satisfy Assumption \ref{ass:local-restart}, where ${\zsol}_0$, $r$ and $\tilde r:=Cr$ be corresponding constants in Assumption \ref{ass:local}. Suppose metric subregularity is satisfied for \eqref{eq: pd} with constant $\gamma>0$ on a region $B_{\tilde r}({\zsol}_0)\subset \mathcal{Z}$. Let restart frequency $k^* \geq\left\lceil\frac{2e{\lambdamax(P)}}{\gamma} \right\rceil$. Then it holds for any $n\ge 0$ that  
    \begin{equation*}
            \operatorname{dist}_2(z^{n,0},\zsset) \leq \pran{\frac{1}{e}}^n \operatorname{dist}_2(z^{0,0},\zsset) \ .
    \end{equation*}
\end{prop}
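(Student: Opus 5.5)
The plan is to run the standard Halpern-plus-restart argument one epoch at a time. Fix an epoch $n$ and write $z^{n,0}$ for its anchor. Let $\bar z^n:=\operatorname{proj}^P_{\zsset}(z^{n,0})$ be a $P$-projection of the anchor onto $\zsset$; if $P$ is only semidefinite, any minimizer of $\|z^{n,0}-\cdot\|_P$ will do.

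The inner loop is a Halpern iteration for the operator $T$ defined by \eqref{eq: iteration}, $P(z-Tz)\in\mathcal F(Tz)$. Since $\mathcal F$ is maximal monotone, $T$ is firmly nonexpansive in the $P$-seminorm. The first step is to invoke the classical Halpern residual bound (as in \cite{lu2025overview} or the Lieder/Kim analysis) in the $P$-geometry:
\[
\|z^{n,k^*}-\tilde z^{n,k^*+1}\|_P\le \frac{2}{k^*+1}\,\|z^{n,0}-\bar z^n\|_P=\frac{2}{k^*+1}\operatorname{dist}_P(z^{n,0},\zsset).
\]

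The second step converts this residual into a distance bound at the output $z^{n+1,0}=\tilde z^{n,k^*+1}$. From the update rule, $P(z^{n,k^*}-z^{n+1,0})\in\mathcal F(z^{n+1,0})$, so
\[
\operatorname{dist}_2(0,\mathcal F(z^{n+1,0}))\le\|P(z^{n,k^*}-z^{n+1,0})\|_2\le\sqrt{\lambdamax(P)}\,\|z^{n,k^*}-z^{n+1,0}\|_P .
\]
Part (i) of Assumption \ref{ass:local-restart} places $z^{n+1,0}$ in $B_{\tilde r}({\zsol}_0)$, so metric subregularity (Definition \ref{def:metric-subreg}) applies there and gives
\[
\operatorname{dist}_2(z^{n+1,0},\zsset)\le\frac{\sqrt{\lambdamax(P)}}{\gamma}\cdot\frac{2}{k^*+1}\operatorname{dist}_P(z^{n,0},\zsset).
\]
We then use $\operatorname{dist}_P\le\sqrt{\lambdamax(P)}\,\operatorname{dist}_2$. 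With the hypothesis $k^*+1> 2e\lambdamax(P)/\gamma$, this yields $\operatorname{dist}_2(z^{n+1,0},\zsset)\le e^{-1}\operatorname{dist}_2(z^{n,0},\zsset)$. Inducting over $n$ gives the claim.

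The main obstacle is the first step. The issue is that $P$ is only positive semidefinite for ADMM, so the Halpern bound has to be justified in a seminorm. We must confirm that the $P$-projection exists and that the firm nonexpansiveness inequality $\|Tz-Tz'\|_P^2\le\langle Tz-Tz',z-z'\rangle_P$ holds. The latter follows directly from monotonicity of $\mathcal F$ applied to $P(z-Tz)\in\mathcal F(Tz)$ and $P(z'-Tz')\in\mathcal F(Tz')$. The standard Halpern potential argument uses only this inequality and inner products in $P$, so it carries over verbatim.

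A second point to check is that metric subregularity is applied only at the epoch outputs $z^{n+1,0}$. This is exactly what Assumption \ref{ass:local-restart}(i) guarantees, so the intermediate inner iterates never need to lie in the ball.
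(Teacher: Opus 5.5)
Your proposal is correct and follows the paper's own chain of inequalities. You apply metric subregularity at the epoch output $z^{n+1,0}$, which Assumption~\ref{ass:local-restart}(i) places in the ball, then use the inclusion $P(z^{n,k^*}-\tilde z^{n,k^*+1})\in\mathcal F(\tilde z^{n,k^*+1})$, the bound $\|Pw\|_2\le\sqrt{\lambdamax(P)}\,\|w\|_P$, the Halpern residual bound $\frac{2}{k^*+1}\operatorname{dist}_P(z^{n,0},\zsset)$, and $\operatorname{dist}_P\le\sqrt{\lambdamax(P)}\operatorname{dist}_2$. Your added check that the Halpern bound holds in the $P$-seminorm, via firm nonexpansiveness derived from monotonicity of $\mathcal F$, covers a point the paper leaves implicit in its citation of Lieder. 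You also do not need a $P$-projection to exist, since taking the infimum over $z^\star\in\zsset$ suffices.
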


\begin{proof}
It holds for any outer iteration $n\geq 0$ that
\begin{equation*}
    \begin{aligned}
        &\dist_2(z^{n,0},\zsset)=\dist_2(\tilde z^{n-1,k^*+1},\zsset)\leq \frac{1}{\gamma}\dist_2(0,\mathcal F(\tilde z^{n-1,k^*+1}))\leq \frac{1}{\gamma}\|P(z^{n-1,k^*}-\tilde z^{n-1,k^*+1})\|_2\\
        \leq& \frac{\sqrt{\lambdamax(P)}}{\gamma}\|z^{n-1,k^*}-\tilde z^{n-1,k^*+1}\|_P\leq \frac{2\sqrt{\lambdamax(P)}}{\gamma (k^*+1)}\dist_P(z^{n-1,0},\zsset)\leq \frac{2{\lambdamax(P)}}{\gamma (k^*+1)}\dist_2(z^{n-1,0},\zsset)\\
        \leq &\frac{1}{e}\dist_2(z^{n-1,0},\zsset)
        \leq  \cdots\leq \pran{\frac{1}{e}}^n\dist_2(z^{0,0},\zsset) \ ,
    \end{aligned}
\end{equation*}
 where the first inequality is the definition of metric subregularity together with Assumption \ref{ass:local-restart}, the second one uses $P(z^{n-1,k^*}-\tilde z^{n-1,k^*+1})\in \mathcal F(\tilde z^{n-1,k^*+1})$, the third and fifth inequalities follow from $P\preceq \lambdamax(P)I$, the fourth one uses the sublinear convergence of Halpern iterates~\cite[Theorem 2.1]{lieder2021convergence}, the sixth and last inequalities follow from induction.
\end{proof}

\begin{proof}[Proof of Theorem \ref{thm: local-linear-restart}]
    Combining Corollary \ref{cor: sc-ms}, Proposition \ref{prop:local-restart},  and Proposition \ref{thm:linear-last-restart}, we prove Theorem \ref{thm: local-linear-restart}.
\end{proof}

\section{Generalization: convex composite optimization}\label{sec: generalization}

In this section, we consider the general convex composite optimization. Let $f\colon\mathbf{E}\to\overline{\mathbb R}$ and $g\colon\mathbf{F} \to\overline{\mathbb R}$ be proper, closed, and convex. The 
 convex composite optimization is 
\begin{equation}\label{eq: P_gen}
  \inf_{x\in\mathbf{E}}\quad f(x)+g(Ax),
  \tag{$\mathrm P_{\rm c}$}
\end{equation}
and its Fenchel dual is
\begin{equation}\label{eq: D_gen}
  \sup_{y\in\mathbf{F}}
  \{-f^*(-A^{\top}y)-g^*(y)\}.
  \tag{$\mathrm D_{\rm c}$}
\end{equation}
Note the conic model is recovered by taking $f(x)=\langle c,x\rangle+\iota_{\mathcal K}(x)$ and 
  $g(u)=\iota_{\{b\}}(u).$ 

All our previous results, as illustrated in Figure \ref{fig:proof}, continue to hold for \eqref{eq: P_gen} and \eqref{eq: D_gen}, under proper generalization. Specifically, in Section \ref{sec: gen_sc_qsv_uqg}, we generalize strict complementarity and quadratic facial violation, and show their implication of uniform quadratic growth of AL functions. In Section \ref{sec: gen_ec_ll_a},
we demonstrate the equivalence of conditions guaranteeing linear convergence of operator splitting type methods, the linear convergence of operator splitting type methods under them, and the acceleration.

\subsection{Strict complementarity, quadratic subdifferential violation, and uniform quadratic growth of AL functions}\label{sec: gen_sc_qsv_uqg}

In the following, we define strong duality and generalize strict complementarity and quadratic facial violation. 

\begin{mydef}[Strong duality]
The Problems \eqref{eq: p} and \eqref{eq: d} satisfy strong duality if there is an optimal primal-dual pair $(\xsol,\ysol)\in \mathbf{E}\times \mbfF$ such that
\begin{equation}
    \pval :\,= f(\xsol) + g(A\xsol) = 
    -f^*(-A^{\top} \ysol) - g^*(\ysol) =\,: \dval. 
\end{equation}
and both $\pval$ and $\dval$ are finite. 
\end{mydef}

We motivate the general strict complementarity from the KKT condition. The KKT condition, which is equivalent to the strong duality when $f$ and $g$ are closed and convex proper functions: 
\begin{equation}\label{eq: KKT_p}
    -A^{\top} \ysol \in \partial f(\xsol),\quad \text{and}\quad \ysol \in \partial g(A\xsol),
\end{equation}
or 
\begin{equation}\label{eq: KKT_d}
    \xsol\in \partial f^*(-A^{\top} \ysol),\quad \text{and}\quad A\xsol \in \partial g^*(\ysol).
\end{equation}
Indeed, we have the following equivalence
\begin{equation}
\begin{aligned}\label{eq: opt_condition_equivalence}
f(\xsol) + g(A\xsol) = 
    -f^*(-A^{\top} \ysol) - g^*(\ysol) \in \mathbb{R}
   & \iff 
f(\xsol) + f^*(-A^{\top} \ysol) 
+g(A\xsol)+ g^*(\ysol) =0  \\
& \overset{(a)}{\iff} 
\begin{cases} 
f(\xsol) + f^*(-A^{\top} \ysol) =  -\inprd{\xsol}{A^{\top} \ysol} \\ 
 g(A\xsol)+ g^*(\ysol) = \inprd{A\xsol}{\ysol}
\end{cases} \\ 
& \overset{(b)}{\iff} 
\begin{cases} 
-A^{\top} \ysol \in \partial f(\xsol), \quad \xsol \in \partial f^*(-A^{\top} \ysol) \\
\ysol \in \partial g(A\xsol), \quad A\xsol \in \partial g^*(\ysol).
\end{cases}
\end{aligned}
\end{equation}
In the step $(a)$, we use Young's inequality: 
\begin{align}
    f(\xsol) + f^*(-A^{\top} \ysol) 
     \geq \inprd{\xsol}{-A^{\top} \ysol},  \quad \text{and}\quad  
    g(A\xsol)+ g^*(\ysol) 
     \geq \inprd{A\xsol}{\ysol}.
\end{align}
In the step $(b)$, we use the fact that $f$ and $g$ are valued in $\mathbb{R}\cup \{+\infty\}$ and both are proper, closed, and convex, and so are $f^*$ and $g^*$.

Strict complementarity requires a stronger inclusion of the last inclusion relationship in \eqref{eq: opt_condition_equivalence}:  $(\xsol, A\xsol)$ is in the  relative interior of  $\rel(\partial f^*(-A^{\top} \ysol))\times  \rel(\partial (g^*(\ysol)))$ and 
    $(\ysol,-A^{\top} \ysol)$ is in the relative interior of $\partial g(A \xsol) )\times \rel(\partial (f(\xsol)))$.  Note it indeed reduces to strict complementarity defined in Section \ref{sec: prelim} for $f(x)=\langle c,x\rangle+\iota_{\mathcal K}(x)$ and 
  $g(u)=\iota_{\{b\}}(u)$. 
\begin{mydef}[Strict complementarity: genereal version]\label{def: sc_general}
The primal and dual programs, \eqref{eq: p} and \eqref{eq: d}, satisfies strict complementarity if there is an optimal primal-dual pair $(\xsol,\ysol)$ such that 
\begin{subequations}
    \begin{align}
    (\xsol, A\xsol) \in \rel(\partial f^*(-A^{\top} \ysol))\times  \rel(\partial g^*(\ysol)) \label{eq: dual_sc} \\
    (\ysol,-A^{\top} \ysol) \in \rel( \partial g(A \xsol) )\times \rel(\partial f(\xsol)). \label{eq: p_sc} 
\end{align}
\end{subequations}
\end{mydef}

\begin{rem}
Apart from the conic case, we know that strict complementarity holds in a generic sense defined in \cite{drusvyatskiy2011generic} and in many structural instances \cite{ding2024sharpnesswellconditioningnonsmoothconvex}. Strict complementarity has been the core assumption in many recent works on error bounds and sensitivity \cite{ding2023strict,ding2021simplicity} and algorithm design and convergence \cite{garber2023linear,fisher2022local,garber2026randomized,ding2020spectral}.
\end{rem}

We generalize the previous quadratic facial violation by 
quadratic subdifferential violation: a quadratic growth condition of the function under a linear perturbation based on its subdifferential. According to \cite[Theorem 3.3]{artacho2008characterization}, it is equivalent to metric subregularity of the subdifferential at some $x$ for $v$. 
It is also a special case of the firm convexity \cite[Definition 4.1]{drusvyatskiy2018error} if perturbation there is restricted to the vectors in the subdifferential. The generalized condition indeed reduces to quadratic facial violation when the function $f=\iota_K$ and $-v$ is an exposing vector. 
\begin{mydef}[Quadratic subdifferential violation] Given a convex, closed, and proper function $f:\mathbf{E}\rightarrow \bar{\mathbb{R}}$ and       $x\in \mathrm{dom}(\partial f = \{z\mid \partial f(z)\not=\emptyset\}$. The differential $\partial f(x)$ satisfies quadratic subdifferential violation (QSV) relative to a vector $v\in \partial f(x)$ if for any compact set $\mathcal{X}\subset \mbfE$, there is an $\alpha>0$ such that 
\begin{equation}\label{eq: fc_inequality}
f(y) \geq f(x) + \inprd{v}{y-x} + \frac{\alpha}{2} \dist^2(y,(\partial f)^{-1}(v))\quad \quad \text{for all}\; y\in \mathcal{X}.
\end{equation}
We say the function $f$ satisfies QSV if it satisfies quadratic subdifferential violation relative to any $v\in \mbfE$.
\end{mydef}

\begin{rem}
    Apart from the conic examples in Section \ref{sec: Pqfv},  all strongly convex functions, polyhedral functions satisfies QSV. See also \cite[Section 4]{drusvyatskiy2018error} for more examples that satisfies QSV and are frequently used in applications.
\end{rem}

Recall the augmented Lagrangian defined in \eqref{eq: AL_gen}. 
We will show the following uniform quadratic growth result for augmented Lagrangians if the multipliers are optimal, near a strict complementarity pair, and the QSV is satisfied.
The proof follows a similar line to the proof of Proposition \ref{prop:uqg}.

\begin{prop}\label{prop: gen_uqg}
Suppose \eqref{eq: p} and \eqref{eq: d} admits the following conditions:
\begin{enumerate}
    \item Strict complemetarity: there is an ${\zsol}_{0} = ({\xsol}_0,{\ysol}_0)$ satisfies strict complementarity
    \item Quadratic subdifferential violation: the subdifferentials $\partial f({\xsol}_0)$, $\partial g (A{\xsol}_0)$, $\partial f^*(-A^{\top} {\ysol}_0)$, and $\partial g^*({\ysol}_0)$ satisfies QSV relative to $-A^{\top} {\ysol}_0$, ${\ysol}_0$, ${\xsol}_0$, and $A{\xsol}_0$, respectively.
\end{enumerate}
Fix $\beta>0$. Then there is an $r>0$, such that for any compact set $\mathcal{B}\subset \mbfE^2 \times \mbfF^2$, there are constants $\alpha_1,\alpha_2$ so that the following inequality holds for any $(x,v,y,u)\in \mathcal{B}$ and  any $\zsol=(\xsol,\ysol)\in \zsset \cap B_r({\zsol}_0)$
\begin{subequations}\label{eq: AL_diff}
    \begin{align}
    L_{\beta}^P(x,u,\ysol) -  L_{\beta}^P(\xsol,A\xsol,\ysol) 
    &\geq \alpha_1 \dist^2((x,u), \mathcal{P}_\star) \label{eq: p_lag_Q}\\ 
      L_{\beta}^D(y,v,\xsol) - L_{\beta}^D(\ysol, -A^{\top} \ysol,\xsol)
    & \geq \alpha _2 \dist^2((y,s),\mathcal{D}_\star). \label{eq: d_lag_Q}, 
\end{align}
\end{subequations}   
where $\mathcal{P}_\star = \{(x,Ax)\mid x\in \xsset\} $ and 
$\mathcal{D}_\star = \{(y, -A^{\top} y)\mid y \in \ysset\}.$
\end{prop}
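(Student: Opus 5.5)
We will mirror the proof of Proposition \ref{prop:uqg}, treating only the dual inequality \eqref{eq: d_lag_Q}; the primal one is symmetric, with $(f,g)$ and $(f^*,g^*)$ exchanged. The plan is to express the augmented Lagrangian gap as a sum of nonnegative KKT-type residuals, bound the distance to the solution set by bounded linear regularity, and then control each residual by QSV. \emph{First step.} Fix $\zsol\in\zsset\cap B_r({\zsol}_0)$. Using strong duality $f(\xsol)+g(A\xsol)=-f^*(-A^{\top}\ysol)-g^*(\ysol)$ together with the Fenchel--Young equalities in \eqref{eq: opt_condition_equivalence}, we rewrite the gap as
\begin{align*}
L_\beta^D(y,v,\xsol)-L_\beta^D(\ysol,-A^{\top}\ysol,\xsol)
={}& \bigl[f^*(v)-f^*(-A^{\top}\ysol)-\inprd{\xsol}{v+A^{\top}\ysol}\bigr]\\
&+\bigl[g^*(y)-g^*(\ysol)-\inprd{A\xsol}{y-\ysol}\bigr]
+\tfrac{1}{2\beta}\|A^{\top}y+v\|^2 .
\end{align*}
Since $\xsol\in\partial f^*(-A^{\top}\ysol)$ and $A\xsol\in\partial g^*(\ysol)$, both bracketed terms are nonnegative Bregman-type quantities.

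\emph{Second step: bounded linear regularity.} Let $\mathcal{S}_f=(\partial f^*)^{-1}(\xsol)=\partial f(\xsol)$ and $\mathcal{S}_g=(\partial g^*)^{-1}(A\xsol)=\partial g(A\xsol)$. By the KKT characterization, $\mathcal{D}_\star=\{(y,v): v=-A^{\top}y\}\cap(\mathcal{S}_g\times\mathcal{S}_f)$. This holds because any dual optimal $y$ paired with the fixed primal optimal $\xsol$ satisfies the KKT conditions. By strict complementarity \eqref{eq: p_sc}, the point $(\ysol,-A^{\top}\ysol)$ lies in the affine set and in $\rel(\mathcal{S}_g\times\mathcal{S}_f)$, so \cite[Theorem 4.6]{bauschke1999strong} gives, on compacts,
\[
\dist((y,v),\mathcal{D}_\star)\le\kappa'\bigl(\|A^{\top}y+v\|+\dist(v,\mathcal{S}_f)+\dist(y,\mathcal{S}_g)\bigr).
\]
Squaring this and comparing with the first step, it suffices to bound $\dist^2(v,\partial f(\xsol))$ and $\dist^2(y,\partial g(A\xsol))$ by the two Bregman terms. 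That is exactly QSV of $f^*$ relative to $\xsol$ and of $g^*$ relative to $A\xsol$, but applied at the moving point $\zsol$ rather than at ${\zsol}_0$.

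\emph{Third step, which is the main obstacle: uniformity of the QSV constant and of the sets.} The QSV hypothesis is stated only at ${\zsol}_0$. As Remark \ref{rem: qfv_issue} warns, the constant may blow up when the reference vector changes. We handle this as in \eqref{eq: relative_interior} and Lemma \ref{lem: inner_p_bound_diff_x_in_rel_face_conic}.

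First, we use the relative-interior property. Because ${\xsol}_0\in\rel\partial f^*(-A^{\top}{\ysol}_0)$, for $r$ small any optimal $\xsol$ near ${\xsol}_0$ lies in $\rel\partial f^*(-A^{\top}{\ysol}_0)$. By the Fenchel--Young characterization, $\partial f(\xsol)=\partial f({\xsol}_0)$; more precisely, both equal the set of $w$ with $f^*(w)-\inprd{\xsol}{w}$ minimal. This is the convex-function analogue of the constancy of exposed faces on the relative interior, and the analogous statement holds for $g$. So the target sets $\mathcal{S}_f,\mathcal{S}_g$ do not move.

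Second, we compare the Bregman terms. Write $h(v)=f^*(v)-f^*(-A^{\top}{\ysol}_0)-\inprd{{\xsol}_0}{v+A^{\top}{\ysol}_0}\ge0$. Then
\[
f^*(v)-f^*(-A^{\top}\ysol)-\inprd{\xsol}{v+A^{\top}\ysol}=h(v)-\inprd{\xsol-{\xsol}_0}{v-w}
\]
for any $w\in\mathcal{S}_f$, since the optimal values agree. Choose $\theta>0$ so that ${\xsol}_0+\theta^{-1}(\xsol-{\xsol}_0)$ stays inside $\partial f^*(w)$ for all such $\xsol$; the relative interior gives room for a ball of radius $\sim r_0$ in the affine hull. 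Then the subgradient inequality at that point yields the new term $\ge(1-\theta)\,h(v)$, with $\theta\le 1/2$ once $r\le r_0/2$. This is the exact analogue of $\inprd{s}{x}\ge\theta_1\inprd{s}{x_0}$ in Lemma \ref{lem: inner_p_bound_diff_x_in_rel_face_conic}. Combining this with QSV at ${\xsol}_0$, namely $h(v)\ge\frac{\alpha}{2}\dist^2(v,\mathcal{S}_f)$ on the compact set, gives a constant independent of $\zsol$.

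The same argument applies to $g^*$. Collecting constants yields \eqref{eq: d_lag_Q}, with $\alpha_2$ depending only on the compact set, $r$, $\beta$, and ${\zsol}_0$.
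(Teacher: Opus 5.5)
Your proposal is correct and takes essentially the same route as the paper. Like the paper, you split the gap into nonnegative Fenchel--Young/KKT residuals, invoke bounded linear regularity via strict complementarity, and get a uniform QSV constant from the relative-interior trick. Your choice $\theta=1/2$, i.e.\ $\tilde x=2\xsol-{\xsol}_0$, is exactly the $2v-v_0\in\partial f(x)$ step of Lemma~\ref{lem: inner_p_bound_diff_x_in_rel_face}, here written out on the dual side rather than the primal side.

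The detour through $\partial f(\xsol)=\partial f({\xsol}_0)$ makes a true claim, but your one-line justification for it is loose, and you do not need it. Since $({\xsol}_0,y)$ is a KKT pair for every dual optimal $y$, you can take $\mathcal{S}_f=\partial f({\xsol}_0)$ and $\mathcal{S}_g=\partial g(A{\xsol}_0)$ directly. You can then write the gap with fixed base point $-A^{\top}{\ysol}_0$ and moving subgradient $\xsol$, which mirrors what the paper does on the primal side.
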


\begin{rem}
    The strict complementarity condition can be further relaxed: if any of the subdifferential sets in Definition \ref{def: sc_general} is a polyhedron, then we can remove the relative interior operator for that set. 
  The reason is that we only need a bounded linear regularity of those sets. The strict complementarity is assumed to deduce bounded linear regularity of those sets \cite[Theorem 4.6]{bauschke1999strong}. The extra layer of multiple optimal solutions, an issue raised in Remark \ref{rem: qfv}, is addressed in Lemma \ref{lem: uniform_dist} for the generalized quadratic facial violation. 
\end{rem}

\begin{proof}
Here, we only establish the quadratic growth of the primal full augmented Lagrangian, i.e., \eqref{eq: p_lag_Q}, as the argument for \eqref{eq: d_lag_Q} is identical. We first note that 
$({\xsol}_0,\ysol)$ again forms a primal-dual pair for any $(\xsol,\ysol)\in \zsset$ by optimality of ${\xsol}_0$ and $\ysol$. Thus, following \eqref{eq: opt_condition_equivalence}, we have $(\ysol,-A^{\top} \ysol) \in \partial g(A {\xsol}_0 )\times \partial f({\xsol}_0)$. Hence, for all small enough $r$ and any $(\xsol,\ysol)\in \zsset \cap B_r(\zsol)$, we have 
\begin{equation}\label{eq: sc_close_opt_pair}
    (\ysol,-A^{\top} \ysol) \in \rel(\partial g(A {\xsol}_0) )\times \rel(\partial f({\xsol}_0)).
\end{equation}

\paragraph{Rewriting Primal Lagrangian as sum of KKT violations} 
We rewrite $L_{\beta}^P(x,y,\ysol) -  L_{\beta}^P(\xsol,A\xsol,\ysol)$ as the following:
\begin{equation}
\begin{aligned} \label{eq: L_D_xs_linear_infeas_form}
& L_{\beta}^P(x,u,\ysol) -  L_{\beta}^P(\xsol,A\xsol,\ysol)\\ 
  \overset{(a)}{=} &f(x) -f({\xsol}_0) +g(u) - g(A{\xsol}_0) +
 \inprd{\ysol}{Ax-u}+\frac{1}{2\beta} \twonorm{Ax-u}^2\\
  = & \underbrace{f(x) -f({\xsol}_0)- \inprd{-A^{\top} \ysol}{x-{\xsol}_0}}_{I}  +
  \underbrace{g(u) -g(A{\xsol}_0)- \inprd{\ysol}{u-A{\xsol}_0}}_{II} + \frac{1}{2\beta} \twonorm{Ax-u}^2 \\ 
\end{aligned}
\end{equation}
Here, in the step $(a)$, we use the equality $L_\beta^P(\xsol,A\xsol,\ysol) = L_\beta^P ({\xsol}_0,A{\xsol}_0,\ysol)$ due to the optimality of $\xsol$ and ${\xsol}_0$. Note that the three terms are $(I)$, $(II)$, and $\frac{1}{2\beta}\twonorm{Ax-u}^2$ are nonnegative, thanks to the KKT condition \eqref{eq: KKT_p}. Moreover, note that the terms $I$, $II$, and the last term correspond to the KKT conditions: $x\in \partial f^*(-A^{\top} \ysol)$, $u \in \partial g^*(\ysol)$, and $Ax =u$.   

\paragraph{Bounded linear regularity} Define the sets $C = \{(x,Ax)\mid x\in \mbfE\} \subset \mbfE \times \mbfF$, $D = \mbfE \times \partial g^*({\ysol}_0)$, and $E = 
\partial f^*(-A^{\top} {\ysol}_0)\times \mbfF$. 
Note that the intersection of $C$, $D$, and $E$ is the set of optimal primal solutions of \eqref{eq: p}. Thanks to strict complementarity, we know that 
\[
({\xsol}_0, A{\xsol}_0 ) \in C\cap \rel(D) \cap \rel(E)\not=\emptyset 
\]
Using the above relationship and $C$ is an affine space, we can activate bounded linear regularity \cite[Theorem 4.6]{bauschke1999strong}, which states that 
for any bounded set $\mathcal{B}$, there is an constant $c_1>0$ such that
$\dist((x,u),C\cap D\cap E) \leq c_1( \dist((x,u),C) + \dist((x,u),D)+\dist((x,u),E)$ for any $(x,u) \in \mathcal{B}$.
By recalling the definition of $C$, $D$, $E$ we have that for any compact set $B_2 \subset \mbfE\times \mbfF$, there is an $c_2>0$ such that 
\begin{equation}\label{eq: blr_dual_side}
\dist((x,u), \mathcal{P}_\star) \leq 
c_2 (\twonorm{Ax-u}+ \dist(x,\partial f^*(-A^{\top} {\ysol}_0)) + 
\dist(u,\partial g^*({\ysol}_0))).
\end{equation}

\paragraph{Linking via uniform QSV} Next, we link \eqref{eq: L_D_xs_linear_infeas_form} and \eqref{eq: blr_dual_side} via the 
uniform firm convexity established in Lemma \ref{lem: uniform_dist}. First, we know from Lemma \ref{lem: uniform_dist} and \eqref{eq: sc_close_opt_pair} that there is an $r_0$ such that for any compact set $\mathcal{B}\subset \mbfE \times \mbfF$, the following holds for some $\alpha_0>0$: for all $(x,u)\in \mathcal{B}$, we have 
\begin{subequations}\label{eq: f_g_qg}
\begin{align}
f(x) - f({\xsol}_0) -\inprd{-A^{\top} \ysol}{x-{\xsol}_0} & \geq \alpha_0 \dist^2(x,(\partial f)^{-1}(-A{\ysol}_0)) \\
g(u)- g(A{\xsol}_0) - \inprd{\ysol}{u-A{\xsol}_0}&\geq \alpha_0\dist^2(u,(\partial g)^{-1}({\ysol}_0)) 
\end{align}
\end{subequations}
Hence, for the term $(I)$ and $(II)$ in  \eqref{eq: L_D_xs_linear_infeas_form}, using \eqref{eq: f_g_qg}, we have 
\begin{equation}
    \begin{aligned}
  & L_{\beta}^P(x,u,\ysol) -  L_{\beta}^P(\xsol,A\xsol,\ysol)
  \geq 
  \alpha_0 (\dist^2(x,(\partial f)^{-1}(-A{\ysol}_0)) + \dist^2(u,(\partial g)^{-1}({\ysol}_0))) + \frac{1}{2\beta} \twonorm{Ax-u}^2.\\
    \end{aligned}
\end{equation}
Combining the above with \eqref{eq: blr_dual_side}, and utilize the fact that $(\partial f)^{-1} = \partial f^*$ and $(\partial g)^{-1} = \partial g^*$ as both functions are closed proper and convex, we see \eqref{eq: p_lag_Q} holds with some $\alpha_1>0$.
\end{proof}

\begin{rem}
A key property of Proposition \ref{prop: gen_uqg} is that 
$\alpha_1$ and $\alpha_2$ are independent of $(\xsol,\ysol)$.
Without such a requirement, we may pick a small enough radius $r$ of the ball $B_r({\zsol}_0)$ so that strict complementarity for any $(\xsol,\ysol)\in \mathcal{Z}_\star \cap B_r({\zsol})_0$. We may then repeat the above argument, replacing every $({\xsol}_0,{\ysol}_0)$ by $(\xsol,\ysol)$ to establish quadratic growth of $L^P_\beta$ and $L^D_\beta$ for this particular pair. Though $\alpha_1$ and $\alpha_2$ do depend on $(\xsol,\ysol)$ in this case.
\end{rem}
\begin{lem}\label{lem: inner_p_bound_diff_x_in_rel_face}
 Consider a closed convex proper function $f:\mbfE \rightarrow\bar{\mathbb{R}}$ and fix an $x\in \mbfE$ such that $\partial f(x)\not= \emptyset$. Suppose either $v_0\in \rel(\partial f(x)\not=\emptyset$ or $\partial f(x)$ is a polyhedral. Then there is an $r>0$ such that for any $v\in \partial f(x)\cap B_r(v_0)$, we have 
\begin{equation}
  \label{eq: inner_p_bound_diff_x_in_rel_face_general}
f(y) -f(x) - \inprd{v}{y-x} \geq \frac{1}{2} \left(
f(y) -f(x)  - \inprd{v_0}{y-x}\right) \quad \quad \text{for all}\;y \in \mbfE.
  \end{equation}
\end{lem}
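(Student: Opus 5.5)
The proof rests on the fact that the Bregman-type gap is affine in the subgradient, combined with a reflection trick inside $\partial f(x)$. For $w\in\partial f(x)$ I will write
\[
D_w(y):=f(y)-f(x)-\inprd{w}{y-x}.
\]
This is $\ge 0$ for every $y\in\mbfE$ by the subgradient inequality, and equals $+\infty$ when $y\notin\mathrm{dom} f$. In that case both sides of \eqref{eq: inner_p_bound_diff_x_in_rel_face_general} are $+\infty$, so I may restrict to $y\in\mathrm{dom} f$.

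Write $v=v_0+h$. Linearity in $w$ gives $D_v(y)=D_{v_0}(y)-\inprd{h}{y-x}$. The target inequality \eqref{eq: inner_p_bound_diff_x_in_rel_face_general} is therefore equivalent to
\[
\inprd{h}{y-x}\le \tfrac12 D_{v_0}(y)\quad\text{for all } y.
\]
To get this, I will reflect through $v_0$ with a factor $2$ and use $w:=v_0+2h$. If $w\in\partial f(x)$, then the subgradient inequality gives
\[
0\le D_w(y)=D_{v_0}(y)-2\inprd{h}{y-x},
\]
which is exactly the desired bound. So the whole lemma reduces to a purely geometric statement about the set $\partial f(x)$: there is an $r>0$ such that $v\in\partial f(x)\cap B_r(v_0)$ implies $v_0+2(v-v_0)\in\partial f(x)$.

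\emph{Relative interior case.} Since $v_0\in\rel(\partial f(x))$, there is $\rho>0$ with $B_\rho(v_0)\cap\operatorname{aff}(\partial f(x))\subset\partial f(x)$. I take $r=\rho/2$. For $v\in\partial f(x)\cap B_r(v_0)$, the increment $h=v-v_0$ is parallel to $\operatorname{aff}(\partial f(x))$. Hence $v_0+2h$ lies in $\operatorname{aff}(\partial f(x))$ and is within distance $2r=\rho$ of $v_0$, so it belongs to $\partial f(x)$.

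\emph{Polyhedral case.} Here $v_0\in\partial f(x)$ is arbitrary. I will use the standard local structure of a polyhedron $Q$: there is $\rho>0$ with $Q\cap B_\rho(v_0)=(v_0+T)\cap B_\rho(v_0)$, where $T$ is the tangent cone of $Q$ at $v_0$. Concretely, only constraints active at $v_0$ matter near $v_0$. Taking $r=\rho/2$, if $v=v_0+h\in Q\cap B_r(v_0)$ then $h\in T$. Since $T$ is a cone, $2h\in T$, and $\|2h\|\le\rho$, so $v_0+2h\in Q$.

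The only step requiring care is this geometric one, and specifically the polyhedral local-cone identity. I would justify it directly from a finite description $Q=\{w:\inprd{a_i}{w}\le b_i\}$: for inactive constraints, choose $\rho$ smaller than their slack divided by $\|a_i\|$; for active constraints, the conditions are homogeneous in $h$. The remaining algebra is immediate.
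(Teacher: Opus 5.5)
Your proposal is correct and follows the paper's proof essentially step for step. Both reduce the target inequality to the membership $2v-v_0=v_0+2(v-v_0)\in\partial f(x)$, prove it with the affine-hull argument in the relative-interior case, and prove it with the active/inactive-constraint analysis (your tangent-cone phrasing) in the polyhedral case.
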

\begin{proof}
For any $v\in \partial f(x)$, we know the inequality \eqref{eq: inner_p_bound_diff_x_in_rel_face_general} is equivalent to 
\begin{equation}\label{eq: equivalent_inequality_relating_subgradient_inequality}
    f(y) -f(x)- \inprd{2v-v_0}{y-x} \geq 0\quad \quad \text{for all}\;y \in \mbfE.
\end{equation}
Hence, it is enough to show \eqref{eq: inner_p_bound_diff_x_in_rel_face_general} if we can find an $r>0$ such that for any $v\in \partial f(x)\cap B(v_0,r)$, we have 
$2v-v_0 \in \partial f(x)$.  We assume $\partial f(x)$ contains multiple points in the following, as $2v-v_0 \partial f(x)$ is immediate if $\partial f(x)$ is a singleton.

Consider the first case $v_0\in \rel(\partial f(x))$. Then there is an $r_0>0$ such that $\partial f(x) \cap B(x,r_0)\subset \partial f(x)$. Take $r= \frac{r_0}{4}$ and consider  any $v\in \partial f(x)\cap B(v_0,r)$. Because $2v-v_0 \in \aff(\partial f(x))$, as $2v-v_0$ is an affine combination, and $\twonorm{2v-v_0 - v_0}=2\twonorm{v-v_0}\leq \frac{r_0}{2}$, we know 
$2v-v_0 \in \partial f(x)$. 

Next, consider the second case that $\partial f(x)$ is a polyhedral. It is straightforward to show that such a radius $r$ exists by considering the active and inactive inequalities of the polyhedral representation. Indeed, suppose $\partial f(x) = \{y \in \mbfE\mid By\geq d\}$ for some matrix $B$ and vector $d$. Let $I_> = \{i \mid (Bv_0)_i>b_i\}$, $I_= =\{i \mid (Bv_0)_i=b_i\}$, and $\delta = \min_{i\in I_>} (Bv_0)_i -b_i>0$. Then we see by choosing a radius $r$ small enough, we will have that for any $v\in \partial f(x) \cap B(v_0,r)$, $(Bv)_i\geq b+\frac{\delta}{2}$ for all $i\in I_>$, $(Bv)_i\geq b_i$ for all $i\in I_=$, and $|(B(v-v_0))_i|\leq \frac{\delta}{4}$ for all $i$. Then  for any $i\in I_>$, we have
$(B(2v-v_0))_i = (Bv)_i + (B(v-v_0))_i\geq b + \frac{\delta}{2}-\frac{\delta}{4} \geq b$. And for any $i\in I_=$, we have $(B(2v-v_0))_i = 
2(Bv)_i-(Bv_0)_i = 2(Bv)_i - b_i \geq 2b_i-b_i =b_i$. Our proof is complete.
\end{proof}

\begin{lem}[Uniform QSV]\label{lem: uniform_dist}
    Instate the assumptions in Lemma \ref{lem: inner_p_bound_diff_x_in_rel_face}. Further suppose $f$ is firmly convex relative to $v_0$. Then, 
    there is an $r>0$ such that, for any compact set $\mathcal{B}$, there is an $\alpha_0>0$ with the property that for all $v \in \partial f(x) \cap B(v_0,r)$, we have 
\begin{equation}
  \label{eq: inner_p_bound_diff_x_in_rel_face_strengthened_general}
f(y) -f(x) - \inprd{v}{y-x} \geq \frac{\alpha_0}{2} \dist^2(y,(\partial f)^{-1}(v_0))  \quad \quad \text{for all}\;y \in \mathcal{B}.
  \end{equation}
\end{lem}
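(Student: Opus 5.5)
The plan is to combine Lemma \ref{lem: inner_p_bound_diff_x_in_rel_face} with the quadratic subdifferential violation of $f$ at $x$ relative to $v_0$. Here I read ``firmly convex relative to $v_0$'' as the QSV inequality \eqref{eq: fc_inequality} with $v=v_0$. The radius $r$ is the one supplied by Lemma \ref{lem: inner_p_bound_diff_x_in_rel_face}. It depends only on $f$, $x$ and $v_0$, and in particular not on the compact set $\mathcal{B}$, which is exactly the order of quantifiers the statement requires.

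Fix a compact set $\mathcal{B}\subset\mbfE$. By QSV relative to $v_0$ there is an $\alpha>0$, depending only on $\mathcal{B}$, such that
\begin{equation*}
f(y)-f(x)-\inprd{v_0}{y-x}\geq \frac{\alpha}{2}\dist^2\bigl(y,(\partial f)^{-1}(v_0)\bigr)\quad\text{for all } y\in\mathcal{B}.
\end{equation*}
Now take any $v\in\partial f(x)\cap B(v_0,r)$. By Lemma \ref{lem: inner_p_bound_diff_x_in_rel_face}, for every $y\in\mbfE$, and hence for every $y\in\mathcal{B}$,
\begin{equation*}
f(y)-f(x)-\inprd{v}{y-x}\geq \tfrac12\bigl(f(y)-f(x)-\inprd{v_0}{y-x}\bigr).
\end{equation*}
Chaining the two displays gives \eqref{eq: inner_p_bound_diff_x_in_rel_face_strengthened_general} with $\alpha_0=\alpha/2$. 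This constant depends only on $\mathcal{B}$ and not on the particular $v$, so the bound is uniform as claimed. For $y\notin\operatorname{dom} f$ both sides of the QSV inequality are handled by the convention $+\infty\geq$ anything, so no separate case is needed.

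There is no genuine obstacle once Lemma \ref{lem: inner_p_bound_diff_x_in_rel_face} is available. The uniformity that Remark \ref{rem: qfv_issue} warned about is already absorbed by the factor $\tfrac12$: that lemma converts the linearization at any nearby subgradient $v$ into half of the linearization at the fixed reference subgradient $v_0$. The only points to check are bookkeeping. First, the right-hand side is measured against the fixed set $(\partial f)^{-1}(v_0)$ rather than $(\partial f)^{-1}(v)$, and this is what the statement asserts. Second, the polyhedral alternative in Lemma \ref{lem: inner_p_bound_diff_x_in_rel_face} is covered by the same argument, because only the conclusion of that lemma is used.
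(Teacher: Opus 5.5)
Your proposal is correct and matches the paper's proof: apply QSV relative to $v_0$ on $\mathcal{B}$, then use Lemma \ref{lem: inner_p_bound_diff_x_in_rel_face} to bound the linearization gap at $v$ below by half the gap at $v_0$, which gives $\alpha_0=\alpha/2$. As in the paper, the radius $r$ comes from that lemma and does not depend on $\mathcal{B}$.
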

\begin{proof}
    Since $\partial f(x)$ satisfies QSV relative to $v_0$, we know that for any compact set $\mathcal{B}$, there is an $\alpha>0$ such that for all $y\in \mathcal{B}$ 
    \begin{equation}
        f(y) -f(x) - \inprd{v_0}{y-x} \geq \frac{\alpha}{2} \dist^2 (y, (\partial f)^{-1}(v_0)). 
    \end{equation}
    Using Lemma \ref{lem: inner_p_bound_diff_x_in_rel_face}, we know that 
    \eqref{eq: inner_p_bound_diff_x_in_rel_face_strengthened_general} holds with $\alpha_0 = \frac{\alpha}{2}$.
\end{proof}

\subsection{Equivalent conditions, local linear convergence, and acceleration}\label{sec: gen_ec_ll_a}
In this section, we discuss how our results in Section \ref{sec:equiv} and \ref{sec:linear} regarding equivalent conditions for local linear convergence, local linear convergence under them, and acceleration, generalize to the convex composite setting. 

\paragraph{Equivalence conditions, and local linear convergence and acceleration under them} Thanks to our notation and arguments in Section \ref{sec:equiv} and \ref{sec:linear}, it is evidently that the definitions, propositions, theorems, and algorithms do not rely on the particular conic structure and they generalize seamlessly to the convex composite setting without any modification. In particular, the equivalence result on the conditions for local linear convergence, Theorem \ref{thm:equivalence}, the local linear convergence under them, Proposition \ref{prop:local}, and the acceleration, Proposition \ref{prop:local-restart}, continues to hold for \eqref{eq: P_gen} without any change in their statements and proofs.

\paragraph{Implications of strict complementarity and QSV} It is also evident that under strict complementarity and the quadratic subdifferential violation, the three equivalent conditions continue to hold, as well as the local linear convergence and acceleration. We record these results here. The proof follows the same line the proof of Corollary \ref{cor: sc-ms}, Theorem \ref{thm: local-linear}, and Theorem \ref{thm: local-linear-restart}. 

\begin{restatable}{thm}{local}\label{thm: uqg_consequence}
Instate Assumption \ref{ass:full-rank} and the assumption of Proposition \ref{prop: gen_uqg}. Then the following three statements hold:
\begin{enumerate}
\item Equivalent conditions hold: the three conditions described in Theorem \ref{thm:equivalence}  for \eqref{eq: P_gen} and \eqref{eq: D_gen}  hold.
\item Local linear convergence: Consider $z^k=(x^k,y^k)$ from either ADMM or PDHG. Then there are $r,\tilde{r}>0$ and $q\in (0,1)$, such that for any $z^0= (x^0,y^0) \in B_r(z_{\star 0})$, the following inequality is satisfied 
  \begin{equation*}
    \mathrm{dist}_2(z^k,\zsset) \leq q^k \tilde{r}.
\end{equation*}
    \item Acceleration:
consider Algorithm \ref{alg: iteration-restart}. Then there are $r>0$ and $q\in (0,1)$, such that for any $z^{0,0}= (x^{0,0},y^{0,0}) \in B_r(z_{\star 0})$, the following inequality is satisfied 
 \begin{equation*}
            \operatorname{dist}_2(z^{n,0},\zsset) \leq \pran{\frac{1}{e}}^n \operatorname{dist}_2(z^{0,0},\zsset) \ .
    \end{equation*}
\end{enumerate}
\end{restatable}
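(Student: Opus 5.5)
The plan mirrors the conic chain of Figure~\ref{fig:proof}, with Proposition~\ref{prop: gen_uqg} taking the place of Proposition~\ref{prop:uqg}.

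\textbf{Item 1.} Under strict complementarity and QSV, Proposition~\ref{prop: gen_uqg} gives, for a small radius $r$ and any compact set, uniform quadratic growth of the full augmented Lagrangians $L_\beta^P(x,u,\ysol)$ and $L_\beta^D(y,v,\xsol)$. The constants are uniform over $\zsol\in\zsset\cap B_r({\zsol}_0)$.

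We then pass to the reduced augmented Lagrangians by minimizing over $u$ (respectively $v$). Take the compact set $\mathcal{B}$ to be a ball large enough to contain every minimizer $u(x)$ for $x\in B_r({\zsol}_0)$. Such a ball exists because the minimizer is a proximal point $u(x)=\prox_{\beta g}(Ax+\beta\ysol)$, which depends continuously on $(x,\ysol)$, and these arguments range over a bounded set. This yields
\[
L_\beta^P(x,\ysol)-L_\beta^P(\xsol,\ysol)=L_\beta^P(x,u(x),\ysol)-L_\beta^P(\xsol,A\xsol,\ysol)\ge \alpha_1\dist^2((x,u(x)),\mathcal P_\star)\ge \alpha_1\dist^2(x,\xsset).
\]
Here we use that $L_\beta^P(\xsol,\ysol)=L_\beta^P(\xsol,A\xsol,\ysol)$ at a saddle point, and that the projection onto the first coordinate is nonexpansive. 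The dual side is handled the same way. This is condition (i) of Theorem~\ref{thm:equivalence}. Since that theorem and its proof (Proposition~\ref{prop: gap-computation}) use only $f$, $g$, their conjugates and Fenchel duality, it applies verbatim and gives (ii) and (iii), i.e.\ metric subregularity on some ball $B_{\tilde r}({\zsol}_0)$.

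\textbf{Items 2 and 3.} Both follow from metric subregularity exactly as in the conic case. PDHG and ADMM for \eqref{eq: P_gen} have the form \eqref{eq: iteration} with the same matrices $P$, since the derivation of \cite{lu2023unified} uses only prox-operators of $f$, $g^*$. Proposition~\ref{prop:local} (with Assumption~\ref{ass:full-rank}) and Proposition~\ref{prop:local-restart} supply the local initialization Assumptions~\ref{ass:local} and~\ref{ass:local-restart}. We shrink $r$ so that $\tilde r=Cr$ lies inside the region where metric subregularity holds. Proposition~\ref{thm:linear-last} then gives the rate with $q=\exp(-1/\lceil e^2\lambdamax^2(P)/\gamma^2\rceil)$ and the stated $\tilde r$. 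Proposition~\ref{thm:linear-last-restart}, with $k^*\ge\lceil 2e\lambdamax(P)/\gamma\rceil$, gives the $(1/e)^n$ rate.

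\textbf{Main obstacle.} The delicate step is the reduction from the full to the reduced augmented Lagrangians in Item~1. Proposition~\ref{prop: gen_uqg} is stated only on compact sets, so the minimizing $u$ and $v$ must stay bounded uniformly in the optimal pair near ${\zsol}_0$. I would try to obtain this from the nonexpansiveness of the proximal maps: $u(x)=\prox_{\beta g}(Ax+\beta\ysol)$ satisfies $\|u(x)-A\xsol\|\le\|A\|\|x-\xsol\|+\beta\|\ysol-\ysol\|$ relative to the fixed point $A\xsol=\prox_{\beta g}(A\xsol+\beta\ysol)$. This bounds $u(x)$ uniformly, and the analogous bound controls $v$. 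A secondary check is that the propositions ensuring Assumption~\ref{ass:local} carry over to \eqref{eq: P_gen}, including that the limit is strictly complementary in the general sense. That requires the relative-interior inclusions of Definition~\ref{def: sc_general} to be stable near ${\zsol}_0$, as established in \eqref{eq: sc_close_opt_pair}.
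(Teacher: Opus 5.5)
Your proposal is correct and follows the paper's route. The paper's proof says only that one repeats the chain behind Corollary \ref{cor: sc-ms} and Theorems \ref{thm: local-linear} and \ref{thm: local-linear-restart}: Proposition \ref{prop: gen_uqg} $\Rightarrow$ Theorem \ref{thm:equivalence} $\Rightarrow$ Propositions \ref{prop:local}, \ref{thm:linear-last}, \ref{prop:local-restart}, \ref{thm:linear-last-restart}. Your explicit passage from the full to the reduced augmented Lagrangians is sound and fills in a step the paper leaves implicit: you bound the minimizer $u(x)=\prox_{\beta g}(Ax+\beta\ysol)$ by nonexpansiveness around the fixed point $A\xsol$, and use $\dist((x,u),\mathcal P_\star)\ge\dist(x,\xsset)$.
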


\section{Conclusions}\label{sec: conclusion}
In this paper, we consider the conic programs \eqref{eq: p} and its convex composite optimization generalization \eqref{eq: P_gen}. As demonstrated in Figure \ref{fig:proof}, we establish a unified framework for showing local linear convergence for operator-splitting type methods \eqref{eq: iteration}, including PDHG and ADMM. Our framework consists of three components: (1) strict complementairty and quadratic facial/subdifferential violation implies uniform quadratic growth of augmented Lagrangian, (2) equivalence of three conditions for local linear convergence of \eqref{eq: iteration}, with one being the uniform quadratic growth of augmented Lagrangian,  (3) a unified argument for the local linear convergence of \eqref{eq: iteration} under one of the three equivalent conditions. 

\section*{Acknowledgments}
Haihao Lu is partially supported by AFOSR Grant No. FA9550-24-1-0051, ONR Grant No.
N000142412735 and a Sloan Research Fellowship. Jinwen
Yang is partially supported by AFOSR Grant No. FA9550-24-1-0051.

\bibliographystyle{amsplain}
\bibliography{ref-papers}

\appendix
\appendix
\section{Proof of Proposition \ref{prop:local} and \ref{prop:local-restart}}\label{app:prove-local}
\begin{proof}[Proof of Proposition \ref{prop:local}]
(i) We consider PDHG and ADMM, respectively.

\textbf{PDHG.}
For stepsize \(\eta<1/\|A\|_2\), we have \(P\succ 0\), and hence $\lambda_{\min}(P)\|u\|_2^2
    \leq \|u\|_P^2
    \leq \lambda_{\max}(P)\|u\|_2^2$.
By the nonexpansiveness of PDHG iterates in the \(P\)-norm
\cite{chambolle2016ergodic}, for any \(\zsol\in\zsset\) and any
\(k\geq 0\), $\|z^k-\zsol\|_P
    \leq
    \|z^0-\zsol\|_P $.
Therefore,
\[
\begin{aligned}
    \|z^k-\zsol\|_2
    &\leq
    \frac{1}{\sqrt{\lambda_{\min}(P)}}\|z^k-\zsol\|_P \leq
    \frac{1}{\sqrt{\lambda_{\min}(P)}}\|z^0-\zsol\|_P \leq
    \sqrt{\frac{\lambda_{\max}(P)}{\lambda_{\min}(P)}}
    \|z^0-\zsol\|_2 .
\end{aligned}
\]
Thus, letting $C_{\rm PDHG}:=
    \sqrt{\frac{\lambda_{\max}(P)}{\lambda_{\min}(P)}}$, we obtain
\[
    \|z^k-\zsol\|_2
    \leq
    C_{\rm PDHG}\|z^0-\zsol\|_2,
    \qquad \forall k\geq 0 .
\]

\textbf{ADMM.}
Note that \(A\) has full row rank, so that \(AA^{\top}\) is invertible.
By the Douglas-Rachford interpretation of ADMM
\cite[Proposition~19]{eckstein2015understanding}, the sequence $h^k:=x^k+\rho s^k$ is generated by a nonexpansive fixed-point map. Moreover, for any primal-dual optimal solution \((\xsol,\ssol,\ysol)\), the point $h^*:=\xsol+\rho\ssol$ is a fixed point, and the associated recovery map \(x^k=T(h^k)\) is nonexpansive with \(T(h^*)=\xsol\). Hence, for any \(k\geq 1\),
\[
    \|h^k-h^*\|_2\leq \|h^1-h^*\|_2,
    \qquad
    \|x^k-\xsol\|_2
    \leq
    \|h^k-h^*\|_2
    \leq
    \|h^1-h^*\|_2 .
\]
Since \(h^k=x^k+\rho s^k\), we have
\begin{equation}\label{eq:admm-s-compact}
    \|s^k-\ssol\|_2
    =
    \frac{1}{\rho}
    \|(h^k-h^*)-(x^k-\xsol)\|_2
    \leq
    \frac{2}{\rho}\|h^1-h^*\|_2 .
\end{equation}
Furthermore, using the update formula $y^k
    =
    (AA^{\top})^{-1}
    \left[
        \frac{1}{\rho}b
        -
        A\left(\frac{1}{\rho}x^k+s^k-c\right)
    \right]$,
and the corresponding identity at the solution \((\xsol,\ssol,\ysol)\), we get
\[
\begin{aligned}
    \|y^k-\ysol\|_2
    &\leq
    \|(AA^{\top})^{-1}A\|_2
    \left(
        \frac{1}{\rho}\|x^k-\xsol\|_2
        +
        \|s^k-\ssol\|_2
    \right)  \leq
    \frac{3\|(AA^{\top})^{-1}A\|_2}{\rho}
    \|h^1-h^*\|_2 .
\end{aligned}
\]
Therefore, recalling \(z^k=(x^k,y^k)\) and \(\zsol=(\xsol,\ysol)\), for all
\(k\geq 1\),
\begin{equation}\label{eq:zk-h1-compact}
    \|z^k-\zsol\|_2
    \leq
    \sqrt{
        1+
        \frac{9\|(AA^{\top})^{-1}A\|_2^2}{\rho^2}
    }
    \|h^1-h^*\|_2 .
\end{equation}

It remains to bound \(\|h^1-h^*\|_2\). By the \(s\)-update and the
nonexpansiveness of projection,
\[
\begin{aligned}
    \|s^1-\ssol\|_2
    &\leq
    \|A\|_2\|y^0-\ysol\|_2
    +
    \frac{1}{\rho}\|x^0-\xsol\|_2                                      \leq
    \frac{1+\rho\|A\|_2}{\rho}\|z^0-\zsol\|_2 .
\end{aligned}
\]
Together with the \(x\)-update, this gives
\[
\begin{aligned}
    \|x^1-\xsol\|_2
    &\leq
    \|x^0-\xsol\|_2
    +
    \rho\|A\|_2\|y^0-\ysol\|_2
    +
    \rho\|s^1-\ssol\|_2                                      \leq
    2(1+\rho\|A\|_2)\|z^0-\zsol\|_2 .
\end{aligned}
\]
Hence,
\begin{equation}\label{eq:h1-z0-compact}
\begin{aligned}
    \|h^1-h^*\|_2
    &\leq
    \|x^1-\xsol\|_2+\rho\|s^1-\ssol\|_2        \leq
    3(1+\rho\|A\|_2)\|z^0-\zsol\|_2 .
\end{aligned}
\end{equation}
Combining \eqref{eq:zk-h1-compact} and \eqref{eq:h1-z0-compact}, we obtain
\[
    \|z^k-\zsol\|_2
    \leq
    3(1+\rho\|A\|_2)
    \sqrt{
        1+
        \frac{9\|(AA^{\top})^{-1}A\|_2^2}{\rho^2}
    }
    \|z^0-\zsol\|_2 ,
    \qquad \forall k\geq 1 .
\]
Since the above constant is at least one, the same bound also holds for
\(k=0\). Thus, with
\[
    C_{\rm ADMM}
    :=
    3(1+\rho\|A\|_2)
    \sqrt{
        1+
        \frac{9\|(AA^{\top})^{-1}A\|_2^2}{\rho^2}
    },
\]
we have
\[
    \|z^k-\zsol\|_2
    \leq
    C_{\rm ADMM}\|z^0-\zsol\|_2,
    \qquad \forall k\geq 0 .
\]

(ii) We have proven that for any $r>0$, if $\|z^0-{\zsol}_0\|_2\leq r$ then it holds for any $k\geq 0$ that $\|z^k-{\zsol}_0\|_2\leq Cr$ for some constant $C>0$. Note that optimal solutions near strictly complementary solution ${\zsol}_0$ satisfy strict complementarity as well, thus (ii) holds for  sufficiently small radius $r>0$ and thus $Cr$ is small enough.
\end{proof}

\begin{proof}[Proof of Proposition \ref{prop:local-restart}]
    In the following we prove respectively for PDHG and ADMM.

    \textbf{PDHG.} Suppose $\|z^{n',k'}-{\zsol}\|_P\leq \|z^{n',0}-{\zsol}\|_P\leq \|z^{0,0}-{\zsol}\|_P$ for any $n'\leq n-1$ and $k'\leq k^*$, and $n'=n$ with $k'\leq k$. Then consider $z^{n,k+1}$ and we have
    \begin{equation*}
    \begin{aligned}
        \|z^{n,k+1}-{\zsol}\|_P&\leq {\frac{k+1}{k+2}\|z^{n,k}-{\zsol}\|_P+\frac{1}{k+2}\|z^{n,0}-{\zsol}\|_P}\leq\|z^{n,0}-{\zsol}\|_P\leq \|z^{0,0}-{\zsol}\|_P \ . 
    \end{aligned}
\end{equation*}
Thus by induction we have for any $n$ and any $0\leq k\leq k^*$, we have $\|z^{n,k}-{\zsol}\|_P\leq \|z^{0,0}-{\zsol}\|_P$, and therefore, let $C_{\rm PDHG}:=\sqrt{\frac{\lambdamax(P)}{\lambdamin(P)}}$, it holds that
\begin{equation*}
    \|z^{n,k}-{\zsol}\|_2\leq \sqrt{\frac{1}{\lambdamin(P)}}\|z^{n,k}-{\zsol}\|_P\leq \sqrt{\frac{1}{\lambdamin(P)}}\|z^{0,0}-{\zsol}\|_P\leq \sqrt{\frac{\lambdamax(P)}{\lambdamin(P)}}\|z^{0,0}-{\zsol}\|_2=C_{\rm PDHG}\|z^{0,0}-{\zsol}\|_2 \ .
\end{equation*}
The rest of the proof follows exactly from that of Proposition \ref{prop:local} for PDHG.

\textbf{ADMM.} Note that from~\cite[Propositon 19]{eckstein2015understanding}, $\tilde h^{n,k+1}:=\tilde x^{n,k+1}+\rho \tilde s^{n,k+1}$ is one iteration of Douglas-Rachford splitting from $h^{n,k}:=x^{n,k}+\rho s^{n,k}$, while we also have $h^{n,k+1}=\frac{k+1}{k+2}\tilde h^{n,k+1}+\frac{1}{k+2}h^{n,0}$. For any optimal solution $\xsol$ to primal problem \eqref{eq: p} and optimal solution $(\ssol,\ysol)$ to dual problem \eqref{eq: d}, denote $h^*:=\xsol+\rho \ssol$. Suppose $\|h^{n',k'}-h^*\|_2\leq \|h^{n',0}-h^*\|_2\leq \|h^{0,0}-h^*\|_2$ for any $n'\leq n-1$ and $k'\leq k^*$, and $n'=n$ with $k'\leq k$. Consider iterates $h^{n,k+1}$ and $\tilde h^{n,k+1}$, and we have
    \begin{equation*}
        \begin{aligned}
            \|h^{n,k+1}-h^*\|_2&= \left\|\frac{k+1}{k+2}\tilde h^{n,k+1}+\frac{1}{k+2}h^{n,0}-h^*\right\|_2\leq \frac{k+1}{k+2}\|\tilde h^{n,k+1}-h^*\|_2+\frac{1}{k+2}\|h^{n,0}-h^*\|_2\\
            &\leq \frac{k+1}{k+2}\|h^{n,k}-h^*\|_2+\frac{1}{k+2}\|h^{n,0}-h^*\|_2\leq \|h^{n,0}-h^*\|_2 \ ,
        \end{aligned}
    \end{equation*}
    where the second inequality uses the non-expansiveness of DRS iteration, and the last one follows from induction assumption.
    Thus by induction, for all $h^{n,k}$ we have $\|h^{n,k}-h^*\|_2\leq \|h^{n,0}-h^*\|_2\leq \cdots \leq \|h^{0,0}-h^*\|_2$, together with
    \begin{equation*}
        \|\tilde h^{n,k+1}-h^*\|_2\leq \|h^{n,k+1}-h^*\|_2\leq \|h^{0,0}-h^*\|_2 \ ,
    \end{equation*}
    where the first inequality again uses the non-expansiveness of DRS iteration.
    
    We finish the proof by following the same argument of Proposition \ref{prop:local} for ADMM.
\end{proof}

\end{document}